\def\polhk#1{\setbox0=\hbox{#1}{\ooalign{\hidewidth
  \lower1.5ex\hbox{`}\hidewidth\crcr\unhbox0}}} 
\theoremstyle{definition}
\newtheorem{theorem}{Theorem}[section]
\newtheorem{example}[theorem]{Example}
\newtheorem{proposition}[theorem]{Proposition}
\newtheorem{lemma}[theorem]{Lemma}
\newtheorem{definition}[theorem]{Definition}
\newtheorem{remark}[theorem]{Remark}
\numberwithin{equation}{section}
\def\cB{\mathcal{B}}
\def\cD{\mathcal{D}}
\def\cF{\mathcal{F}}
\def\cL{\mathcal{L}}
\def\cN{\mathcal{N}}
\def\bD{\mathbb{D}}
\def\bR{\mathbb{R}}
\def\e{\varepsilon}
\def\E{\mathbb{E}}
\newcommand{\DRL}[2]{{}_t D_{#1}^{#2}}
\newcommand{\lMr}[3]{\:{}_{#1} #2_{#3}}
\newcommand{\Norm}[1]{\left|\left|  #1   \right|\right|}
\newcommand*{\one}{{{\rm 1\mkern-1.5mu}\!{\rm I}}}
\DeclareMathOperator*{\esssup}{ess\,sup}
\newcommand{\R}{\mathbb{R}}
\newcommand{\W}{\dot{W}}
\newcommand{\ud}{\ensuremath{ \mathrm{d}} }
\newcommand{\Ceil}[1]{\left\lceil #1 \right\rceil}
\newcommand{\Floor}[1]{\left\lfloor #1 \right\rfloor}
\newcommand{\FoxH}[5]{H_{#2}^{#1}\left(#3\:\middle\vert\: \begin{subarray}{l}#4\\[0.4em] #5\end{subarray}\right)}
\begin{document}

\title[Moments and asymptotics for a class of SPDEs]{Moments and asymptotics for a class of SPDEs with space-time white noise}

\author[L. Chen]{Le Chen}
\address{L. Chen: Department of Mathematics and Statistics,
  Auburn University, Auburn, USA.}
  \email{\url{le.chen@auburn.edu}}

\author[Y. Guo]{Yuhui Guo}
\address{Y. Guo:
 School of Mathematics, Shandong University, Jinan, China.}
 \email{\url{guoyuhui@mail.sdu.edu.cn}}

\author[J. Song]{Jian Song}
\address{J. Song: Research Center for Mathematics and Interdisciplinary Sciences, Shandong University, Qingdao, China;
 School of Mathematics, Shandong University, Jinan, China.}
 \email{\url{txjsong@sdu.edu.cn}}

\subjclass[2010]{ Primary 60H15, Secondary 60G60, ~26A33, ~37H15, ~60H07}

\keywords{Stochastic partial differential equation,
stochastic heat/wave equation,
space-time white noise,
Dalang's condition,
moment asymptotics,
intermittency,
moment Lyapunov exponent}


\date{}
\begin{abstract}
  %
  %
  In this article, we consider the nonlinear stochastic partial differential equation of fractional order in both space
  and time variables with constant initial condition:
  \begin{equation*}
    \left(\partial^{\beta}_t+\dfrac{\nu}{2}\left(-\Delta\right)^{\alpha / 2}\right) u(t, x)= \: I_{t}^{\gamma}\left[\lambda u(t, x) \dot{W}(t, x)\right]
    \quad t>0,\: x\in\R^d,
  \end{equation*}
  where $\dot{W}$ is space–time white noise, $\alpha>0$, $\beta\in(0,2]$, $\gamma \ge 0$, $\lambda\neq0$ and $\nu>0$.
  The existence and uniqueness of solution in the It\^o-Skorohod sense is obtained under Dalang’s condition. We obtain
  explicit formulas for both the second moment and the second moment Lyapunov exponent. We derive the $p$-th moment  upper bounds and find the matching lower bounds. Our results solve a large class of conjectures regarding the order of
  the $p$-th moment Lyapunov exponents. In particular, by letting $\beta=2$, $\alpha=2$, $\gamma=0$, and $d=1$, we
  confirm the following standing conjecture for the stochastic wave equation:
  \begin{align*}
    t^{-1}\log\E[u(t,x)^p] \asymp p^{3/2}, \quad \text{for $p\ge 2$ as $t\to \infty$.}
  \end{align*}
  The method for the lower bounds is inspired by a recent work by Hu and Wang \cite{hu.wang:21:intermittency}, where the
  authors focus on the space-time colored Gaussian noise.
\end{abstract}

\maketitle
\tableofcontents
\section{Introduction}

Let $\W$ be a {\it space-time white noise}, namely, a centered Gaussian noise with covariance
\begin{align} \label{E:NoiseWW}
  \E[\dot W(t, x) \dot W(s,y) ]=\delta(t-s) \delta(x-y),
\end{align}
where $\delta(\cdot)$ is the Dirac delta function. The following {\it stochastic heat equation}
\begin{equation}\label{E:SHE}
  \text{(SHE)} \quad
  \begin{cases}
    \left(\dfrac{\partial}{\partial t}-\dfrac{\nu}{2}\dfrac{\partial^2}{\partial x^2}\right) u(t, x)= \: \lambda u(t, x) \dot{W}(t, x) , & t>0, x \in \mathbb{R}, \\
    u(0,\cdot)=u_0,                                                                                            &
  \end{cases}
\end{equation}
and {\it stochastic wave equation}
\begin{equation}\label{E:SWE}
  \text{(SWE)} \quad
  \begin{cases}
    \left(\dfrac{\partial^2}{\partial t^2}-\dfrac{\nu}{2}\dfrac{\partial^2}{\partial x^2}\right) u(t, x)= \: \lambda u(t, x) \dot{W}(t, x) , & t>0, x \in \mathbb{R}, \\
    u(0,\cdot)=u_0, \quad \dfrac{\partial}{\partial t} u(0, \cdot)=u_1,
  \end{cases}
\end{equation}
with $\lambda\ne 0$, $\nu>0$, $u_0,u_1\in\R$ are two canonical stochastic partial differential equations. SHE
\eqref{E:SHE} has been widely and extensively studied with many fine properties, among which the probability moments
enjoy the following explicit asymptotics:
\begin{subequations} \label{E:Karkar}
  \begin{align} \label{E:Kardar-p}
    \lim_{p \to \infty} p^{-3}   & \log\E\left[ u(t,x)^p \right] = \frac{\lambda^4}{24},          &  & \text{for all $t>0$ and $x\in\R$ and}, \\
      \lim_{t \to \infty} t^{-1} & \log\E\left[ u(t,x)^p \right] = \frac{1}{24}p(p^2-1)\lambda^4, &  & \text{for all $p\ge 2$ and $x\in\R$}; \label{E:Kardar-t}
  \end{align}
\end{subequations}
see \cite{chen:15:precise} and references therein. One major tool in studying this parabolic
equation is the {\it Feynman-Kac representation} of the moments as being used in {\it ibid}. Note
that the quantity on the right-hand side of \eqref{E:Kardar-t} is called the {\it $p$-th moment
Lyapunov exponent}, which characterizes the intermittency property of the solution; see
\cite{carmona.molchanov:94:parabolic}.

In contrast, much less is known for the hyperbolic counterpart --- SWE \eqref{E:SWE}. The lack of
Feynman-Kac representation for the moments poses a major difficulty in this study. To our best
knowledge, only the following upper bound of the $p$-th moment Lyapunov exponent is known: for some
constant $C>0$,
\begin{align} \label{E:SWE-Ly}
  \limsup_{t \to \infty} t^{-1} \log\E\left[ u(t,x)^p \right]
  \le C p^{3/2}\quad \text{for all $p\ge 2$ and $x\in\R$};
\end{align}
see, e.g., \cite{chen.dalang:15:moment}. It has been long conjectured that the exponent $3/2$ in
\eqref{E:SWE-Ly} is sharp; yet there lacks of a rigorous proof. Dalang and Mueller
\cite{dalang.mueller:09:intermittency} studied the three-dimensional SWE with a Gaussian noise that
is white-in-time and colored-in-space with a bounded correlation function, namely,
\begin{align} \label{E:NoiseWC}
  \E\left[\dot W(t, x) \dot W(s,y) \right]=\delta(t-s) f(x-y),
\end{align}
where $f$ is a nonnegative, nonnegative definite and bounded function. Using an earlier developed
Feynman-Kac-type formula for moments in \cite{dalang.mueller.ea:08:feynman-kac-type}, they
established the following large-time asymptotics:
\begin{align} \label{E:DM-t}
      C_1 p^{4/3}
  \le \liminf_{t\to\infty} \frac{\log\E\left[|u(t,x)|^p\right]}{t}
  \le \limsup_{t\to\infty} \frac{\log\E\left[|u(t,x)|^p\right]}{t}
  \le C_2 p^{4/3},
\end{align}
for all $p\ge 2$ and  $x\in\R^3$; see Theorem 1.1 ({\it ibid.}). To obtain the lower bound in
\eqref{E:DM-t},  their arguments crucially depend on the property that one can find a small
indicator function below $f(x)$ near the origin, i.e.,  $c 1_{\{|x|\le r\} } \le f(x)$ for all
$x\in\R^3$. This requirement prevents the application to the space-time white noise case. Recently,
Hu and Wang \cite{hu.wang:21:intermittency} obtained the matching lower and upper $p$-th moment
Lyapunov exponents for a wide range of SPDEs with space-time colored Gaussian noise
\begin{align} \label{E:NoiseCC}
  \E\left[\dot W(t, x) \dot W(s,y) \right]=\gamma(t-s) \Lambda(x-y).
\end{align}
Certain choices or limits of the parameters of the noise in \eqref{E:NoiseCC} may suggest the
correct moment asymptotics for SWE \eqref{E:SWE}. However, just as the SHE case, the SWE with space-time white noise needs a separate treatment (see Remark \ref{R:Hu-Wang} for more details). One
of the major contributions of this paper is to carry out such arguments and confirm the conjecture
about the moment asymptotics of SWE \eqref{E:SWE} by showing that if $u_0>0$ and $ u_1\ge 0$, then
\begin{subequations} \label{E:AsymSWE}
  \begin{align} \label{E:AsymSWE-t}
         C_1 p^{3/2}
     \le & \liminf_{t\to \infty} \frac{\log \E[u(t,x)^p]}{t}
     \le   \limsup_{t\to \infty} \frac{\log \E[u(t,x)^p]}{t}
     \le C_2 p^{3/2}, & p\ge 2, \\
         C_3 \: t
     \le & \liminf_{p\to \infty} \frac{\log \E[u(t,x)^p]}{p^{3/2}}
     \le   \limsup_{p\to \infty} \frac{\log \E[u(t,x)^p]}{p^{3/2}}
     \le C_4 \: t, & t>0, \label{E:AsymSWE-p}
  \end{align}
\end{subequations}
where $C_1,\cdots, C_4$ are some nonnegative constants that do not depend on $t$ and $p$. \bigskip

It turns out that the method that we use to resolve the above conjecture can be applied to a much
wider class of stochastic partial differential equations (SPDEs). Indeed, in this paper, we will
study the following stochastic fractional diffusion equation with both SHE \eqref{E:SHE} and SWE
\eqref{E:SWE} as two special cases:
\begin{equation}\label{E:fde}
  \begin{cases}
    \left(\partial_t^{\beta}+\dfrac{\nu}{2}\left(-\Delta\right)^{\alpha / 2}\right) u(t, x)= \: I_{t}^{\gamma}\left[\lambda u(t, x) \dot{W}(t, x)\right] , & t>0, x \in \mathbb{R}^{d},   \\
    u(0,\cdot)=u_0,                                                                                                                                      & \text { if } \beta \in(0,1], \\
    u(0,\cdot)=u_0, \quad \dfrac{\partial}{\partial t} u(0, \cdot)=u_1,                                                                                  & \text { if } \beta \in(1,2],
  \end{cases}
\end{equation}
where $\dot W$ is space-time white noise, $\left(-\Delta\right)^{\alpha / 2}$ is the fractional
Laplacian and
\begin{align*}
  \alpha>0, \quad
  \beta\in (0,2], \quad
  \gamma\ge 0, \quad
  \lambda\ne 0, \quad
  \nu >0, \quad
  u_0, u_1\in \R.
\end{align*}
The symbol $\partial_t^\beta$ denotes the {\em Caputo fractional differential} operator of order
$\beta> 0$:
\begin{align*}
  \partial_t^{\beta} f(t):=
  \begin{cases}
    \displaystyle
    \dfrac{1}{\Gamma(n-\beta)}\int_{0}^{t}\frac{f^{(n)}(\tau)}{(t-\tau)^{\beta+1-n}}\ud\tau, & \mbox{if } \beta \neq n, \\[1em]
    \dfrac{\ud^n}{\ud t^n}f(t),                                                              & \mbox{if }  \beta = n,
  \end{cases}
\end{align*}
where $n=\Ceil{\beta}$ is the smallest integer that is not smaller than $\beta$ (i.e.,
$\Ceil{\cdot}$ is the ceiling function), and $\Gamma(x)$ is the {\it gamma function}. We use
$I_t^\gamma$ to refer to the {\it Riemann-Liouville integral} in the time variable to the right of
zero $I_{0+}^\gamma$; see Definition \ref{def-R-I}.

The SPDE \eqref{E:fde} is interpreted as the following integral equation:
\begin{equation}\label{E:fde-Int}
  u(t,x) = J_0(t,x)+ \lambda\int_{0}^{t}\int_{\bR^d}p(t-s,x-y) u(s,y)W(\ud s,\ud y),
\end{equation}
where $J_0(t,x)$ is the solution to the homogeneous equation (see \eqref{E:J0(t,x)} and
\eqref{E:J0(t)} below), $p(t,x)$ is the underlying fundamental solution (see \eqref{E:p}), and the
stochastic integral refers to the {\it Walsh} or {\it Skorohod integral}. Set the following four
constants:
\begin{align}
\begin{array}{lcl} \label{E:theta}
  \theta := 2(\beta+\gamma)-2-\beta d/\alpha,                                                                 & \quad \quad & t_p:= p^{1+ 1 /(1+\theta)} t, \\ [0.5em]
  \displaystyle \Theta := (2\pi)^{-d}\int_{\R^d} E^2_{\beta, \beta+\gamma}(-2^{-1} \nu |\xi|^\alpha) \ud \xi, & \quad \quad & \widehat{t}:= \Theta\: \Gamma\left(\theta+1\right)t^{\theta+1},
\end{array}
\end{align}
where the function $E_{a,b}(z)$ is the {\em two parameter Mittag-Leffler function} of two parameters
(see, e.g., \cite[Section 1.8]{kilbas.srivastava.ea:06:theory}), i.e., for $a, b>0$,
\begin{align} \label{E:ML}
  E_{a, b}(z):=\sum_{k=0}^{\infty}\frac{z^k}{\Gamma(a k+b)}, \quad z\in \mathbb{C}.
\end{align}
We use the convention $E_a(\cdot):=E_{a,1}(\cdot)$.

We will prove in Theorem \ref{T:Exist} below that under {\it Dalang's condition}
\begin{equation}\label{E:Dalang'}
  \begin{cases}
    \displaystyle d<2\alpha+\frac{\alpha}{\beta}\min\{2\gamma-1,0\}, & \text{ if } \beta\in(0,2), \vspace{0.2cm} \\
    d<\alpha\min\{2, 1+\gamma\},                                     & \text{ if } \beta=2,
  \end{cases}
\end{equation}
there exists an unique random field solution $u(t,x)$ with finite $p$-th moment for all $p\ge
2$, $t>0$ and $x\in\R^d$. It is an easy exercise to check that Dalang's condition \eqref{E:Dalang'}
implies that $\theta>-1$ and $\Theta<\infty$, so that all constants in \eqref{E:theta} are
well-defined. The aim of this paper is to establish the following theorem, which gives the exact
formula for the second moment and the sharp moment asymptotics both in terms of $t$ and $p\ge 2$.

\begin{theorem}\label{T:fde}
  Suppose that Dalang's condition \eqref{E:Dalang'} is satisfied and let $u(t,x)$ be the solution to
  \eqref{E:fde}. Recall that the quantities $\theta$, $\Theta$, $t_p$ and $\widehat{t}$ are defined
  in \eqref{E:theta}. The $p$-th moment satisfies the following properties:

  \noindent (a) When $p=2$,
  \begin{equation}\label{E:SecMom}
    \E\left[u^2(t,x)\right] =
    \left\{
    \begin{array}{rc}
         u_0^2\:    E_{\theta+1}  \left(\lambda^2 \widehat{t}\: \right) & \qquad \text{if $\beta\in(0, 1]$,} \vspace{0.8em} \\
         u_0^2\:    E_{\theta+1}  \left(\lambda^2 \widehat{t}\: \right) &                                                   \\
      + 2u_0u_1t\:  E_{\theta+1,2}\left(\lambda^2 \widehat{t}\: \right) & \qquad \text{if $\beta\in(1,2]$,}                 \\
      + 2u_1^2t^2\: E_{\theta+1,3}\left(\lambda^2 \widehat{t}\: \right) &
    \end{array}\right.
  \end{equation}
  for all $t>0$ and $x\in\R^d$. As a consequence,
  \begin{equation}\label{E:2nd-Ly}
    \lim_{t\rightarrow\infty}\frac{1}{t}\log\E\left[u(t,x)^2\right]
    = \left(\lambda^2\Theta \Gamma(\theta+1)\right)^{1/(\theta+1)},
    \quad \text{for all $x\in\R^d$}.
  \end{equation}

  \noindent (b) For any $p\ge 2$,
  \begin{align} \label{E:p-mom}
    \Norm{u(t,x)}_p^2 \le
    \left\{
      \begin{array}{ rc }
           2 u_0^2\:    E_{\theta+1}  \left(8p \lambda^2 \widehat{t}\:\right) & \qquad \text{if $\beta\in(0,1]$,} \vspace{0.8em} \\
           2 u_0^2\:    E_{\theta+1}  \left(8p \lambda^2 \widehat{t}\:\right) &                                                  \\
         + 4 u_0u_1t\:  E_{\theta+1,2}\left(8p \lambda^2 \widehat{t}\:\right) & \qquad \text{if $\beta\in (1,2]$,}               \\
         + 4 u_1^2t^2\: E_{\theta+1,3}\left(8p \lambda^2 \widehat{t}\:\right) &                                                  \\
      \end{array}
    \right.
  \end{align}
  for all $t>0$ and $x\in\R^d$. As a consequence,
  \begin{align} \label{E:upper-tplim}
    \limsup_{t_p\rightarrow\infty} t_p^{-1} \log\E[|u(t,x)|^p]
    \le \frac{1}{2} \left(8 \lambda^2 \Theta \Gamma(\theta+1)\right)^{1/(\theta+1)}.
  \end{align}
  In particular, by freezing $p>2$ or $t>0$, we have the following two asymptotics:
  \begin{subequations} \label{E:upper-lim}
  \begin{gather}\label{E:upper-tlim}
        \limsup_{t\rightarrow\infty} t^{-1} \log\E[|u(t,x)|^p]
    \le \frac{1}{2} \left(8 \lambda^2 \Theta \Gamma(\theta+1)\right)^{1/(\theta+1)}
        p^{1+1/(\theta+1)}, \\
        \limsup_{p\rightarrow\infty} p^{-\left(1+1/(\theta+1)\right)}\log\E[|u(t,x)|^p]
    \le \frac12\left( 8 \lambda^2 \Theta \Gamma(\theta+1)\right)^{1/(\theta+1)} t.
    \label{E:upper-plim}
  \end{gather}
  \end{subequations}

  \noindent (c) If in addition we have
  \begin{enumerate}
    \item either $\beta\in(0,2)$ and the fundamental function $p(t,x)$ is nonnegative or
      $\alpha=\beta=2$ and $\gamma=0$; and
    \item the initial position $u_0$ is strictly positive and the initial velocity $u_1$ is
      nonnegative,
  \end{enumerate}
  then by treating $t_p$ defined in \eqref{E:theta} as a function from $\R_+\times 2\mathbb{N}$ to
  $\R_+$, for all $x\in\R^d$,
  \begin{align} \label{E:lower-tplim}
    C:=\liminf_{t_p\rightarrow\infty} t_p^{-1} \log\E\left[u(t,x)^p\right] >0.
  \end{align}
  In particular, by freezing an even integer $p\ge 2$ or $t>0$, with the same constant $C$ as in
  \eqref{E:lower-tplim}, we have the following two asymptotics:
  \begin{subequations} \label{E:lower-lim}
  \begin{gather}\label{E:lower-tlim}
        \liminf_{t\rightarrow\infty} t^{-1} \log\E\left[u(t,x)^p\right]
    \ge C p^{1+1/(\theta+1)}, \\
        \liminf_{p\rightarrow\infty} p^{-\left(1+1/(\theta+1)\right)}\log\E\left[u(t,x)^p\right]
    \ge C t.
    \label{E:lower-plim}
  \end{gather}
  \end{subequations}
\end{theorem}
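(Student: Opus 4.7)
My approach hinges on the mild formulation \eqref{E:fde-Int} together with the reduction identity
\[
\int_{\R^d} p(t,y)^2\,\ud y = \Theta\, t^\theta,
\]
which follows from Plancherel applied to the Fourier representation $\hat p(t,\xi) = t^{\beta+\gamma-1} E_{\beta,\beta+\gamma}(-2^{-1}\nu |\xi|^\alpha t^\beta)$ and the scaling change of variable $\xi \mapsto \xi\, t^{-\beta/\alpha}$. Every statement in Theorem \ref{T:fde} is reduced to a renewal-type equation or inequality with convolution kernel $(t-s)^\theta$, and the characteristic rate $1/(\theta+1)$ emerges from the classical asymptotic $E_{a,b}(z) \sim a^{-1} z^{(1-b)/a} e^{z^{1/a}}$ as $z\to\infty$. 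Translation invariance of the white noise together with the constant (in $x$) initial data implies that $\E[u(t,x)^p]$ depends only on $t$.

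For part (a), the It\^o (Wiener-chaos) isometry applied to \eqref{E:fde-Int} yields
\[
M_2(t) := \E[u(t,x)^2] = J_0(t)^2 + \lambda^2\, \Theta \int_0^t (t-s)^\theta\, M_2(s)\,\ud s,
\]
where $J_0(t)$ equals $u_0$ for $\beta\in(0,1]$ and $u_0 + u_1 t$ for $\beta\in(1,2]$. I Picard-iterate and collapse the Neumann series by repeatedly applying $\int_0^t (t-s)^\theta s^{k}\,\ud s = B(\theta+1,k+1)\, t^{\theta+k+1}$; this identifies the three monomial contributions $u_0^2$, $2u_0 u_1 t$, $u_1^2 t^2$ with the corresponding two-parameter Mittag-Leffler functions in \eqref{E:SecMom}, and the Lyapunov exponent \eqref{E:2nd-Ly} then drops out of the Mittag-Leffler asymptotic.

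For part (b), I replace the It\^o isometry with a Burkholder-Davis-Gundy-type estimate (producing the factor $4p$) and apply Minkowski once to peel off the deterministic $J_0$, arriving at
\[
\|u(t,x)\|_p^2 \le 2\, J_0(t)^2 + 8p\,\lambda^2\,\Theta \int_0^t (t-s)^\theta\, \|u(s,y)\|_p^2\,\ud s.
\]
This has the same structure as the second-moment renewal with $\lambda^2$ replaced by $8p\lambda^2$, so the identical Mittag-Leffler identification gives \eqref{E:p-mom}; extracting logarithms along the scaling $t_p = p^{1+1/(\theta+1)}\, t$ produces \eqref{E:upper-tplim}, and the one-variable limits \eqref{E:upper-lim} follow by freezing either $p$ or $t$.

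The main obstacle is part (c), which I attack along the lines of Hu and Wang \cite{hu.wang:21:intermittency}, adapted from their colored-noise setting to the more singular space-time white-noise regime under hypotheses (1)-(2). Those hypotheses guarantee that in the Wiener chaos expansion
\[
u(t,x) = J_0(t) + \sum_{n\ge 1} \lambda^n\, I_n\bigl(f_n^{(t,x)}\bigr),
\]
every kernel $f_n^{(t,x)}$, built as an iterated product of factors $p(s_{i-1}-s_i,\, y_{i-1}-y_i)$ times $J_0(s_n)$, is pointwise nonnegative. For an even integer $p=2m$ the moment $\E[u(t,x)^p]$ expands as a sum over Wiener-diagram pairings, every summand of which inherits this nonnegativity; this lets me discard all sign-indefinite contributions and retain a distinguished subfamily --- morally, a chain of nested contractions --- whose aggregate size, after iterating $\int p^2 = \Theta t^\theta$ through $n$ convolutions and applying Stirling's formula at the optimal level $n \asymp p^{1+1/(\theta+1)}$, is of order $\exp(c\, t_p)$ for some $c>0$, matching \eqref{E:upper-tplim} in exponential order. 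The principal technical difficulty, distinguishing this from the colored-noise analysis of \cite{hu.wang:21:intermittency}, is that in the white-noise setting the surviving pairings are much more singular: the combinatorial selection of the favorable diagrams and the quantitative control of their aggregate contribution therefore require substantially more care, and this is where the real work of the proof will lie.
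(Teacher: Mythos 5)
Your treatment of parts (a) and (b) is correct and matches the paper: the It\^o--Walsh isometry reduces the second moment to the renewal equation $\eta(t)=J_0^2(t)+\lambda^2\Theta\int_0^t(t-s)^\theta\eta(s)\,\ud s$, and your Neumann-series/Beta-function collapse is exactly the alternative route the paper records in Remark \ref{R:method2} (the paper's main proof instead converts to a Caputo fractional ODE, but the two are equivalent). Likewise the BDG--Minkowski step producing the $8p\lambda^2$ kernel and the same Mittag--Leffler identification is the paper's argument for part (b).

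Part (c), however, contains a genuine gap that you yourself flag but do not close. The identity $\int_{\R^d}p(t,y)^2\,\ud y=\Theta t^\theta$ is the right tool for the \emph{upper} bound, but it cannot be ``iterated through $n$ convolutions'' to produce the lower bound: in the Feynman diagram formula \eqref{E:product}, the spatial delta functions pair kernels from \emph{different} chaos factors, so the quantity you must bound from below is $\int p(t,a-y)\,p(s,b-y)\,\ud y$ with $t\neq s$ and $a\neq b$ --- not an $L^2$ norm of a single kernel. The paper's essential ingredient here is Proposition \ref{P:nondeg}, a small-ball nondegeneracy estimate $\iint_{B_\e^2(x)}p(t,a-y)p(s,b-y')\delta_0(y-y')\,\ud y\,\ud y'\ge C\e^{-d}(ts)^{\beta+\gamma-1}$ valid uniformly for $s,t\in[2\e^{\alpha/\beta},c\e^{\alpha/\beta}]$ and $a,b\in B_\e(x)$, proved via the Fox $H$-function representation \eqref{E:Yab} (or the explicit wave kernel when $\alpha=\beta=2$). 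Making this applicable requires the time-localization step (restricting each $t_{(i,r_i)}$ to disjoint intervals of length $L/2$ so that all increments lie in $[2\e^{\alpha/\beta},12\e^{\alpha/\beta}]$ with $\e=(pt/16m)^{\beta/\alpha}$) and the combinatorial count that the balanced diagrams number at least $((p/2)!)^{m_p}(r_p/2)!$. Your sketch names none of these, and your description of the surviving diagrams as ``a chain of nested contractions'' does not match the balanced-diagram structure (horizontal edges from the left half of the columns to the right half) that actually carries the count. Without the nondegeneracy proposition and the time localization, the per-diagram lower bound --- and hence the exponent $p^{1+1/(\theta+1)}t$ --- cannot be extracted; this is precisely the point where the white-noise case departs from Hu--Wang and where the proof has to be supplied rather than deferred.
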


As applications of Theorem \ref{T:fde}, in Section \ref{S:Example} we shall revisit some SPDEs which have attracted considerable attention in the literature and calculate sharp moment asymptotics for the solutions.

 To conclude the introduction, we highlight some of the contributions of this paper:
\begin{enumerate}[wide=1em]
  \item The conjecture on the moment asymptotics of SWE \eqref{E:SWE} is solved; see     \eqref{E:AsymSFWE} and Example~\ref{Ex:SWE}.

  \item For the solutions of  a large class of SPDEs, explicit representations  \eqref{E:SecMom} for the second moments and \eqref{E:2nd-Ly} for the second moment Lyapunov exponents are obtained;  the asymptotic behavior of $p$-th moments is characterized sharply by the upper bounds \eqref{E:upper-tplim} and the lower bounds \eqref{E:lower-tplim}.  Regarding the quantities obtained in Theorem~ \ref{T:fde}, as will be shown in Section~\ref{S:Example}, some of them recover known results for SPDEs with some specific parameters $(\alpha, \beta, \gamma, d)$  in the literature, while, to our best knowledge,  most of them (in particular the lower bounds for $p$-th moments) are new. Moreover, the quantities that characterize the asymptotics of the solutions to \eqref{E:fde} depend on the parameters $\left(\alpha,\beta,\gamma,d\right)$ in an interesting way (see also the figures in Section~\ref{S:Example} for an illustration), which may relate to physical phenomena and desire further investigation.

  \item For the fundamental solution of \eqref{E:fde}, we extend the results in
    \cite{chen.hu.ea:19:nonlinear} from $\alpha\in (0,2]$ to  all $\alpha>0$ (see Appendix \ref{S:Funamental}). As a consequence,  Dalang's condition \eqref{E:Dalang'} allows to consider SPDE \eqref{E:fde} in high dimension $d$,  if $\alpha$ is sufficiently big.

\end{enumerate}

%
%
%
%
%
%


\bigskip

The paper is organized as follows: We first list some examples and give some discussions in Section
\ref{S:Example}. Then in Section \ref{S:Exist}, we establish the existence and uniqueness of the
solution in a slightly more general setting. The second moment formula and the $p$-th moment upper
bounds are obtained in Section \ref{S:Upper}; while the lower $p$-th moment bounds are derived in
Section \ref{S:Lower}. Some preliminaries about the fractional calculus and Mittag-Leffler functions
are given in Appendix~\ref{S:Prelim}. In Appendix~\ref{S:Lemmas} we prove some technical lemmas used
in this paper. Finally, in Appendix \ref{S:Funamental}, we derive the fundamental solutions under
the settings of $\alpha>0$,  $\beta\in(0,2]$ and $\gamma\ge 0$. \bigskip

Throughout the paper, $\Norm{\cdot}_p$ denotes the probability $L^p(\Omega)$-norm. We use $B_r(x)$
to denote an open ball centered at $x\in\R^d$ with radius $r$, i.e., $B_r(x)=\left\{x\in\R^d:\:
|x|<r\right\}$, where $|x|=\sqrt{x_1^2+\cdots+x_d^2}$.  For $a\in\R$, $\Ceil{a}$ (resp. $\Floor{a}$)
is the smallest (resp. largest) integer that is not smaller (resp. larger) than $a$, i.e., the
ceiling (reps. floor) function. We use the convention $\mathbb{N} = \{1,2,\cdots\}$.

\section{Examples and discussions} \label{S:Example}
In this section, we give some concrete examples for the main result Theorem \ref{T:fde}. We will use $C_1,\cdots, C_4$ to denote generic
constants that do not depend on $t$ and $p$.

\begin{example}[SHE] \label{Ex:SHE}
  When $\alpha = 2$, $\beta=1$, $\gamma=0$ and $d=1$, equation \eqref{E:fde} reduces to SHE
  \eqref{E:SHE}. In this case, Dalang's condition \eqref{E:Dalang'} is satisfied and
  \begin{align*}
    \theta      = -1/2, \quad
    \Theta      = \frac{1}{2\pi} \int_\R e^{-\nu |\xi|^2}\ud \xi
                = \frac{1}{\sqrt{4\pi\nu}}, \quad
    \widehat{t} = \frac{\sqrt{t}}{\sqrt{4\nu}}, \quad \text{and} \quad
    t_p         = p^3 t.
  \end{align*}
  \begin{enumerate}[wide=1em]
    \item {\it Second moment formula:} The second moment formula \eqref{E:SecMom} reduces to
      \begin{equation} \label{E:2ndSHE}
         \E[u^2(t,x)]
         = u_0^2\: E_{1/2}\left(\frac{\lambda^2}{\sqrt{4\nu}}t^{1/2}\right)
         = 2u_0^2\:e^{\frac{\lambda^4t}{4\nu}}\Phi\left(\frac{\lambda^2t^{1/2}}{\sqrt{2\nu}}\right),
      \end{equation}
      where we have applied \eqref{E:ML-ex}. This formula recovers the one obtained in
      \cite{chen.dalang:15:moments} as a special case; see Corollary 2.5. {\it ibid.} \smallskip
    \item {\it Second moment Lyapunov exponent:} From \eqref{E:2ndSHE}, we immediately see that
      \begin{align} \label{E:LySHE}
        \lim_{t\to \infty} t^{-1} \log \E[u(t,x)^2] = \frac{\lambda^4}{4\nu}.
      \end{align}
      Results obtained by Balan and Song \cite{balan.song:19:second} also reduce to this special
      case with the exact second moment Lyapunov exponent being equal to $1/4$ (where $\lambda=1$
      and $\nu=1$); see Remark 1.6 {\it ibid.} \smallskip
    \item {\it Moment asymptotics:} Because the heat kernel is nonnegative, we can combine the
      asymptotics in \eqref{E:upper-lim} and \eqref{E:lower-lim} to conclude that
      \begin{subequations} \label{E:AsymSHE}
        \begin{align} \label{E:AsymSHE-t}
             & C_1 p^3
           \le \liminf_{t\to \infty} t^{-1} \log \E[u(t,x)^p]
           \le \limsup_{t\to \infty} t^{-1} \log \E[u(t,x)^p]
           \le C_2 p^{3}, & p\ge 2, \\
             & C_3\: t
           \le \liminf_{p\to \infty} p^{-3} \log \E[u(t,x)^p]
           \le \limsup_{p\to \infty} p^{-3} \log \E[u(t,x)^p]
           \le C_4\: t,   & t>0. \label{E:AsymSHE-p}
        \end{align}
      \end{subequations}
      These asymptotics are consistent with the exact asymptotics obtained by X. Chen; see
      \cite[Theorem 1.1, Remark 3.1]{chen:15:precise}.
  \end{enumerate}
\end{example}

\begin{example}[SWE] \label{Ex:SWE}
  When $\alpha=2$, $\beta=2$, $\gamma=0$ and $d=1$, equation \eqref{E:fde} reduces to SWE
  \eqref{E:SWE}. In this case, $J_0(t) = u_0+u_1 t$, Dalang's condition \eqref{E:Dalang'} is
  satisfied, and
  \begin{align*}
    \theta      = 1, \quad
    \Theta      = \frac{1}{\pi}\int_0^\infty \frac{\sin\left(\sqrt{\nu/2}\: \xi\right)^2}{\left(\nu/2\right)\xi^2}\ud \xi
                = \frac{1}{\sqrt{2\nu}}, \quad
    \widehat{t} = \frac{t^2}{\sqrt{2\nu}}, \quad\text{and}\quad
    t_p         = p^{3/2}t,
  \end{align*}
  where $\Theta$ is obtained via Lemma \ref{L:sin}.
  \begin{enumerate}[wide=1em]
    \item {\it Second moment formula:} The second moment formula \eqref{E:SecMom} becomes
      \begin{align} \notag
        \E\left[u^2(t,x)\right]
        &= u_0^2E_{2}\left(\frac{\lambda^2t^2}{\sqrt{2\nu}}\right)+2u_0u_1tE_{2,2}\left(\frac{\lambda^2t^2}{\sqrt{2\nu}}\right)
        + 2u_1^2t^2E_{2,3}\left(\frac{\lambda^2t^2}{\sqrt{2\nu}}\right).
      \end{align}
      Now using \eqref{E:ML_a+b} and the special cases in \eqref{E:ML-ex}, we see that
      \begin{align} \label{E:2ndSWE}
      \begin{split}
        \E\left[u^2(t,x)\right]
        = - \frac{2^{3/2} \nu^{1/2} u_1^2}{\lambda^2}
        + \left(u_0^2+\frac{2^{3/2} \nu^{1/2} u_1^2}{\lambda^2}\right)\cosh\left(\frac{|\lambda| t}{(2\nu)^{1/4}}\right)\: &  \\
        + \frac {2^{5/4}\nu^{1/4}u_0u_1}{|\lambda|}\sinh\left(\frac{|\lambda| t}{(2\nu)^{1/4}}\right),                     &
      \end{split}
      \end{align}
      which recovers \cite[Corollary 1.1]{chen.dalang:15:moment} \footnote{There is a typo in the
        paper \cite{chen.dalang:15:moment} where the fundamental solution for the wave kernel should
        be $\frac{1}{2\kappa} \one_{[-\kappa t,\kappa t]}(x)$ instead of $\frac{1}{2} \one_{[-\kappa
        t,\kappa t]}(x)$; see the equation after (1.2) {\it ibid.} If one sets $\kappa=1$ {\it
      ibid.} or equivalently sets $\nu=2$ in the current paper, the results should coincide.}.
      \smallskip
    \item {\it Second moment Lyapunov exponent:} From \eqref{E:2ndSWE}, we immediately see that
      \begin{align} \label{E:LySWE}
        \lim_{t\to \infty} t^{-1}\E[|u(t,x)|^2] = \frac{|\lambda|}{\left(2\nu\right)^{1/4}},
      \end{align}
      which has also been obtained by Balan and Song in \cite[Remark 1.6]{balan.song:19:second}.
      \smallskip
    \item {\it Moment asymptotics:} Since the fundamental solution is nonnegative, combining the
      asymptotics in \eqref{E:upper-lim} and \eqref{E:lower-lim} shows  \eqref{E:AsymSWE}. The upper
      bound in the large-time asymptotics \eqref{E:AsymSWE-t} is consistent with \cite[Theorem 2.7
      ]{chen.dalang:15:moment}.
  \end{enumerate}
\end{example}

\begin{example}[SFHE] \label{Ex:SFHE}
  When $\alpha>0$, $\beta=1$, $\gamma=0$ and $d=1$, equation \eqref{E:fde} becomes the following
  one-dimensional stochastic fractional heat equation:
\begin{equation}\label{E:SFHE}
  \text{(SFHE)} \quad
  \begin{cases}
    \left(\dfrac{\partial}{\partial t}+\dfrac{\nu}{2}\left(-\Delta\right)^{\alpha / 2}\right) u(t, x)= \: \lambda u(t, x) \dot{W}(t, x) , & t>0, x \in \mathbb{R}, \\
    u(0,\cdot)=u_0.
  \end{cases}
\end{equation}
  In this case, Dalang's condition \eqref{E:Dalang'} becomes $\alpha>d=1$. For for $\alpha>1$, we
  have
  \begin{align*}
    \theta              = & -\frac{1}{\alpha}, \\
    \Theta_{\alpha,\nu} = & \frac{1}{2\pi} \int_\R e^{-\nu|\xi|^\alpha}\ud\xi
                        = \frac{\Gamma\left(1+1/\alpha\right)}{\nu^{1/\alpha}\pi}, \\
    \widehat{t}         = & \frac{\Gamma\left(1-1/\alpha\right)\Gamma\left(1+1/\alpha\right)}{\nu^{1/\alpha}\pi} t^{1-1/\alpha}
                        = \left(\nu^{1/\alpha}\alpha \sin\left(\pi / \alpha\right)\right)^{-1} t^{1-1/\alpha},\\
    t_p                 = & p^{1 + \alpha/(\alpha-1)} t,
  \end{align*}
  where in computing $\widehat{t}$ we have use the reflection formula \eqref{E:Reflection}.
  \begin{enumerate}[wide=1em]
    \item {\it Second moment formula:} The second moment formula \eqref{E:SecMom} reduces to
      \begin{equation} \label{E:2ndSFHE}
         \E[u^2(t,x)]
         = u_0^2\: E_{1-1/\alpha}\left(\frac{\lambda^2}{\nu^{1/\alpha}\alpha \sin\left(\pi / \alpha\right)} t^{1-1/\alpha}\right).
      \end{equation}
      In \cite{chen.dalang:15:moments}, this equation with $\alpha\in (1,2]$ has been studied with a
      non-homogeneous initial conditions. \smallskip
    \item {\it Second moment Lyapunov exponent:} From \eqref{E:2ndSFHE}, we immediately see that
      \begin{align} \label{E:LySFHE}
           \lim_{t\to \infty} t^{-1}\E[|u(t,x)|^2]
           = \left(\frac{\lambda^2}{\nu^{1/\alpha}\alpha \sin\left(\pi / \alpha\right)} \right)^{\alpha/(\alpha-1)},
         \qquad \alpha>1;
      \end{align}
      see Figure \ref{F:SFHE} for a plot of this expression as a function of $\alpha$.
    \item {\it Moment asymptotics:} If $\alpha\in (1,2]$, the fundamental solution is nonnegative
      (see Remark \ref{R:Nonneg}), then the asymptotics in \eqref{E:upper-lim} and
      \eqref{E:lower-lim} reduce to
      \begin{subequations} \label{E:AsymSFHE}
        \begin{align} \label{E:AsymSFHE-t}
               C_1 p^{\frac{2\alpha-1}{\alpha-1}}
           \le & \liminf_{t\to \infty} \frac{\log \E[u(t,x)^p]}{t}
           \le   \limsup_{t\to \infty} \frac{\log \E[|u(t,x)|^p]}{t}
           \le C_2 p^{\frac{2\alpha-1}{\alpha-1}}, & p\ge 2, \\
               C_3 \: t
           \le & \liminf_{p\to \infty} \frac{\log \E[u(t,x)^p]}{p^{\frac{2\alpha-1}{\alpha-1}}}
           \le   \limsup_{p\to \infty} \frac{\log \E[|u(t,x)|^p]}{p^{\frac{2\alpha-1}{\alpha-1}}}
           \le C_4 \: t, & t>0. \label{E:AsymSFHE-p}
        \end{align}
      \end{subequations}
      The upper bound in the large-time asymptotics \eqref{E:AsymSFHE-t} is consistent with
      \cite[Theorem 3.4]{chen.dalang:15:moments}. In \cite[Theorem 1.1]{chen.hu.ea:18:temporal},
      Chen {\it et al} obtained the exact large-time asymptotics when the noise is colored in the
      sense of \eqref{E:NoiseCC}. Note also that only the lower bounds in \eqref{E:AsymSFHE-t} and
      \eqref{E:AsymSFHE-p} require the nonnegativity of the fundamental solution. The upper bounds
      still hold true for all $\alpha>1$.
  \end{enumerate}
\end{example}

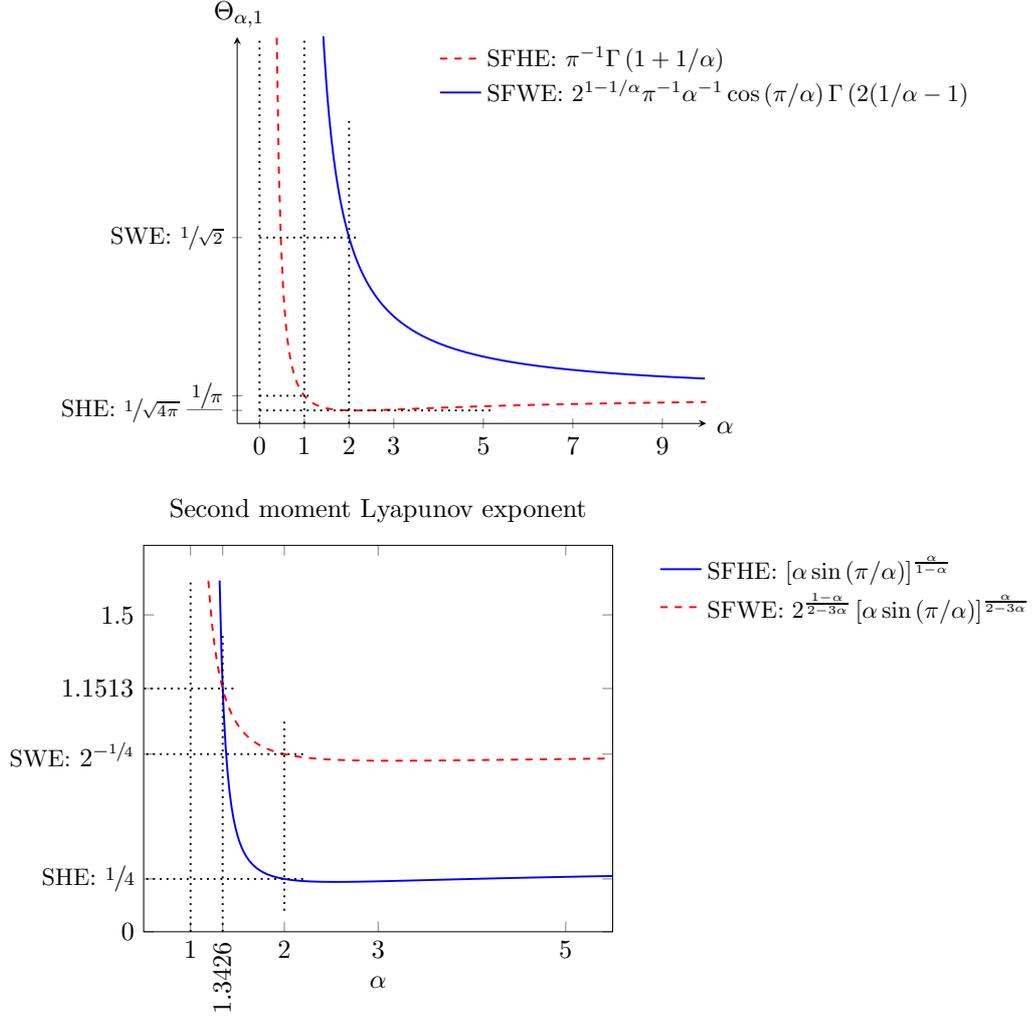
\begin{figure}[htpb]
  \centering
  \begin{center}
    \begin{tikzpicture}[scale=0.9, transform shape, x=3em, y=5em]
      \tikzset{>=latex}
      \begin{axis}[
        ymax=1.2,
        axis lines = left,
        xmin=-0.5,
        ytick={      0.31813,          0.282095,                                     0.707 },
        yticklabels={$\sfrac{1}{\pi}$, {\small SHE: $\sfrac{1}{\sqrt{4\pi}}$ -----}, {\small SWE: $\sfrac{1}{\sqrt{2}}$}},
        xtick={0,1, 2, 3, 5,7,9},
        xlabel={$\alpha$}, xlabel style={at=(current axis.right of origin), anchor=center, xshift=2.5em, yshift=1.2em},
        ylabel={$\Theta_{\alpha,1}$},ylabel style={at=(current axis.above origin), anchor=south, rotate = -90, xshift=2.5em, yshift=1.5em},
        legend style={at={(1.0,+1.00)}, anchor=north, legend cell align=left, draw=none},
        ]
        \addplot[domain=0.3:10, red, dashed, thick] coordinates {
( 0.329,2.0074879503560927 )
( 0.379,1.2363683056054173 )
( 0.42900000000000005,0.8821615055373297 )
( 0.47900000000000004,0.6913083198933705 )
( 0.529,0.5767483830081792 )
( 0.579,0.5025097545374081 )
( 0.629,0.45161032351974006 )
( 0.679,0.41518861083662834 )
( 0.7290000000000001,0.38824701366795894 )
( 0.779,0.36778728269804356 )
( 0.829,0.35191802416601115 )
( 0.879,0.3393955536565757 )
( 0.929,0.32937316869948013 )
( 0.9790000000000001,0.3212571925661031 )
( 1.0290000000000001,0.31462071478514525 )
( 1.079,0.30914996738786316 )
( 1.129,0.3046099059855785 )
( 1.179,0.30082148936659414 )
( 1.229,0.29764630531583103 )
( 1.2790000000000001,0.29497593576142755 )
( 1.329,0.29272445398034036 )
( 1.379,0.2908230369150719 )
( 1.429,0.2892160339825739 )
( 1.479,0.28785805673196757 )
( 1.5290000000000001,0.28671179561695354 )
( 1.579,0.28574636233601963 )
( 1.629,0.28493601721418027 )
( 1.679,0.28425918219215657 )
( 1.729,0.2836976681031991 )
( 1.7790000000000001,0.2832360644366572 )
( 1.829,0.2828612535238862 )
( 1.879,0.2825620208717554 )
( 1.929,0.28232874042822115 )
( 1.979,0.2821531187107501 )
( 2.0290000000000004,0.2820279855187692 )
( 2.079,0.2819471217699424 )
( 2.129,0.2819051171150199 )
( 2.1790000000000003,0.2818972515864427 )
( 2.229,0.28191939675668015 )
( 2.2790000000000004,0.2819679328205133 )
( 2.329,0.2820396787416957 )
( 2.3790000000000004,0.2821318331703325 )
( 2.4290000000000003,0.28224192428112055 )
( 2.479,0.2823677670327427 )
( 2.5290000000000004,0.28250742662656725 )
( 2.579,0.2826591871644889 )
( 2.6290000000000004,0.2828215246835532 )
( 2.6790000000000003,0.2829930838883091 )
( 2.7290000000000005,0.2831726580178887 )
( 2.7790000000000004,0.2833591713792163 )
( 2.829,0.28355166415487737 )
( 2.8790000000000004,0.2837492791574485 )
( 2.9290000000000003,0.28395125025420015 )
( 2.9790000000000005,0.2841568922291642 )
( 3.0290000000000004,0.2843655918853028 )
( 3.079,0.2845768002192756 )
( 3.1290000000000004,0.28479002552616556 )
( 3.1790000000000003,0.2850048273123579 )
( 3.2290000000000005,0.2852208109122859 )
( 3.2790000000000004,0.2854376227195236 )
( 3.329,0.2856549459551956 )
( 3.3790000000000004,0.28587249690726396 )
( 3.4290000000000003,0.286090021583253 )
( 3.4790000000000005,0.28630729272664796 )
( 3.5290000000000004,0.28652410715375926 )
( 3.579,0.28674028337346014 )
( 3.6290000000000004,0.2869556594570273 )
( 3.6790000000000003,0.28717009112946 )
( 3.7290000000000005,0.28738345005723365 )
( 3.7790000000000004,0.2875956223105306 )
( 3.829,0.28780650698066906 )
( 3.8790000000000004,0.28801601493577195 )
( 3.9290000000000003,0.2882240676997345 )
( 3.9790000000000005,0.28843059644130764 )
( 4.029,0.28863554106164424 )
( 4.079,0.2888388493699983 )
( 4.1290000000000004,0.2890404763384349 )
( 4.179,0.2892403834274403 )
( 4.229,0.289438537975222 )
( 4.279,0.2896349126442817 )
( 4.329,0.2898294849195453 )
( 4.379,0.2900222366529484 )
( 4.429,0.2902131536499221 )
( 4.479,0.29040222529370413 )
( 4.529,0.29058944420382893 )
( 4.579,0.2907748059255264 )
( 4.629,0.2909583086470974 )
( 4.679,0.2911399529426308 )
( 4.729,0.29131974153769447 )
( 4.779,0.2914976790958684 )
( 4.829,0.2916737720242002 )
( 4.879,0.2918480282958522 )
( 4.929,0.29202045728837805 )
( 4.979,0.2921910696362168 )
( 5.029,0.2923598770961312 )
( 5.079,0.29252689242443347 )
( 5.1290000000000004,0.29269212926495647 )
( 5.179,0.2928556020468208 )
( 5.229,0.29301732589114116 )
( 5.279,0.2931773165258912 )
( 5.329,0.2933355902082203 )
( 5.3790000000000004,0.2934921636535781 )
( 5.429,0.2936470539710619 )
( 5.479,0.2938002786044557 )
( 5.529,0.2939518552784741 )
( 5.579,0.2941018019497712 )
( 5.6290000000000004,0.2942501367623102 )
( 5.679,0.29439687800672804 )
( 5.729,0.2945420440833572 )
( 5.779,0.2946856534686012 )
( 5.829,0.2948277246843811 )
( 5.8790000000000004,0.29496827627039973 )
( 5.929,0.2951073267589871 )
( 5.979,0.2952448946523147 )
( 6.029,0.2953809984017814 )
( 6.079,0.29551565638939176 )
( 6.1290000000000004,0.29564888691096197 )
( 6.179,0.29578070816100227 )
( 6.229,0.2959111382191382 )
( 6.279,0.29604019503794243 )
( 6.329,0.29616789643206154 )
( 6.3790000000000004,0.2962942600685307 )
( 6.429,0.2964193034581773 )
( 6.479,0.2965430439480238 )
( 6.529,0.2966654987146064 )
( 6.579,0.29678668475813375 )
( 6.6290000000000004,0.29690661889741476 )
( 6.679,0.29702531776549196 )
( 6.729,0.2971427978059199 )
( 6.779,0.2972590752696351 )
( 6.829,0.29737416621236623 )
( 6.8790000000000004,0.29748808649253944 )
( 6.929,0.29760085176963463 )
( 6.979,0.29771247750295504 )
( 7.029,0.2978229789507731 )
( 7.079,0.2979323711698194 )
( 7.1290000000000004,0.2980406690150842 )
( 7.179,0.2981478871399034 )
( 7.229,0.29825403999630284 )
( 7.279,0.2983591418355766 )
( 7.329,0.2984632067090781 )
( 7.3790000000000004,0.2985662484692028 )
( 7.429,0.2986682807705443 )
( 7.479,0.29876931707120624 )
( 7.529,0.2988693706342547 )
( 7.579,0.29896845452929566 )
( 7.6290000000000004,0.2990665816341648 )
( 7.679,0.29916376463671673 )
( 7.729,0.2992600160367029 )
( 7.779,0.29935534814772646 )
( 7.829,0.29944977309926624 )
( 7.8790000000000004,0.29954330283875935 )
( 7.929,0.299635949133735 )
( 7.979,0.299727723573992 )
( 8.029,0.29981863757381316 )
( 8.079,0.2999087023742102 )
( 8.129000000000001,0.29999792904519285 )
( 8.179,0.3000863284880582 )
( 8.229000000000001,0.3001739114376939 )
( 8.279,0.30026068846489185 )
( 8.329,0.30034666997866777 )
( 8.379000000000001,0.3004318662285838 )
( 8.429,0.30051628730706914 )
( 8.479000000000001,0.3005999431517373 )
( 8.529000000000002,0.3006828435476967 )
( 8.579,0.3007649981298513 )
( 8.629000000000001,0.30084641638519 )
( 8.679,0.30092710765506253 )
( 8.729000000000001,0.3010070811374391 )
( 8.779000000000002,0.30108634588915356 )
( 8.829,0.301164910828127 )
( 8.879000000000001,0.30124278473557203 )
( 8.929,0.3013199762581753 )
( 8.979000000000001,0.30139649391025725 )
( 9.029000000000002,0.3014723460759096 )
( 9.079,0.3015475410111077 )
( 9.129000000000001,0.3016220868457984 )
( 9.179,0.3016959915859623 )
( 9.229000000000001,0.3017692631156503 )
( 9.279000000000002,0.30184190919899323 )
( 9.329,0.301913937482185 )
( 9.379000000000001,0.30198535549543803 )
( 9.429,0.3020561706549124 )
( 9.479000000000001,0.30212639026461613 )
( 9.529000000000002,0.30219602151827857 )
( 9.579,0.30226507150119575 )
( 9.629000000000001,0.30233354719204814 )
( 9.679,0.3024014554646904 )
( 9.729000000000001,0.3024688030899134 )
( 9.779000000000002,0.3025355967371789 )
( 9.829,0.3026018429763259 )
( 9.879000000000001,0.30266754827925085 )
( 9.929000000000002,0.3027327190215593 )
( 9.979000000000001,0.30279736148419123 )
          };
        \addplot[domain=1.0:10, blue, solid, thick] coordinates {
( 1.2009999999999998,2.0867457833800365 )
( 1.251,1.7567303524178692 )
( 1.301,1.5332994595544645 )
( 1.351,1.3713836300293887 )
( 1.4009999999999998,1.2483045970876527 )
( 1.4509999999999998,1.1513813362892384 )
( 1.501,1.0729535312410106 )
( 1.551,1.0081118659049149 )
( 1.601,0.9535594515183543 )
( 1.6509999999999998,0.9069966909263272 )
( 1.701,0.8667685302862456 )
( 1.751,0.8316520145690276 )
( 1.801,0.80072297878257 )
( 1.851,0.7732694241241155 )
( 1.9009999999999998,0.7487335116830016 )
( 1.951,0.726671689641747 )
( 2.001,0.7067266481974173 )
( 2.051,0.6886071884793044 )
( 2.101,0.6720735080296164 )
( 2.151,0.656926269316748 )
( 2.201,0.6429983589068871 )
( 2.251,0.6301485920780721 )
( 2.301,0.6182568452237637 )
( 2.351,0.6072202505022359 )
( 2.401,0.5969501906917917 )
( 2.451,0.5873699038003327 )
( 2.501,0.5784125572401452 )
( 2.551,0.5700196871594955 )
( 2.601,0.5621399243220028 )
( 2.651,0.5547279467492169 )
( 2.701,0.5477436132294449 )
( 2.751,0.541151242147321 )
( 2.801,0.5349190078788577 )
( 2.851,0.529018432912435 )
( 2.901,0.5234239583866186 )
( 2.951,0.5181125792328423 )
( 3.001,0.5130635328308252 )
( 3.051,0.50825803221487 )
( 3.101,0.503679036548841 )
( 3.151,0.49931105292038497 )
( 3.201,0.4951399645688027 )
( 3.251,0.4911528815149848 )
( 3.301,0.4873380102511351 )
( 3.351,0.48368453970718067 )
( 3.4010000000000002,0.48018254116665865 )
( 3.451,0.47682288017824054 )
( 3.501,0.47359713881624516 )
( 3.551,0.47049754689728146 )
( 3.601,0.46751692097070624 )
( 3.6510000000000002,0.4646486100759095 )
( 3.701,0.4618864474060326 )
( 3.751,0.4592247071406524 )
( 3.801,0.456658065813508 )
( 3.851,0.4541815676687369 )
( 3.9010000000000002,0.4517905935331829 )
( 3.951,0.4494808327952747 )
( 4.0009999999999994,0.4472482581346345 )
( 4.051,0.44508910269244734 )
( 4.101,0.44299983941192916 )
( 4.151,0.44097716231201944 )
( 4.2010000000000005,0.4390179694865373 )
( 4.2509999999999994,0.4371193476461794 )
( 4.301,0.4352785580425052 )
( 4.351,0.4334930236319503 )
( 4.401,0.43176031735433107 )
( 4.4510000000000005,0.4300781514146236 )
( 4.5009999999999994,0.42844436746931236 )
( 4.551,0.4268569276295397 )
( 4.601,0.4253139062028939 )
( 4.651,0.42381348210409947 )
( 4.7010000000000005,0.42235393187230086 )
( 4.7509999999999994,0.42093362323917477 )
( 4.801,0.4195510091978818 )
( 4.851,0.4182046225279954 )
( 4.901,0.4168930707360697 )
( 4.9510000000000005,0.41561503137553474 )
( 5.0009999999999994,0.4143692477131952 )
( 5.051,0.41315452471277997 )
( 5.101000000000001,0.4119697253088469 )
( 5.151,0.41081376694687455 )
( 5.2010000000000005,0.4096856183676482 )
( 5.2509999999999994,0.4085842966160781 )
( 5.301,0.407508864256411 )
( 5.351000000000001,0.40645842677743305 )
( 5.401,0.40543213017273716 )
( 5.4510000000000005,0.40442915868244184 )
( 5.5009999999999994,0.4034487326839614 )
( 5.551,0.40249010672048363 )
( 5.601000000000001,0.401552567656802 )
( 5.651,0.40063543295301834 )
( 5.7010000000000005,0.3997380490474366 )
( 5.7509999999999994,0.3988597898406862 )
( 5.801,0.39800005527376814 )
( 5.851000000000001,0.39715826999332327 )
( 5.901,0.39633388209794407 )
( 5.9510000000000005,0.3955263619598666 )
( 6.0009999999999994,0.3947352011168087 )
( 6.051,0.3939599112291451 )
( 6.101000000000001,0.393200023097972 )
( 6.151,0.3924550857399653 )
( 6.2010000000000005,0.3917246655152396 )
( 6.2509999999999994,0.39100834530470724 )
( 6.301,0.390305723733699 )
( 6.351000000000001,0.38961641443883943 )
( 6.401,0.38894004537540255 )
( 6.4510000000000005,0.38827625816256317 )
( 6.5009999999999994,0.38762470746415734 )
( 6.551,0.3869850604027237 )
( 6.601000000000001,0.386356996004766 )
( 6.651,0.3857402046753134 )
( 6.7010000000000005,0.3851343876999965 )
( 6.7509999999999994,0.3845392567729708 )
( 6.801,0.38395453354914255 )
( 6.851000000000001,0.38337994921925284 )
( 6.901,0.38281524410646794 )
( 6.9510000000000005,0.38226016728322554 )
( 7.0009999999999994,0.38171447620715726 )
( 7.051,0.38117793637499464 )
( 7.101000000000001,0.38065032099342894 )
( 7.151,0.38013141066596906 )
( 7.2010000000000005,0.3796209930948963 )
( 7.2509999999999994,0.37911886279747864 )
( 7.301,0.37862482083565413 )
( 7.351000000000001,0.3781386745584434 )
( 7.401,0.37766023735640586 )
( 7.4510000000000005,0.3771893284274778 )
( 7.5009999999999994,0.3767257725535941 )
( 7.551,0.3762693998875116 )
( 7.601000000000001,0.37582004574930133 )
( 7.651,0.37537755043199683 )
( 7.7010000000000005,0.3749417590159305 )
( 7.7509999999999994,0.37451252119130063 )
( 7.801,0.3740896910885533 )
( 7.851000000000001,0.3736731271161791 )
( 7.901,0.3732626918055502 )
( 7.9510000000000005,0.3728582516624447 )
( 8.001,0.3724596770249242 )
( 8.051,0.3720668419272526 )
( 8.101,0.37167962396955573 )
( 8.151,0.37129790419294617 )
( 8.201,0.3709215669598432 )
( 8.251,0.3705504998392429 )
( 8.301,0.37018459349669797 )
( 8.351,0.369823741588786 )
( 8.401,0.36946784066185323 )
( 8.451,0.36911679005483206 )
( 8.501,0.36877049180594884 )
( 8.551,0.3684288505631327 )
( 8.601,0.3680917734979618 )
( 8.651,0.3677591702229866 )
( 8.701,0.36743095271226905 )
( 8.751,0.36710703522500315 )
( 8.801,0.36678733423207144 )
( 8.851,0.36647176834540685 )
( 8.901,0.36616025825004284 )
( 8.951,0.3658527266387246 )
( 9.001,0.3655490981489757 )
( 9.051,0.3652492993025109 )
( 9.100999999999999,0.364953258446895 )
( 9.151,0.36466090569934984 )
( 9.201,0.36437217289262075 )
( 9.251,0.36408699352281076 )
( 9.301,0.3638053026991048 )
( 9.350999999999999,0.36352703709530143 )
( 9.401,0.36325213490307573 )
( 9.451,0.36298053578690664 )
( 9.501,0.36271218084059526 )
( 9.551,0.3624470125453113 )
( 9.600999999999999,0.36218497472910566 )
( 9.651,0.3619260125278279 )
( 9.701,0.36167007234739645 )
( 9.751,0.3614171018273613 )
( 9.801,0.3611670498057134 )
( 9.850999999999999,0.3609198662848881 )
( 9.901,0.3606755023989161 )
( 9.951,0.3604339103816797 )
          };
        \legend{
          {\small SFHE: $\pi^{-1}\Gamma\left(1+1/\alpha\right)$},
          {\small SFWE: $2^{1-1/\alpha}\pi^{-1}\alpha^{-1}\cos\left(\pi/\alpha\right)\Gamma\left(2(1/\alpha-1\right)$}
        };
        \addplot[thick, dotted] coordinates {(0,0.25)(0,2)};
        \addplot[thick, dotted] coordinates {(1,0.25)(1,2)};
        \addplot[thick, dotted] coordinates {(2,0.25)(2,1.0)};
        \addplot[thick, dotted] coordinates {(0.0,0.31813)(1.2,0.31813)};
        \addplot[thick, dotted] coordinates {(0.0,0.282095)(5.2,0.282095)};
        \addplot[thick, dotted] coordinates {(0.0,0.707)(2.2,0.707)};
      \end{axis}
    \end{tikzpicture}
    \bigskip

    \begin{tikzpicture}[scale=0.9, transform shape, x=3em, y=5em]
      \tikzset{>=latex}
      \begin{axis}[
        xmin=0.5,
        xmax=5.5,
        ymin=0,
        ytick={        0, 0.25,                         0.840896,                          1.1513,   1.5  },
        yticklabels={$0$, {\small SHE:} $\sfrac{1}{4}$, {\small SWE:} $2^{-\sfrac{1}{4}}$, $1.1513$, $1.5$},
        xtick={        1, 1.3426,   2,   3,   5  },
        xticklabels={$1$, $\rotatebox{90}{1.3426}$, $2$, $3$, $5$},
        legend style={at={(1.50,+1.00)}, anchor=north, legend cell align=left, draw=none},
        xlabel={$\alpha$},
        title={Second moment Lyapunov exponent}
        ]
        \addplot[domain=1.31:5.5, blue, solid, thick] coordinates {
            (1.3100000e+00,1.6627598e+00)
            (1.3310553e+00,1.2968472e+00)
            (1.3521106e+00,1.0521308e+00)
            (1.3731658e+00,8.8103162e-01)
            (1.3942211e+00,7.5695988e-01)
            (1.4152764e+00,6.6422272e-01)
            (1.4363317e+00,5.9312184e-01)
            (1.4573869e+00,5.3742290e-01)
            (1.4784422e+00,4.9297986e-01)
            (1.4994975e+00,4.5695276e-01)
            (1.5205528e+00,4.2734520e-01)
            (1.5416080e+00,4.0272137e-01)
            (1.5626633e+00,3.8202741e-01)
            (1.5837186e+00,3.6447564e-01)
            (1.6047739e+00,3.4946773e-01)
            (1.6258291e+00,3.3654239e-01)
            (1.6468844e+00,3.2533928e-01)
            (1.6679397e+00,3.1557348e-01)
            (1.6889950e+00,3.0701718e-01)
            (1.7100503e+00,2.9948643e-01)
            (1.7311055e+00,2.9283130e-01)
            (1.7521608e+00,2.8692852e-01)
            (1.7732161e+00,2.8167598e-01)
            (1.7942714e+00,2.7698842e-01)
            (1.8153266e+00,2.7279424e-01)
            (1.8363819e+00,2.6903287e-01)
            (1.8574372e+00,2.6565281e-01)
            (1.8784925e+00,2.6261000e-01)
            (1.8995477e+00,2.5986659e-01)
            (1.9206030e+00,2.5738987e-01)
            (1.9416583e+00,2.5515147e-01)
            (1.9627136e+00,2.5312668e-01)
            (1.9837688e+00,2.5129388e-01)
            (2.0048241e+00,2.4963409e-01)
            (2.0258794e+00,2.4813058e-01)
            (2.0469347e+00,2.4676859e-01)
            (2.0679899e+00,2.4553499e-01)
            (2.0890452e+00,2.4441813e-01)
            (2.1101005e+00,2.4340761e-01)
            (2.1311558e+00,2.4249414e-01)
            (2.1522111e+00,2.4166937e-01)
            (2.1732663e+00,2.4092581e-01)
            (2.1943216e+00,2.4025672e-01)
            (2.2153769e+00,2.3965600e-01)
            (2.2364322e+00,2.3911813e-01)
            (2.2574874e+00,2.3863813e-01)
            (2.2785427e+00,2.3821145e-01)
            (2.2995980e+00,2.3783396e-01)
            (2.3206533e+00,2.3750191e-01)
            (2.3417085e+00,2.3721185e-01)
            (2.3627638e+00,2.3696066e-01)
            (2.3838191e+00,2.3674545e-01)
            (2.4048744e+00,2.3656358e-01)
            (2.4259296e+00,2.3641264e-01)
            (2.4469849e+00,2.3629040e-01)
            (2.4680402e+00,2.3619481e-01)
            (2.4890955e+00,2.3612397e-01)
            (2.5101508e+00,2.3607616e-01)
            (2.5312060e+00,2.3604975e-01)
            (2.5522613e+00,2.3604326e-01)
            (2.5733166e+00,2.3605531e-01)
            (2.5943719e+00,2.3608461e-01)
            (2.6154271e+00,2.3612999e-01)
            (2.6364824e+00,2.3619034e-01)
            (2.6575377e+00,2.3626465e-01)
            (2.6785930e+00,2.3635195e-01)
            (2.6996482e+00,2.3645136e-01)
            (2.7207035e+00,2.3656207e-01)
            (2.7417588e+00,2.3668329e-01)
            (2.7628141e+00,2.3681432e-01)
            (2.7838693e+00,2.3695448e-01)
            (2.8049246e+00,2.3710315e-01)
            (2.8259799e+00,2.3725974e-01)
            (2.8470352e+00,2.3742371e-01)
            (2.8680905e+00,2.3759454e-01)
            (2.8891457e+00,2.3777176e-01)
            (2.9102010e+00,2.3795491e-01)
            (2.9312563e+00,2.3814358e-01)
            (2.9523116e+00,2.3833737e-01)
            (2.9733668e+00,2.3853591e-01)
            (2.9944221e+00,2.3873886e-01)
            (3.0154774e+00,2.3894588e-01)
            (3.0365327e+00,2.3915667e-01)
            (3.0575879e+00,2.3937095e-01)
            (3.0786432e+00,2.3958844e-01)
            (3.0996985e+00,2.3980889e-01)
            (3.1207538e+00,2.4003205e-01)
            (3.1418090e+00,2.4025770e-01)
            (3.1628643e+00,2.4048564e-01)
            (3.1839196e+00,2.4071566e-01)
            (3.2049749e+00,2.4094757e-01)
            (3.2260302e+00,2.4118119e-01)
            (3.2470854e+00,2.4141636e-01)
            (3.2681407e+00,2.4165292e-01)
            (3.2891960e+00,2.4189073e-01)
            (3.3102513e+00,2.4212963e-01)
            (3.3313065e+00,2.4236951e-01)
            (3.3523618e+00,2.4261023e-01)
            (3.3734171e+00,2.4285169e-01)
            (3.3944724e+00,2.4309376e-01)
            (3.4155276e+00,2.4333634e-01)
            (3.4365829e+00,2.4357935e-01)
            (3.4576382e+00,2.4382267e-01)
            (3.4786935e+00,2.4406623e-01)
            (3.4997487e+00,2.4430995e-01)
            (3.5208040e+00,2.4455374e-01)
            (3.5418593e+00,2.4479753e-01)
            (3.5629146e+00,2.4504126e-01)
            (3.5839698e+00,2.4528485e-01)
            (3.6050251e+00,2.4552825e-01)
            (3.6260804e+00,2.4577140e-01)
            (3.6471357e+00,2.4601424e-01)
            (3.6681910e+00,2.4625672e-01)
            (3.6892462e+00,2.4649880e-01)
            (3.7103015e+00,2.4674042e-01)
            (3.7313568e+00,2.4698156e-01)
            (3.7524121e+00,2.4722215e-01)
            (3.7734673e+00,2.4746217e-01)
            (3.7945226e+00,2.4770159e-01)
            (3.8155779e+00,2.4794036e-01)
            (3.8366332e+00,2.4817846e-01)
            (3.8576884e+00,2.4841585e-01)
            (3.8787437e+00,2.4865252e-01)
            (3.8997990e+00,2.4888842e-01)
            (3.9208543e+00,2.4912354e-01)
            (3.9419095e+00,2.4935786e-01)
            (3.9629648e+00,2.4959136e-01)
            (3.9840201e+00,2.4982401e-01)
            (4.0050754e+00,2.5005579e-01)
            (4.0261307e+00,2.5028669e-01)
            (4.0471859e+00,2.5051670e-01)
            (4.0682412e+00,2.5074579e-01)
            (4.0892965e+00,2.5097396e-01)
            (4.1103518e+00,2.5120119e-01)
            (4.1314070e+00,2.5142747e-01)
            (4.1524623e+00,2.5165278e-01)
            (4.1735176e+00,2.5187712e-01)
            (4.1945729e+00,2.5210049e-01)
            (4.2156281e+00,2.5232286e-01)
            (4.2366834e+00,2.5254424e-01)
            (4.2577387e+00,2.5276461e-01)
            (4.2787940e+00,2.5298397e-01)
            (4.2998492e+00,2.5320232e-01)
            (4.3209045e+00,2.5341964e-01)
            (4.3419598e+00,2.5363594e-01)
            (4.3630151e+00,2.5385122e-01)
            (4.3840704e+00,2.5406546e-01)
            (4.4051256e+00,2.5427866e-01)
            (4.4261809e+00,2.5449083e-01)
            (4.4472362e+00,2.5470196e-01)
            (4.4682915e+00,2.5491204e-01)
            (4.4893467e+00,2.5512109e-01)
            (4.5104020e+00,2.5532909e-01)
            (4.5314573e+00,2.5553605e-01)
            (4.5525126e+00,2.5574197e-01)
            (4.5735678e+00,2.5594685e-01)
            (4.5946231e+00,2.5615068e-01)
            (4.6156784e+00,2.5635347e-01)
            (4.6367337e+00,2.5655523e-01)
            (4.6577889e+00,2.5675594e-01)
            (4.6788442e+00,2.5695562e-01)
            (4.6998995e+00,2.5715427e-01)
            (4.7209548e+00,2.5735188e-01)
            (4.7420101e+00,2.5754846e-01)
            (4.7630653e+00,2.5774402e-01)
            (4.7841206e+00,2.5793855e-01)
            (4.8051759e+00,2.5813206e-01)
            (4.8262312e+00,2.5832455e-01)
            (4.8472864e+00,2.5851603e-01)
            (4.8683417e+00,2.5870650e-01)
            (4.8893970e+00,2.5889596e-01)
            (4.9104523e+00,2.5908441e-01)
            (4.9315075e+00,2.5927186e-01)
            (4.9525628e+00,2.5945832e-01)
            (4.9736181e+00,2.5964379e-01)
            (4.9946734e+00,2.5982827e-01)
            (5.0157286e+00,2.6001176e-01)
            (5.0367839e+00,2.6019428e-01)
            (5.0578392e+00,2.6037582e-01)
            (5.0788945e+00,2.6055639e-01)
            (5.0999497e+00,2.6073599e-01)
            (5.1210050e+00,2.6091464e-01)
            (5.1420603e+00,2.6109233e-01)
            (5.1631156e+00,2.6126906e-01)
            (5.1841709e+00,2.6144485e-01)
            (5.2052261e+00,2.6161970e-01)
            (5.2262814e+00,2.6179361e-01)
            (5.2473367e+00,2.6196659e-01)
            (5.2683920e+00,2.6213864e-01)
            (5.2894472e+00,2.6230977e-01)
            (5.3105025e+00,2.6247998e-01)
            (5.3315578e+00,2.6264928e-01)
            (5.3526131e+00,2.6281768e-01)
            (5.3736683e+00,2.6298517e-01)
            (5.3947236e+00,2.6315177e-01)
            (5.4157789e+00,2.6331748e-01)
            (5.4368342e+00,2.6348230e-01)
            (5.4578894e+00,2.6364624e-01)
            (5.4789447e+00,2.6380930e-01)
            (5.5000000e+00,2.6397149e-01)
          };
        \addplot[domain=1.31:5.5, red, dashed, thick] coordinates {
            (1.1900000e+00,1.6603830e+00)
            (1.2116583e+00,1.5368180e+00)
            (1.2333166e+00,1.4386716e+00)
            (1.2549749e+00,1.3591361e+00)
            (1.2766332e+00,1.2936034e+00)
            (1.2982915e+00,1.2388491e+00)
            (1.3199497e+00,1.1925530e+00)
            (1.3416080e+00,1.1530050e+00)
            (1.3632663e+00,1.1189192e+00)
            (1.3849246e+00,1.0893103e+00)
            (1.4065829e+00,1.0634120e+00)
            (1.4282412e+00,1.0406196e+00)
            (1.4498995e+00,1.0204500e+00)
            (1.4715578e+00,1.0025131e+00)
            (1.4932161e+00,9.8649024e-01)
            (1.5148744e+00,9.7211914e-01)
            (1.5365327e+00,9.5918195e-01)
            (1.5581910e+00,9.4749633e-01)
            (1.5798492e+00,9.3690861e-01)
            (1.6015075e+00,9.2728843e-01)
            (1.6231658e+00,9.1852456e-01)
            (1.6448241e+00,9.1052154e-01)
            (1.6664824e+00,9.0319705e-01)
            (1.6881407e+00,8.9647974e-01)
            (1.7097990e+00,8.9030749e-01)
            (1.7314573e+00,8.8462601e-01)
            (1.7531156e+00,8.7938765e-01)
            (1.7747739e+00,8.7455043e-01)
            (1.7964322e+00,8.7007726e-01)
            (1.8180905e+00,8.6593524e-01)
            (1.8397487e+00,8.6209509e-01)
            (1.8614070e+00,8.5853071e-01)
            (1.8830653e+00,8.5521875e-01)
            (1.9047236e+00,8.5213826e-01)
            (1.9263819e+00,8.4927039e-01)
            (1.9480402e+00,8.4659818e-01)
            (1.9696985e+00,8.4410628e-01)
            (1.9913568e+00,8.4178083e-01)
            (2.0130151e+00,8.3960922e-01)
            (2.0346734e+00,8.3758000e-01)
            (2.0563317e+00,8.3568278e-01)
            (2.0779899e+00,8.3390802e-01)
            (2.0996482e+00,8.3224706e-01)
            (2.1213065e+00,8.3069195e-01)
            (2.1429648e+00,8.2923539e-01)
            (2.1646231e+00,8.2787070e-01)
            (2.1862814e+00,8.2659174e-01)
            (2.2079397e+00,8.2539284e-01)
            (2.2295980e+00,8.2426878e-01)
            (2.2512563e+00,8.2321475e-01)
            (2.2729146e+00,8.2222630e-01)
            (2.2945729e+00,8.2129932e-01)
            (2.3162312e+00,8.2042997e-01)
            (2.3378894e+00,8.1961474e-01)
            (2.3595477e+00,8.1885034e-01)
            (2.3812060e+00,8.1813370e-01)
            (2.4028643e+00,8.1746201e-01)
            (2.4245226e+00,8.1683261e-01)
            (2.4461809e+00,8.1624304e-01)
            (2.4678392e+00,8.1569100e-01)
            (2.4894975e+00,8.1517436e-01)
            (2.5111558e+00,8.1469109e-01)
            (2.5328141e+00,8.1423934e-01)
            (2.5544724e+00,8.1381735e-01)
            (2.5761307e+00,8.1342348e-01)
            (2.5977889e+00,8.1305617e-01)
            (2.6194472e+00,8.1271400e-01)
            (2.6411055e+00,8.1239561e-01)
            (2.6627638e+00,8.1209972e-01)
            (2.6844221e+00,8.1182514e-01)
            (2.7060804e+00,8.1157073e-01)
            (2.7277387e+00,8.1133545e-01)
            (2.7493970e+00,8.1111829e-01)
            (2.7710553e+00,8.1091832e-01)
            (2.7927136e+00,8.1073465e-01)
            (2.8143719e+00,8.1056644e-01)
            (2.8360302e+00,8.1041291e-01)
            (2.8576884e+00,8.1027331e-01)
            (2.8793467e+00,8.1014694e-01)
            (2.9010050e+00,8.1003312e-01)
            (2.9226633e+00,8.0993125e-01)
            (2.9443216e+00,8.0984071e-01)
            (2.9659799e+00,8.0976095e-01)
            (2.9876382e+00,8.0969143e-01)
            (3.0092965e+00,8.0963165e-01)
            (3.0309548e+00,8.0958113e-01)
            (3.0526131e+00,8.0953942e-01)
            (3.0742714e+00,8.0950608e-01)
            (3.0959296e+00,8.0948071e-01)
            (3.1175879e+00,8.0946291e-01)
            (3.1392462e+00,8.0945233e-01)
            (3.1609045e+00,8.0944861e-01)
            (3.1825628e+00,8.0945141e-01)
            (3.2042211e+00,8.0946043e-01)
            (3.2258794e+00,8.0947536e-01)
            (3.2475377e+00,8.0949591e-01)
            (3.2691960e+00,8.0952182e-01)
            (3.2908543e+00,8.0955283e-01)
            (3.3125126e+00,8.0958868e-01)
            (3.3341709e+00,8.0962915e-01)
            (3.3558291e+00,8.0967401e-01)
            (3.3774874e+00,8.0972304e-01)
            (3.3991457e+00,8.0977605e-01)
            (3.4208040e+00,8.0983284e-01)
            (3.4424623e+00,8.0989323e-01)
            (3.4641206e+00,8.0995703e-01)
            (3.4857789e+00,8.1002409e-01)
            (3.5074372e+00,8.1009423e-01)
            (3.5290955e+00,8.1016732e-01)
            (3.5507538e+00,8.1024320e-01)
            (3.5724121e+00,8.1032172e-01)
            (3.5940704e+00,8.1040277e-01)
            (3.6157286e+00,8.1048621e-01)
            (3.6373869e+00,8.1057192e-01)
            (3.6590452e+00,8.1065978e-01)
            (3.6807035e+00,8.1074968e-01)
            (3.7023618e+00,8.1084152e-01)
            (3.7240201e+00,8.1093519e-01)
            (3.7456784e+00,8.1103060e-01)
            (3.7673367e+00,8.1112765e-01)
            (3.7889950e+00,8.1122626e-01)
            (3.8106533e+00,8.1132634e-01)
            (3.8323116e+00,8.1142780e-01)
            (3.8539698e+00,8.1153058e-01)
            (3.8756281e+00,8.1163459e-01)
            (3.8972864e+00,8.1173977e-01)
            (3.9189447e+00,8.1184604e-01)
            (3.9406030e+00,8.1195334e-01)
            (3.9622613e+00,8.1206161e-01)
            (3.9839196e+00,8.1217078e-01)
            (4.0055779e+00,8.1228081e-01)
            (4.0272362e+00,8.1239163e-01)
            (4.0488945e+00,8.1250320e-01)
            (4.0705528e+00,8.1261545e-01)
            (4.0922111e+00,8.1272835e-01)
            (4.1138693e+00,8.1284184e-01)
            (4.1355276e+00,8.1295589e-01)
            (4.1571859e+00,8.1307045e-01)
            (4.1788442e+00,8.1318547e-01)
            (4.2005025e+00,8.1330093e-01)
            (4.2221608e+00,8.1341677e-01)
            (4.2438191e+00,8.1353298e-01)
            (4.2654774e+00,8.1364950e-01)
            (4.2871357e+00,8.1376632e-01)
            (4.3087940e+00,8.1388339e-01)
            (4.3304523e+00,8.1400068e-01)
            (4.3521106e+00,8.1411818e-01)
            (4.3737688e+00,8.1423585e-01)
            (4.3954271e+00,8.1435366e-01)
            (4.4170854e+00,8.1447158e-01)
            (4.4387437e+00,8.1458960e-01)
            (4.4604020e+00,8.1470769e-01)
            (4.4820603e+00,8.1482583e-01)
            (4.5037186e+00,8.1494399e-01)
            (4.5253769e+00,8.1506216e-01)
            (4.5470352e+00,8.1518032e-01)
            (4.5686935e+00,8.1529844e-01)
            (4.5903518e+00,8.1541651e-01)
            (4.6120101e+00,8.1553451e-01)
            (4.6336683e+00,8.1565242e-01)
            (4.6553266e+00,8.1577023e-01)
            (4.6769849e+00,8.1588793e-01)
            (4.6986432e+00,8.1600549e-01)
            (4.7203015e+00,8.1612291e-01)
            (4.7419598e+00,8.1624018e-01)
            (4.7636181e+00,8.1635727e-01)
            (4.7852764e+00,8.1647417e-01)
            (4.8069347e+00,8.1659088e-01)
            (4.8285930e+00,8.1670739e-01)
            (4.8502513e+00,8.1682368e-01)
            (4.8719095e+00,8.1693974e-01)
            (4.8935678e+00,8.1705556e-01)
            (4.9152261e+00,8.1717114e-01)
            (4.9368844e+00,8.1728647e-01)
            (4.9585427e+00,8.1740153e-01)
            (4.9802010e+00,8.1751632e-01)
            (5.0018593e+00,8.1763083e-01)
            (5.0235176e+00,8.1774505e-01)
            (5.0451759e+00,8.1785898e-01)
            (5.0668342e+00,8.1797261e-01)
            (5.0884925e+00,8.1808594e-01)
            (5.1101508e+00,8.1819895e-01)
            (5.1318090e+00,8.1831165e-01)
            (5.1534673e+00,8.1842402e-01)
            (5.1751256e+00,8.1853606e-01)
            (5.1967839e+00,8.1864777e-01)
            (5.2184422e+00,8.1875915e-01)
            (5.2401005e+00,8.1887018e-01)
            (5.2617588e+00,8.1898087e-01)
            (5.2834171e+00,8.1909120e-01)
            (5.3050754e+00,8.1920118e-01)
            (5.3267337e+00,8.1931081e-01)
            (5.3483920e+00,8.1942007e-01)
            (5.3700503e+00,8.1952897e-01)
            (5.3917085e+00,8.1963751e-01)
            (5.4133668e+00,8.1974568e-01)
            (5.4350251e+00,8.1985347e-01)
            (5.4566834e+00,8.1996089e-01)
            (5.4783417e+00,8.2006794e-01)
            (5.5000000e+00,8.2017461e-01)
          };
        \legend{
          {\small SFHE: $\left[ \alpha\sin\left(\pi/\alpha\right)\right ]^{\frac{\alpha}{1-\alpha}}$},
          {\small SFWE: $2^{\frac{1-\alpha}{2-3\alpha}}\left[ \alpha\sin\left(\pi/\alpha\right)\right ]^{\frac{\alpha}{2-3\alpha}}$}
        };
        \addplot[thick, dotted] coordinates {(2,0.1)(2,1)};
        \addplot[thick, dotted] coordinates {(2.2,0.25)(-0.5,0.25)} node [left] {$1/4$};
        \addplot[thick, dotted] coordinates {(2.2,0.840896)(-0.5,0.840896)} node [left] {$1/4$};
        \addplot[thick, dotted] coordinates {(1.0,0.0) (1.0,1.66)};
        \addplot[thick, dotted] coordinates {(1.3426,0.0) (1.3426,1.4)};
        \addplot[thick, dotted] coordinates {(0,1.1513) (1.5,1.1513)};
      \end{axis}
    \end{tikzpicture}
  \end{center}

  \caption{Plots of both $\Theta_{\alpha,\nu}$ and the second moment Lyapunov exponents as functions
  of $\alpha\in (1,\infty)$ with $\nu=\lambda=1$ for both SFHE in Example \ref{Ex:SFHE} and SFWE in
Example \ref{Ex:SFWE}. For the second moment Lyapunov exponents, two curves intersect at
$\left(1.3426,1.1513\right)$ via some numerical solver.}

  \label{F:SFHE}
\end{figure}

%
%
%

\begin{example}[SFWE] \label{Ex:SFWE}
  For the stochastic fractional wave equation
  \begin{equation}\label{E:SFWE}
  \text{(SFWE)} \quad
  \begin{cases}
    \left(\dfrac{\partial^2}{\partial t^2}+\dfrac{\nu}{2}\left(-\Delta\right)^{\alpha / 2}\right) u(t, x)= \: \lambda u(t, x) \dot{W}(t, x) , & t>0, x \in \mathbb{R}, \\
    u(0,\cdot)=u_0, \quad \dfrac{\partial}{\partial t} u(0, \cdot)=u_1,
  \end{cases}
\end{equation}
   i.e., $\alpha>0$, $\beta=2$, $\gamma=0$ and $d=1$, Dalang's condition \eqref{E:Dalang'} becomes
   $\alpha>1$, and the quantities in \eqref{E:theta} reduce to
  \begin{align*}
    \theta              = & 2(1-1/\alpha), \\
    \Theta_{\alpha,\nu} = & \frac{1}{\pi}\int_0^\infty \frac{\sin^2\left(\sqrt{\nu/2}\: \xi^{\alpha/2}\right)}{\left(\nu/2\right)\xi^\alpha}\ud \xi
                        = \frac{2^{2-1/\alpha}\cos\left(\pi/\alpha\right) \Gamma\left(2\left(1/\alpha-1\right)\right)}{\nu^{1/\alpha}\pi\alpha}, \\
    \widehat{t}         = & \frac{2^{2-1/\alpha} \cos\left(\pi/\alpha\right) \Gamma\left(3-2/\alpha\right) \Gamma\left(2/\alpha-2\right)}{\nu^{1/\alpha}\pi\alpha} t^{3-2/\alpha} \\
                        = & \frac{2^{2-1/\alpha} \cos\left(\pi/\alpha\right)}{\nu^{1/\alpha}\sin\left(2\pi/\alpha\right)\alpha} t^{3-2/\alpha}
                        = \frac{2^{1-1/\alpha}}{\nu^{1/\alpha}\sin\left(\pi/\alpha\right)\alpha} t^{3-2/\alpha},\\
    t_p                 = & p^{1+\alpha/(3\alpha-2)}t,
  \end{align*}
  where we have applied Lemma \ref{L:sin} and the reflection formula \eqref{E:Reflection} in
  computing $\Theta$ and $\widehat{t}$, respectively.
  \begin{enumerate}[wide=1em]
    \item {\it Second moment formula:} By \eqref{E:SecMom}, the second moment formula is
      \begin{equation}\label{E:2ndSFWE}
        \begin{split}
           \E\left[u^2(t,x)\right] =
           & u_0^2       \:E_{3-2/\alpha}  \left(\frac{2^{1-1/\alpha}\lambda^2}{\nu^{1/\alpha}\sin\left(\pi/\alpha\right)\alpha} t^{3-2/\alpha}\: \right) \\
           & + 2u_0u_1 t \:E_{3-2/\alpha,2}\left(\frac{2^{1-1/\alpha}\lambda^2}{\nu^{1/\alpha}\sin\left(\pi/\alpha\right)\alpha} t^{3-2/\alpha}\: \right) \\
           & + 2u_1^2 t^2\:E_{3-2/\alpha,3}\left(\frac{2^{1-1/\alpha}\lambda^2}{\nu^{1/\alpha}\sin\left(\pi/\alpha\right)\alpha} t^{3-2/\alpha}\: \right).
        \end{split}
      \end{equation}
    \item {\it Second moment Lyapunov exponent:} From \eqref{E:2ndSFWE}, we immediately see that
      \begin{align} \label{E:LySFWE}
          \lim_{t\to \infty} t^{-1} \log \E\left[u(t,x)^2\right]
        = \left(\frac{2^{1-1/\alpha}\lambda^2}{\nu^{1/\alpha}\sin\left(\pi/\alpha\right)\alpha} \right)^{\alpha/(3\alpha-2)},
        \qquad \alpha>1;
      \end{align}
      see Figure \ref{F:SFHE} for a plot of this expression as a function of $\alpha$.
    \item {\it Moment asymptotics:} The asymptotics in \eqref{E:upper-lim} shows that
      \begin{subequations} \label{E:AsymSFWE}
        \begin{align} \label{E:AsymSFWE-t}
             & \limsup_{t\to \infty} t^{-1} \log \E[|u(t,x)|^p]
           \le C_2 p^{\frac{4\alpha-2}{3\alpha-2}}, & p\ge 2, \\
             &
               \limsup_{p\to \infty} p^{-\frac{4\alpha-2}{3\alpha-2}} \log \E[|u(t,x)|^p]
           \le C_4 \: t, & t>0. \label{E:AsymSFWE-p}
        \end{align}
      \end{subequations}
      The large-time asymptotics in \eqref{E:AsymSFWE-t} is consistent with Proposition 4.1 of
      \cite{song.song.ea:20:fractional}. Since we don't know if the fundamental solution is
      nonnegative, we cannot apply the lower asymptotics in \eqref{E:lower-lim}. To the best of our
      knowledge, formulas \eqref{E:2ndSFWE} and \eqref{E:LySFWE} and the limit \eqref{E:AsymSFWE}
      are new.
  \end{enumerate}
\end{example}

\begin{example} \label{Ex:TFSPDE}
  The following one-parameter family of SPDEs
  \begin{equation}\label{E:TFSPDE}
  \begin{cases}
    \left(\partial_t^{\beta}-\dfrac{\nu}{2}\dfrac{\partial^2}{\partial x^2}\right) u(t, x)= \: I_{t}^{\Ceil{\beta}-\beta}\left[\lambda u(t, x) \dot{W}(t, x)\right] , & t>0, x \in \mathbb{R},       \\
    u(0,\cdot)=u_0,                                                                                                                       & \text { if } \beta \in(0,1], \\
    u(0,\cdot)=u_0, \quad \dfrac{\partial}{\partial t} u(0, \cdot)=u_1,                                                                   & \text { if } \beta \in(1,2),
  \end{cases}
  \end{equation}
has been studied in \cite{chen:17:nonlinear}. This is the case when $d=1$,
    $\alpha=2$, $\beta\in (0,2)$ and $\gamma=\Ceil{\beta}-\beta$ and the upper bound of the
    large-time asymptotics was obtained ({\it ibid.}). It can be easily checked that Dalang's
    condition \eqref{E:Dalang'} holds true for all $\beta\in(0,2)$ in this case. The quantities in
  \eqref{E:theta} reduce to
  \begin{align*}
    \theta             & = 2\left(\Ceil{\beta}-1\right) -\beta/2,                                                       & t_p         & = p^{1+\frac{2}{4\Ceil{\beta}-2-\beta}} t, \\
    \Theta_{\beta,\nu} & = \frac{1}{\pi}\int_0^\infty E_{\beta,\Ceil{\beta}}^2 \left(-\frac{\nu}{2}\xi^2\right) \ud\xi, & \widehat{t} & = \Theta_{\beta,\nu} \Gamma\left(2\Ceil{\beta}-1-\beta/2\right) t^{2\Ceil{\beta}-1 -\beta/2},
  \end{align*}
  and hence we have the following:
  \begin{enumerate}[wide=1em]
    \item {\it Second moment formula:} By \eqref{E:SecMom}, the second moment formula is
      \begin{align} \label{E:2ndTFSPDE}
        \E\left[u^2(t,x)\right] =
        \begin{cases}
          \quad \quad u_0^2 \:E_{1-\beta/2}  \left(\lambda^2 \Theta_{\beta,\nu} \Gamma\left(1-\beta/2\right) t^{1 -\beta/2}\right) & \text{if $\beta\in (0,1]$},\\[1em]
        \begin{aligned}
           &  & u_0^2          \:E_{3 -\beta/2}   \left(\lambda^2 \Theta_{\beta,\nu} \Gamma\left(3-\beta/2\right) t^{3-\beta/2}\right) \\
           &  & + 2u_0u_1\: t  \:E_{3 -\beta/2,2} \left(\lambda^2 \Theta_{\beta,\nu} \Gamma\left(3-\beta/2\right) t^{3-\beta/2}\right) \\
           &  & + 2u_1^2 \: t^2\:E_{3 -\beta/2,3} \left(\lambda^2 \Theta_{\beta,\nu} \Gamma\left(3-\beta/2\right) t^{3-\beta/2}\right)
        \end{aligned} & \text{if $\beta\in (1,2)$}.
        \end{cases}
      \end{align}
    \item {\it Second moment Lyapunov exponent:} From \eqref{E:2ndSFWE}, we see that
      \begin{align} \label{E:LyTFSPDE}
          \lim_{t\to \infty} t^{-1} \log \E\left[u(t,x)^2\right]
          = \left(\lambda^2 \Theta_{\beta,\nu} \Gamma\left(2\Ceil{\beta}-1-\beta/2\right)\right)^{\frac{2}{4\Ceil{\beta}-2-\beta}}.
      \end{align}
    \item {\it Moment asymptotics:} Since the fundamental solution in this case is nonnegative (see
      Remark \ref{R:Nonneg} below), we can combine the asymptotics in both \eqref{E:upper-lim} and
      \eqref{E:lower-lim} to see that
      \begin{subequations} \label{E:AsymTFSPDE}
        \begin{align} \label{E:AsymTFSPDE-t}
               \hspace{-2em}
               C_1 p^{\frac{4\Ceil{\beta}-\beta}{4\Ceil{\beta}-2-\beta}}
           \le & \liminf_{t\to \infty} \frac{\log \E[u(t,x)^p]}{t}
           \le   \limsup_{t\to \infty} \frac{\log \E[u(t,x)^p]}{t}
           \le C_2 p^{\frac{4\Ceil{\beta}-\beta}{4\Ceil{\beta}-2-\beta}}, \quad  p\ge 2, \\
               C_3 t
           \le & \liminf_{p\to \infty} \frac{\log \E[u(t,x)^p]}{p^{\frac{4\Ceil{\beta}-\beta}{4\Ceil{\beta}-2-\beta}}}
           \le   \limsup_{p\to \infty} \frac{\log \E[u(t,x)^p]}{p^{\frac{4\Ceil{\beta}-\beta}{4\Ceil{\beta}-2-\beta}}}
           \le C_4 t, \quad t>0. \label{E:AsymTFSPDE-p}
        \end{align}
      \end{subequations}
      The upper bound for the large-time asymptotics in \eqref{E:AsymTFSPDE-t} recover the results
      obtained in \cite{chen:17:nonlinear}; see Theorems 3.5 and 3.6 ({\it ibid.}). In particular,
      when $\beta\in (0,1]$, Mijena and Nane \cite[Theorem 2]{mijena.nane:15:space-time} obtained
      the same upper bound as in \eqref{E:AsymTFSPDE-t}. Except the upper bound
      in \eqref{E:AsymTFSPDE-t}, all the rest results in this example are new.
  \end{enumerate}

  In Figure \ref{F:TFSPDE}, we plot the graphs of $\theta$, $\Theta_{\beta,\nu}$, $1+1/(1+\theta)$,
  and the second moment Lyapunov exponent as functions of $\beta$ with $\lambda$ and $\nu$ being set
  to $1$ and $2$, respectively.
\end{example}

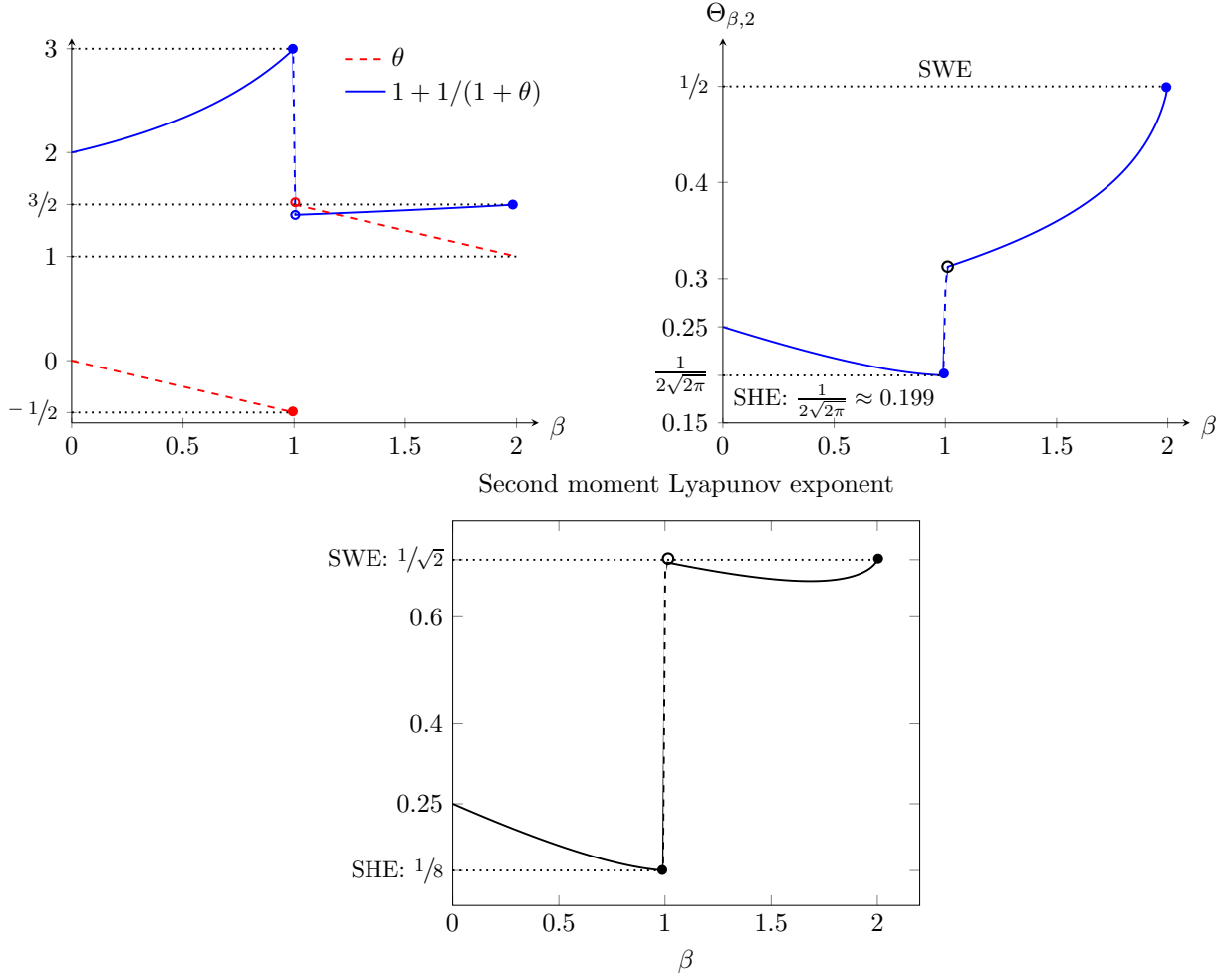
\begin{figure}[htpb]
  \begin{center}
    \hspace{-1em}
    \begin{tikzpicture}[scale=0.9, transform shape, x=1em, y=1em]
      \tikzset{>=latex}
      \begin{axis}[
        axis lines = left,
        ytick={     -0.5,             0,   1,   1.5,            2,   3},
        yticklabels={$\sfrac{-1}{2}$, $0$, $1$, $\sfrac{3}{2}$, $2$, $3$},
        legend style={at={(0.8,+1.00)}, anchor=north, legend cell align=left, draw=none},
        xmin=0,   xmax= 2.1,
        ymax=3.1, ymin=-0.6,
        xlabel={$\beta$},  xlabel style={at=(current axis.right of origin), anchor=center,xshift=2.5em, yshift = -1.0em},
        ]
        \addplot[thick, red, dashed, thick] coordinates {(0,0)(1,-0.5)};
        \addplot[domain=0.001:2, blue, solid, thick] coordinates {
            (1.0000000e-03,2.0005003e+00)
            (1.1045226e-02,2.0055533e+00)
            (2.1090452e-02,2.0106576e+00)
            (3.1135678e-02,2.0158140e+00)
            (4.1180905e-02,2.0210233e+00)
            (5.1226131e-02,2.0262863e+00)
            (6.1271357e-02,2.0316039e+00)
            (7.1316583e-02,2.0369768e+00)
            (8.1361809e-02,2.0424060e+00)
            (9.1407035e-02,2.0478924e+00)
            (1.0145226e-01,2.0534368e+00)
            (1.1149749e-01,2.0590402e+00)
            (1.2154271e-01,2.0647035e+00)
            (1.3158794e-01,2.0704277e+00)
            (1.4163317e-01,2.0762138e+00)
            (1.5167839e-01,2.0820628e+00)
            (1.6172362e-01,2.0879757e+00)
            (1.7176884e-01,2.0939536e+00)
            (1.8181407e-01,2.0999975e+00)
            (1.9185930e-01,2.1061086e+00)
            (2.0190452e-01,2.1122880e+00)
            (2.1194975e-01,2.1185368e+00)
            (2.2199497e-01,2.1248562e+00)
            (2.3204020e-01,2.1312474e+00)
            (2.4208543e-01,2.1377117e+00)
            (2.5213065e-01,2.1442503e+00)
            (2.6217588e-01,2.1508644e+00)
            (2.7222111e-01,2.1575555e+00)
            (2.8226633e-01,2.1643249e+00)
            (2.9231156e-01,2.1711738e+00)
            (3.0235678e-01,2.1781038e+00)
            (3.1240201e-01,2.1851164e+00)
            (3.2244724e-01,2.1922129e+00)
            (3.3249246e-01,2.1993949e+00)
            (3.4253769e-01,2.2066639e+00)
            (3.5258291e-01,2.2140216e+00)
            (3.6262814e-01,2.2214696e+00)
            (3.7267337e-01,2.2290096e+00)
            (3.8271859e-01,2.2366432e+00)
            (3.9276382e-01,2.2443722e+00)
            (4.0280905e-01,2.2521984e+00)
            (4.1285427e-01,2.2601237e+00)
            (4.2289950e-01,2.2681500e+00)
            (4.3294472e-01,2.2762792e+00)
            (4.4298995e-01,2.2845132e+00)
            (4.5303518e-01,2.2928542e+00)
            (4.6308040e-01,2.3013042e+00)
            (4.7312563e-01,2.3098655e+00)
            (4.8317085e-01,2.3185401e+00)
            (4.9321608e-01,2.3273303e+00)
            (5.0326131e-01,2.3362386e+00)
            (5.1330653e-01,2.3452672e+00)
            (5.2335176e-01,2.3544187e+00)
            (5.3339698e-01,2.3636955e+00)
            (5.4344221e-01,2.3731003e+00)
            (5.5348744e-01,2.3826358e+00)
            (5.6353266e-01,2.3923045e+00)
            (5.7357789e-01,2.4021095e+00)
            (5.8362312e-01,2.4120535e+00)
            (5.9366834e-01,2.4221396e+00)
            (6.0371357e-01,2.4323709e+00)
            (6.1375879e-01,2.4427504e+00)
            (6.2380402e-01,2.4532814e+00)
            (6.3384925e-01,2.4639673e+00)
            (6.4389447e-01,2.4748115e+00)
            (6.5393970e-01,2.4858175e+00)
            (6.6398492e-01,2.4969891e+00)
            (6.7403015e-01,2.5083299e+00)
            (6.8407538e-01,2.5198439e+00)
            (6.9412060e-01,2.5315350e+00)
            (7.0416583e-01,2.5434074e+00)
            (7.1421106e-01,2.5554652e+00)
            (7.2425628e-01,2.5677130e+00)
            (7.3430151e-01,2.5801552e+00)
            (7.4434673e-01,2.5927964e+00)
            (7.5439196e-01,2.6056415e+00)
            (7.6443719e-01,2.6186955e+00)
            (7.7448241e-01,2.6319635e+00)
            (7.8452764e-01,2.6454508e+00)
            (7.9457286e-01,2.6591629e+00)
            (8.0461809e-01,2.6731055e+00)
            (8.1466332e-01,2.6872843e+00)
            (8.2470854e-01,2.7017056e+00)
            (8.3475377e-01,2.7163754e+00)
            (8.4479899e-01,2.7313004e+00)
            (8.5484422e-01,2.7464873e+00)
            (8.6488945e-01,2.7619429e+00)
            (8.7493467e-01,2.7776746e+00)
            (8.8497990e-01,2.7936896e+00)
            (8.9502513e-01,2.8099959e+00)
            (9.0507035e-01,2.8266014e+00)
            (9.1511558e-01,2.8435143e+00)
            (9.2516080e-01,2.8607435e+00)
            (9.3520603e-01,2.8782976e+00)
            (9.4525126e-01,2.8961862e+00)
            (9.5529648e-01,2.9144187e+00)
            (9.6534171e-01,2.9330053e+00)
            (9.7538693e-01,2.9519564e+00)
            (9.8543216e-01,2.9712827e+00)
            (9.9547739e-01,2.9909955e+00)
            (1.0055226e+00,1.4004423e+00)
            (1.0155678e+00,1.4012493e+00)
            (1.0256131e+00,1.4020596e+00)
            (1.0356583e+00,1.4028732e+00)
            (1.0457035e+00,1.4036900e+00)
            (1.0557487e+00,1.4045102e+00)
            (1.0657940e+00,1.4053337e+00)
            (1.0758392e+00,1.4061606e+00)
            (1.0858844e+00,1.4069908e+00)
            (1.0959296e+00,1.4078245e+00)
            (1.1059749e+00,1.4086616e+00)
            (1.1160201e+00,1.4095021e+00)
            (1.1260653e+00,1.4103461e+00)
            (1.1361106e+00,1.4111936e+00)
            (1.1461558e+00,1.4120445e+00)
            (1.1562010e+00,1.4128990e+00)
            (1.1662462e+00,1.4137571e+00)
            (1.1762915e+00,1.4146187e+00)
            (1.1863367e+00,1.4154840e+00)
            (1.1963819e+00,1.4163528e+00)
            (1.2064271e+00,1.4172253e+00)
            (1.2164724e+00,1.4181015e+00)
            (1.2265176e+00,1.4189813e+00)
            (1.2365628e+00,1.4198649e+00)
            (1.2466080e+00,1.4207522e+00)
            (1.2566533e+00,1.4216432e+00)
            (1.2666985e+00,1.4225381e+00)
            (1.2767437e+00,1.4234367e+00)
            (1.2867889e+00,1.4243392e+00)
            (1.2968342e+00,1.4252455e+00)
            (1.3068794e+00,1.4261557e+00)
            (1.3169246e+00,1.4270698e+00)
            (1.3269698e+00,1.4279878e+00)
            (1.3370151e+00,1.4289098e+00)
            (1.3470603e+00,1.4298358e+00)
            (1.3571055e+00,1.4307658e+00)
            (1.3671508e+00,1.4316998e+00)
            (1.3771960e+00,1.4326379e+00)
            (1.3872412e+00,1.4335800e+00)
            (1.3972864e+00,1.4345263e+00)
            (1.4073317e+00,1.4354767e+00)
            (1.4173769e+00,1.4364313e+00)
            (1.4274221e+00,1.4373900e+00)
            (1.4374673e+00,1.4383530e+00)
            (1.4475126e+00,1.4393203e+00)
            (1.4575578e+00,1.4402918e+00)
            (1.4676030e+00,1.4412676e+00)
            (1.4776482e+00,1.4422478e+00)
            (1.4876935e+00,1.4432323e+00)
            (1.4977387e+00,1.4442212e+00)
            (1.5077839e+00,1.4452146e+00)
            (1.5178291e+00,1.4462124e+00)
            (1.5278744e+00,1.4472146e+00)
            (1.5379196e+00,1.4482214e+00)
            (1.5479648e+00,1.4492327e+00)
            (1.5580101e+00,1.4502487e+00)
            (1.5680553e+00,1.4512692e+00)
            (1.5781005e+00,1.4522943e+00)
            (1.5881457e+00,1.4533241e+00)
            (1.5981910e+00,1.4543586e+00)
            (1.6082362e+00,1.4553979e+00)
            (1.6182814e+00,1.4564419e+00)
            (1.6283266e+00,1.4574907e+00)
            (1.6383719e+00,1.4585444e+00)
            (1.6484171e+00,1.4596029e+00)
            (1.6584623e+00,1.4606663e+00)
            (1.6685075e+00,1.4617346e+00)
            (1.6785528e+00,1.4628079e+00)
            (1.6885980e+00,1.4638862e+00)
            (1.6986432e+00,1.4649696e+00)
            (1.7086884e+00,1.4660580e+00)
            (1.7187337e+00,1.4671515e+00)
            (1.7287789e+00,1.4682502e+00)
            (1.7388241e+00,1.4693540e+00)
            (1.7488693e+00,1.4704631e+00)
            (1.7589146e+00,1.4715774e+00)
            (1.7689598e+00,1.4726970e+00)
            (1.7790050e+00,1.4738219e+00)
            (1.7890503e+00,1.4749522e+00)
            (1.7990955e+00,1.4760879e+00)
            (1.8091407e+00,1.4772291e+00)
            (1.8191859e+00,1.4783757e+00)
            (1.8292312e+00,1.4795279e+00)
            (1.8392764e+00,1.4806856e+00)
            (1.8493216e+00,1.4818489e+00)
            (1.8593668e+00,1.4830179e+00)
            (1.8694121e+00,1.4841926e+00)
            (1.8794573e+00,1.4853730e+00)
            (1.8895025e+00,1.4865591e+00)
            (1.8995477e+00,1.4877511e+00)
            (1.9095930e+00,1.4889489e+00)
            (1.9196382e+00,1.4901526e+00)
            (1.9296834e+00,1.4913623e+00)
            (1.9397286e+00,1.4925779e+00)
            (1.9497739e+00,1.4937996e+00)
            (1.9598191e+00,1.4950273e+00)
            (1.9698643e+00,1.4962612e+00)
            (1.9799095e+00,1.4975012e+00)
            (1.9899548e+00,1.4987475e+00)
            (2.0000000e+00,1.5000000e+00)
          };
        \legend{
          $\theta$,
          $1+1/(1+\theta)$,
        };
        \addplot[thick, red, dashed, thick] coordinates {(1,1.5)(2,1.0)};
        \addplot[thick, dotted] coordinates {(0,+3.0)(1,+3.0)};
        \addplot[thick, dotted] coordinates {(0,+1.5)(2,+1.5)};
        \addplot[thick, dotted] coordinates {(0,+1.0)(2,+1.0)};
        \addplot[thick, dotted] coordinates {(0,-0.5)(1,-0.5)};
        \addplot[very thick, dashed, white] coordinates {(1,-0.5)(1,3)};
        \coordinate (o1) at (axis cs:0.87,1.27);
        \coordinate (o2) at (axis cs:0.87,1.16);
        \coordinate (d1) at (axis cs:0.86,-0.54);
        \coordinate (d2) at (axis cs:0.86,2.60);
        \coordinate (d3) at (axis cs:1.75,1.25);
      \end{axis}
      \draw[thick,red ] (o1) circle(0.15);
      \draw[thick,blue] (o2) circle(0.15);
      \filldraw[thick,red ] (d1) circle(0.15);
      \filldraw[thick,blue] (d2) circle(0.15);
      \filldraw[thick,blue] (d3) circle(0.15);
    \end{tikzpicture}
    \hfill
    \begin{tikzpicture}[scale=0.9, transform shape, x=1em, y=1em]
      \tikzset{>=latex}
      \begin{axis}[
        axis lines = left,
        xmin=0, xmax=2.1,
        ymin=0.15, ymax=0.55,
        legend pos=south east,
        ytick={        0.15, 0.199471,                 0.25,   0.3,   0.4,   0.5},
        yticklabels={$0.15$, $\frac{1}{2\sqrt{2\pi}}$, $0.25$, $0.3$, $0.4$, $\sfrac{1}{2}$},
        xlabel={$\beta$}, xlabel style={at=(current axis.right of origin), anchor=center,xshift=2.5em, yshift = +1.2em},
        ylabel={$\Theta_{\beta,2}$}, ylabel style={at=(current axis.above origin), anchor=south, rotate = -90, xshift = 3.5em, yshift = 1.5em},
        ]
        \addplot[domain=0:2, id=Theta, blue, thick] coordinates {
( 0.001,0.2499278584589625 )
( 0.011,0.24920759534174453 )
( 0.021,0.24848944988662128 )
( 0.031,0.24777345502817438 )
( 0.041,0.2470596445788786 )
( 0.051000000000000004,0.24634805324228332 )
( 0.061,0.2456387166267695 )
( 0.07100000000000001,0.2449316712599644 )
( 0.081,0.24422695460374888 )
( 0.091,0.24352460506996437 )
( 0.101,0.24282466203678296 )
( 0.111,0.24212716586576 )
( 0.121,0.24143215791972278 )
( 0.131,0.2407396720889704 )
( 0.14100000000000001,0.24004977727180168 )
( 0.151,0.2393624924723805 )
( 0.161,0.23867787174393348 )
( 0.171,0.23799596174939874 )
( 0.181,0.2373168102765295 )
( 0.191,0.23664046626125856 )
( 0.201,0.2359669798124457 )
( 0.211,0.2352964022374847 )
( 0.221,0.23462878606844773 )
( 0.231,0.23396418509070643 )
( 0.241,0.23330265437069986 )
( 0.251,0.23264425028635016 )
( 0.261,0.2319890305580688 )
( 0.271,0.2313370542811012 )
( 0.281,0.23068838195915942 )
( 0.291,0.2300430755393948 )
( 0.301,0.22940119844876455 )
( 0.311,0.2287628156318487 )
( 0.321,0.22812799359017633 )
( 0.331,0.22749680042312473 )
( 0.341,0.22686930587045584 )
( 0.35100000000000003,0.2262455813565596 )
( 0.361,0.22562570003629356 )
( 0.371,0.22500973684358025 )
( 0.381,0.22439772476328984 )
( 0.391,0.22378987376986592 )
( 0.401,0.22318613310563112 )
( 0.41100000000000003,0.2225866291153952 )
( 0.421,0.22199144641424576 )
( 0.431,0.22140067172670455 )
( 0.441,0.22081439395026106 )
( 0.451,0.22023270422131141 )
( 0.461,0.21965569598347232 )
( 0.47100000000000003,0.2190834650596854 )
( 0.481,0.21851610972657215 )
( 0.491,0.21795373079202557 )
( 0.501,0.21739643167621298 )
( 0.511,0.21684431849643448 )
( 0.521,0.21629750015457194 )
( 0.531,0.2157560884294979 )
( 0.541,0.21522019807256026 )
( 0.551,0.21468994690760496 )
( 0.561,0.2141654559354208 )
( 0.5710000000000001,0.2136468494424803 )
( 0.581,0.2131342551148585 )
( 0.591,0.21262780415687582 )
( 0.601,0.21212763141531546 )
( 0.611,0.21163387550897583 )
( 0.621,0.2111466789643175 )
( 0.631,0.2106661883570975 )
( 0.641,0.21019255446062313 )
( 0.651,0.2097259324007935 )
( 0.661,0.20926648181824842 )
( 0.671,0.2088143670384022 )
( 0.681,0.208369757251612 )
( 0.6910000000000001,0.20793282668909147 )
( 0.7010000000000001,0.20750375483376599 )
( 0.711,0.20708272661417626 )
( 0.721,0.20666993262148692 )
( 0.731,0.20626556933245627 )
( 0.741,0.20586983935306702 )
( 0.751,0.20548295165451258 )
( 0.761,0.20510512184042293 )
( 0.771,0.20473657241880694 )
( 0.781,0.20437753308725998 )
( 0.791,0.20402824103407957 )
( 0.801,0.20368894125794695 )
( 0.811,0.20335988689863316 )
( 0.8210000000000001,0.2030413395880518 )
( 0.8310000000000001,0.20273356981962193 )
( 0.841,0.20243685733665776 )
( 0.851,0.20215149154205392 )
( 0.861,0.2018777719295763 )
( 0.871,0.20161600853903375 )
( 0.881,0.2013665224358182 )
( 0.891,0.20112964621793192 )
( 0.901,0.20090572455063543 )
( 0.911,0.20069511473112567 )
( 0.921,0.20049818728553542 )
( 0.931,0.2003153266008827 )
( 0.9410000000000001,0.20014693159153174 )
( 0.9510000000000001,0.19999341640756396 )
( 0.961,0.19985515053839592 )
( 0.971,0.19973276282196262 )
( 0.981,0.19962653584664747 )
( 0.991,0.199537013312883 )
( 1.001,0.29296538201340894 )
( 1.011,0.31243580312047625 )
( 1.021,0.31320715987950265 )
( 1.031,0.31398382831011545 )
( 1.041,0.3147659217515195 )
( 1.051,0.31555356357750486 )
( 1.061,0.3163468811674267 )
( 1.071,0.3171460041599756 )
( 1.081,0.31795106471865453 )
( 1.091,0.31876220178596126 )
( 1.101,0.3195795579369042 )
( 1.111,0.32040327924085843 )
( 1.121,0.32123349727691675 )
( 1.131,0.32207042412871517 )
( 1.141,0.3229141643051395 )
( 1.151,0.32376490194217156 )
( 1.1609999999999998,0.3246228078103392 )
( 1.1709999999999998,0.3254880587129584 )
( 1.1809999999999998,0.3263608383516048 )
( 1.1909999999999998,0.32724132980921217 )
( 1.2009999999999998,0.328129732662407 )
( 1.2109999999999999,0.329026246109545 )
( 1.2209999999999999,0.32993107789151277 )
( 1.2309999999999999,0.3308444428442973 )
( 1.2409999999999999,0.33176682407167113 )
( 1.251,0.3326976694666959 )
( 1.261,0.33363799806912603 )
( 1.271,0.3345877971780971 )
( 1.281,0.3355473217026117 )
( 1.291,0.3365168362162073 )
( 1.301,0.3374966151223352 )
( 1.311,0.3384869429847124 )
( 1.321,0.3394881150476506 )
( 1.331,0.3405004377846164 )
( 1.341,0.3415242299621769 )
( 1.351,0.3425598205916055 )
( 1.361,0.34360755493917466 )
( 1.371,0.3446677898174333 )
( 1.381,0.34574089701751487 )
( 1.391,0.34682726354758164 )
( 1.401,0.3479272924698711 )
( 1.4109999999999998,0.3490414037899192 )
( 1.4209999999999998,0.350170035417086 )
( 1.4309999999999998,0.3513136441893233 )
( 1.4409999999999998,0.3524727069743191 )
( 1.4509999999999998,0.35364772182228293 )
( 1.4609999999999999,0.35483973017416004 )
( 1.4709999999999999,0.356047713616795 )
( 1.4809999999999999,0.3572738045540336 )
( 1.4909999999999999,0.35851807873376096 )
( 1.501,0.35978116151501727 )
( 1.511,0.3610637090313297 )
( 1.521,0.3623664103641902 )
( 1.531,0.36369000406195723 )
( 1.541,0.365035227604249 )
( 1.551,0.36640289220325184 )
( 1.561,0.36779384208142635 )
( 1.571,0.369208967591851 )
( 1.581,0.37064920872794016 )
( 1.591,0.37211555895306425 )
( 1.601,0.37360906946061617 )
( 1.611,0.37513098864583516 )
( 1.621,0.3766820933868883 )
( 1.631,0.3782640426286273 )
( 1.641,0.3798780359686803 )
( 1.651,0.38152549471721314 )
( 1.661,0.38320793374897755 )
( 1.6709999999999998,0.38492697579497726 )
( 1.6809999999999998,0.3866843549830006 )
( 1.6909999999999998,0.3884819312184187 )
( 1.7009999999999998,0.3903217018605808 )
( 1.7109999999999999,0.3922058156215615 )
( 1.7209999999999999,0.3941365888703986 )
( 1.7309999999999999,0.3961165348391685 )
( 1.7409999999999999,0.39814839297168575 )
( 1.751,0.4002349942512119 )
( 1.761,0.4023797261084938 )
( 1.771,0.4045861020043651 )
( 1.781,0.4068581166588208 )
( 1.791,0.40919977390492257 )
( 1.801,0.41161584290503483 )
( 1.811,0.4141112414629301 )
( 1.821,0.41669244796055044 )
( 1.831,0.419366144486123 )
( 1.841,0.4221395440113091 )
( 1.851,0.42501690796881086 )
( 1.861,0.4280195384436768 )
( 1.871,0.4311466975340487 )
( 1.881,0.43441589358591304 )
( 1.891,0.43784222813261486 )
( 1.901,0.4414439489014064 )
( 1.911,0.44524326133719283 )
( 1.9209999999999998,0.4492678348596698 )
( 1.9309999999999998,0.45355234085036583 )
( 1.9409999999999998,0.45814226501890215 )
( 1.9509999999999998,0.4630988690117351 )
( 1.9609999999999999,0.468510305945018 )
( 1.9709999999999999,0.47450825969685473 )
( 1.9809999999999999,0.4813304839161856 )
( 1.9909999999999999,0.4894966882184091 )
( 2.,0.5 )
        };
        \addplot[thick, dotted] coordinates {(0,+0.199471) (1,+0.199471)} node [pos=0.5,below] {\small SHE: $\frac{1}{2\sqrt{2\pi}}\approx 0.199$ };
        \addplot[thick, dotted] coordinates {(0,+0.5) (2,+0.5)} node [pos=0.5,above] {\small SWE};
        \addplot[very thick, dashed, white] coordinates {(1,+0.2) (1,+0.4)};
        \coordinate (o) at (axis cs:0.875,0.292);
        \coordinate (d1) at (axis cs:0.86,0.192);
        \coordinate (d2) at (axis cs:1.76,0.460);
      \end{axis}
      \draw[thick] (o) circle(0.2);
      \filldraw[thick,blue] (d1) circle(0.15);
      \filldraw[thick,blue] (d2) circle(0.15);
    \end{tikzpicture}

    \begin{tikzpicture}[scale=0.9, transform shape, x=1em, y=1em]
      \tikzset{>=latex}
      \begin{axis}[
        xmin=0,
        title = {Second moment Lyapunov exponent},
        xlabel={$\beta$},
        ymax = 0.78,
        ytick={0, 0.125, 0.25, 0.4, 0.6, 0.7071},
        yticklabels={$0$, {\small SHE:} $\sfrac{1}{8}$, $0.25$, $0.4$, $0.6$, {\small SWE:} $\sfrac{1}{\sqrt{2}}$},
        ],
        \addplot[domain=0:2, id=Theta, black, thick] coordinates {
( 0.001,0.24982673803399943 )
( 0.011,0.24809686383434862 )
( 0.021,0.24637203137151414 )
( 0.031,0.2446523131466168 )
( 0.041,0.2429377830823012 )
( 0.051000000000000004,0.24122851654851749 )
( 0.061,0.23952459038936758 )
( 0.07100000000000001,0.23782608295113444 )
( 0.081,0.2361330741114653 )
( 0.091,0.2344456453098552 )
( 0.101,0.23276387957943967 )
( 0.111,0.2310878615801582 )
( 0.121,0.22941767763349524 )
( 0.131,0.22775340716129786 )
( 0.14100000000000001,0.2260951657099051 )
( 0.151,0.2244430190171898 )
( 0.161,0.22279706902634616 )
( 0.171,0.22115741094283917 )
( 0.181,0.2195241418770592 )
( 0.191,0.2178973608914046 )
( 0.201,0.21627716905003422 )
( 0.211,0.21466366947084997 )
( 0.221,0.21305696737947755 )
( 0.231,0.2114571701673282 )
( 0.241,0.2098643874504579 )
( 0.251,0.20827873113291617 )
( 0.261,0.2067003154726155 )
( 0.271,0.20512925715062508 )
( 0.281,0.20356567534398795 )
( 0.291,0.20200969180227515 )
( 0.301,0.20046143092810864 )
( 0.311,0.19892101986189556 )
( 0.321,0.19738858857103841 )
( 0.331,0.1958642699439018 )
( 0.341,0.1943481998888379 )
( 0.35100000000000003,0.19284051743859362 )
( 0.361,0.19134136486025677 )
( 0.371,0.18985088777223721 )
( 0.381,0.18836918986972964 )
( 0.391,0.18689656003779975 )
( 0.401,0.18543301851947128 )
( 0.41100000000000003,0.18397877103257493 )
( 0.421,0.18253398193568962 )
( 0.431,0.1810988197881722 )
( 0.441,0.179673457522312 )
( 0.451,0.17825807262553547 )
( 0.461,0.1768528473332171 )
( 0.47100000000000003,0.1754579688342199 )
( 0.481,0.17407362948825159 )
( 0.491,0.1727000270568215 )
( 0.501,0.17133736494880109 )
( 0.511,0.16998585248194512 )
( 0.521,0.16864570516001182 )
( 0.531,0.1673171449690229 )
( 0.541,0.16600040069182675 )
( 0.551,0.16469570824374838 )
( 0.561,0.16340331103056172 )
( 0.5710000000000001,0.1621234603301292 )
( 0.581,0.16085641570025436 )
( 0.591,0.15960244541402582 )
( 0.601,0.15836182692547932 )
( 0.611,0.157134847367421 )
( 0.621,0.15592180408452008 )
( 0.631,0.1547230052040716 )
( 0.641,0.1535387702478572 )
( 0.651,0.15236943078830584 )
( 0.661,0.15121533115260744 )
( 0.671,0.1500768291792534 )
( 0.681,0.14895429703344112 )
( 0.6910000000000001,0.14784812207013862 )
( 0.7010000000000001,0.1467587077921053 )
( 0.711,0.1456864748520093 )
( 0.721,0.14463186214787802 )
( 0.731,0.14359532799979402 )
( 0.741,0.142577351431083 )
( 0.751,0.14157843353089025 )
( 0.761,0.14059909894952044 )
( 0.771,0.13963989750723413 )
( 0.781,0.13870140593660937 )
( 0.791,0.13778422977337312 )
( 0.801,0.1368890054120122 )
( 0.811,0.13601640233257284 )
( 0.8210000000000001,0.13516712552552076 )
( 0.8310000000000001,0.1343419181308387 )
( 0.841,0.13354156431272277 )
( 0.851,0.13276689239543715 )
( 0.861,0.13201877828642966 )
( 0.871,0.13129814921816416 )
( 0.881,0.13060598784163402 )
( 0.891,0.12994333671156968 )
( 0.901,0.1293113032045164 )
( 0.911,0.12871106491878948 )
( 0.921,0.12814387561117438 )
( 0.931,0.12761107173248662 )
( 0.9410000000000001,0.12711407962828697 )
( 0.9510000000000001,0.1266544234893273 )
( 0.961,0.12623366040166104 )
( 0.971,0.1258537587283821 )
( 0.981,0.1255163715542514 )
( 0.991,0.1252235860111366 )
( 1.001,0.6856258048218072 )
( 1.011,0.7020212019650203 )
( 1.021,0.7012322681101243 )
( 1.031,0.700447583913828 )
( 1.041,0.6996672405153186 )
( 1.051,0.6988913375577798 )
( 1.061,0.6981199777325718 )
( 1.071,0.6973532651961056 )
( 1.081,0.6965913057936498 )
( 1.091,0.6958342108282703 )
( 1.101,0.6950820942221368 )
( 1.111,0.6943350723716419 )
( 1.121,0.6935932482080965 )
( 1.131,0.6928567952472221 )
( 1.141,0.6921257910895491 )
( 1.151,0.6914003830788911 )
( 1.1609999999999998,0.6906807059624737 )
( 1.1709999999999998,0.6899668990203066 )
( 1.1809999999999998,0.6892591067840634 )
( 1.1909999999999998,0.688557472413208 )
( 1.2009999999999998,0.6878621526528927 )
( 1.2109999999999999,0.6871733030030994 )
( 1.2209999999999999,0.6864910854925299 )
( 1.2309999999999999,0.6858156673620669 )
( 1.2409999999999999,0.6851474476863361 )
( 1.251,0.6844859260440515 )
( 1.261,0.6838319645035664 )
( 1.271,0.6831855285334911 )
( 1.281,0.682546814644468 )
( 1.291,0.6819160264527947 )
( 1.301,0.6812933747543547 )
( 1.311,0.6806790777381999 )
( 1.321,0.6800733613594743 )
( 1.331,0.6794764597314078 )
( 1.341,0.6788886159517231 )
( 1.351,0.6783100802677795 )
( 1.361,0.6777411151120581 )
( 1.371,0.6771819909843034 )
( 1.381,0.6766329892733518 )
( 1.391,0.6760944023457158 )
( 1.401,0.6755665341364041 )
( 1.4109999999999998,0.6750497007750031 )
( 1.4209999999999998,0.6745442312623957 )
( 1.4309999999999998,0.6740504681912524 )
( 1.4409999999999998,0.6735687685195768 )
( 1.4509999999999998,0.6730995043755634 )
( 1.4609999999999999,0.6726434990676005 )
( 1.4709999999999999,0.6721998525079811 )
( 1.4809999999999999,0.6717702932878425 )
( 1.4909999999999999,0.6713548288549898 )
( 1.501,0.6709539221925384 )
( 1.511,0.670568058194739 )
( 1.521,0.670197745198167 )
( 1.531,0.6698435283119718 )
( 1.541,0.6695059474215617 )
( 1.551,0.669185598580588 )
( 1.561,0.6688831000114369 )
( 1.571,0.6685991025244441 )
( 1.581,0.6683342919574413 )
( 1.591,0.6680893918318028 )
( 1.601,0.6678651663095941 )
( 1.611,0.6676625327571223 )
( 1.621,0.6674820187771088 )
( 1.631,0.6673248593244688 )
( 1.641,0.6671919080095156 )
( 1.651,0.6670841886018082 )
( 1.661,0.6670027901458015 )
( 1.6709999999999998,0.6669488774218748 )
( 1.6809999999999998,0.666923692635587 )
( 1.6909999999999998,0.6669285656703988 )
( 1.7009999999999998,0.6669649219908811 )
( 1.7109999999999999,0.6670342921299266 )
( 1.7209999999999999,0.6671383228659504 )
( 1.7309999999999999,0.6672787983298861 )
( 1.7409999999999999,0.6674576608056402 )
( 1.751,0.667676902293143 )
( 1.761,0.6679389271898741 )
( 1.771,0.6682462097908259 )
( 1.781,0.6686015689542741 )
( 1.791,0.6690077953465687 )
( 1.801,0.6694682361639173 )
( 1.811,0.6699863092059388 )
( 1.821,0.6705665872174685 )
( 1.831,0.671213736348228 )
( 1.841,0.6719327617984916 )
( 1.851,0.6727263417046416 )
( 1.861,0.6736100647948531 )
( 1.871,0.6745825903853518 )
( 1.881,0.6756563532314029 )
( 1.891,0.6768417749397418 )
( 1.901,0.6781514484182145 )
( 1.911,0.6796006860767431 )
( 1.9209999999999998,0.6812085632083406 )
( 1.9309999999999998,0.6829989463948455 )
( 1.9409999999999998,0.6850031513795889 )
( 1.9509999999999998,0.6872633378686754 )
( 1.9609999999999999,0.6898402340228004 )
( 1.9709999999999999,0.6928245067190337 )
( 1.9809999999999999,0.6963805466604955 )
( 1.9909999999999999,0.7008688236199396 )
( 2,0.7071 )
        };
        \addplot[thick, dotted] coordinates {(0,+0.125) (1,+0.125)};
        \addplot[thick, dotted] coordinates {(0,+0.707) (2,+0.707)};
        \addplot[very thick, dashed,white] coordinates {(1,+0.125) (1,+0.707)};
        \coordinate (o) at (axis cs:0.85,0.63);
        \coordinate (d1) at (axis cs:1.74,0.63);
        \coordinate (d2) at (axis cs:0.825,0.105);
      \end{axis}
      \draw[thick] (o) circle(0.2);
      \filldraw[thick] (d1) circle(0.15);
      \filldraw[thick] (d2) circle(0.15);
    \end{tikzpicture}
  \end{center}

  \caption{Plots of the quantities in Example \ref{Ex:TFSPDE} with $\lambda=1$ and $\nu=2$. For all
  these graphs, at the jump point $\beta=1$, one needs to take the left limit.}

  \label{F:TFSPDE}
\end{figure}

%
%
%
%

\begin{example} \label{Ex:Nane}
  Mijena and Nane \cite{mijena.nane:15:space-time} studied the case when $\beta\in(0,1]$, $\alpha\in
  (0,2]$, $\gamma=1-\beta$, namely,
  \begin{equation}\label{E:Nane}
  \begin{cases}
    \left(\partial_t^{\beta}+\dfrac{\nu}{2}\left(-\Delta\right)^{\alpha / 2}\right) u(t, x)= \: I_{t}^{1-\beta}\left[\lambda u(t, x) \dot{W}(t, x)\right] , & t>0, x \in \mathbb{R}^{d},   \\
    u(0,\cdot)=u_0,                                                                                                                                       &
  \end{cases}
  \end{equation}
  under the condition
  \begin{align} \label{E:CondNane}
    d < \alpha\:\min\left(2,\beta^{-1}\right).
  \end{align}
  Note that
  condition \eqref{E:CondNane} is the same as \eqref{E:Dalang'} under this specific setting. In \cite{mijena.nane:15:space-time}, the upper bound of the large-time exponent \eqref{E:upper-tlim} was obtained; see Theorem 2
  {\it ibid.} Since the fundamental solution in this case is nonnegative (see Remark
  \ref{R:Nonneg}), we can apply Theorem \ref{T:fde} to have exact formulas for both the second
  moment and the second moment Lyapunov exponent, and to have matching lower bounds for the moment
  asymptotics. To be more precise, in this case we have
  \begin{align*}
    \theta                      & = -\beta d /\alpha,                                                                         & t_p         & = p^{\frac{2\alpha-\beta d}{\alpha - \beta d}} t, \\
    \Theta_{\alpha,\beta,d,\nu} & = \frac{1}{(2\pi)^d}\int_{\R^d} E_{\beta}^2 \left(-\frac{\nu}{2}|\xi|^\alpha\right) \ud\xi, & \widehat{t} & = \Theta_{\alpha,\beta,d,\nu} \Gamma\left(1-\beta d/\alpha\right) t^{1-\beta d/\alpha}.
  \end{align*}
  and hence we have the following results:
  \begin{enumerate}[wide=1em]
    \item {\it Second moment formula:}
      \begin{align} \label{E:2ndNane}
         \E\left[u^2(t,x)\right] =
         u_0^2 \:E_{1-\beta d/\alpha}  \left(\lambda^2 \Theta_{\alpha,\beta,d,\nu} \: \Gamma\left(1-\beta d/\alpha\right) t^{1 -\beta d/\alpha}\right).
      \end{align}
    \item {\it Second moment Lyapunov exponent:}
      \begin{align} \label{E:LyNane}
          \lim_{t\to \infty} t^{-1} \log \E\left[u(t,x)^2\right]
          = \left(\lambda^2 \Theta_{\alpha,\beta,d,\nu}\: \Gamma\left(1-\beta d/\alpha\right)\right)^{\frac{\alpha}{\alpha-\beta d}}.
      \end{align}
    \item {\it Moment asymptotics:}
      \begin{subequations} \label{E:AsymNane}
        \begin{align} \label{E:AsymNane-t}
               C_1 p^{\frac{2\alpha-\beta d}{\alpha-\beta d}}
           \le & \liminf_{t\to \infty} \frac{\log \E[u(t,x)^p]}{t}
           \le   \limsup_{t\to \infty} \frac{\log \E[u(t,x)^p]}{t}
           \le C_2 p^{\frac{2\alpha-\beta d}{\alpha-\beta d}}, & p\ge 2, \\
               C_3 \: t
           \le & \liminf_{p\to \infty} \frac{\log \E[u(t,x)^p]}{p^{\frac{2\alpha-\beta d}{\alpha-\beta d}}}
           \le   \limsup_{p\to \infty} \frac{\log \E[u(t,x)^p]}{p^{\frac{2\alpha-\beta d}{\alpha-\beta d}}}
           \le C_4 \: t, & t>0. \label{E:AsymNane-p}
        \end{align}
      \end{subequations}
  \end{enumerate}
  Note that except the two lower bounds in \eqref{E:AsymNane-t} and \eqref{E:AsymNane-p} require
  $\alpha\in (0,2]$, all the rest formulas/upper bounds in this example hold true for all
  $\alpha>0$. In particular, this would allow higher dimensions for large $\alpha$; see
  \eqref{E:CondNane}.
\end{example}

\begin{example} \label{Ex:SHESWE}
  In this example, we study the following one-parameter family of SPDEs with SHE \eqref{E:SHE}
  (resp. SWE \eqref{E:SWE}) being a special (resp. limiting) case:
  \begin{equation}\label{E:SHESWE}
    \left(\partial_t^{\beta}-\dfrac{\nu}{2}\dfrac{\partial^2}{\partial x^2}\right) u(t, x)= \: \lambda u(t, x) \dot{W}(t, x) ,
    \quad  t>0,\: x \in \mathbb{R},\: \beta\in \left(0,2\right),
  \end{equation}
  with the same initial condition as SHE \eqref{E:SHE} (resp. SWE \eqref{E:SWE}) when $\beta\in
  (0,1]$ (resp. $\beta\in(1,2)$). This is the case when $\alpha=2$, $\beta\in (0,2)$, $\gamma=0$ and
  $d=1$. Dalang's condition \eqref{E:Dalang'} reduces to
  \begin{align*}
    \beta > 2/3,
  \end{align*}
  and quantities in \eqref{E:theta} become
  \begin{align*}
  \begin{array}{lcl}
    \theta := -2+3 \beta /2,                                                                                 & \quad \quad & t_p:= p^{3\beta / (3\beta-2)} t, \\ [0.5em]
    \displaystyle \Theta_{\beta,\nu} := \pi^{-1}\int_0^\infty E^2_{\beta, \beta}(-2^{-1} \nu \xi^2) \ud \xi, & \quad \quad & \widehat{t}:= \Theta_{\beta,\nu}\: \Gamma\left(-1+3\beta /2\right)t^{-1+3\beta /2}.
  \end{array}
  \end{align*}
  Note that the fundamental solution is nonnegative (see Remark \ref{R:Nonneg}). Here we summarize
  the properties of the solution to \eqref{E:SHESWE} as follows:
  \begin{enumerate}[wide=1em]
    \item {\it Second moment formula:}
      \begin{align} \label{E:2ndSHESWE}
        \hspace{-2em}
        \E\left[u^2(t,x)\right] =
        \begin{cases}
          \quad \quad u_0^2 \:E_{-1+3\beta /2}  \left(\lambda^2 \Theta_{\beta,\nu} \: \Gamma\left(-1+3\beta /2\right)t^{-1+3\beta /2}\right) & \text{if $\beta\in (0,1]$},\\[1em]
        \begin{aligned}
           &  & u_0^2          \:E_{-1+3\beta /2}    \left(\lambda^2 \Theta_{\beta,\nu} \: \Gamma\left(-1+3\beta /2\right)t^{-1+3\beta /2}\right) \\
           &  & + 2u_0u_1\: t  \:E_{-1+3\beta /2,2}  \left(\lambda^2 \Theta_{\beta,\nu} \: \Gamma\left(-1+3\beta /2\right)t^{-1+3\beta /2}\right) \\
           &  & + 2u_1^2 \: t^2\:E_{-1+3\beta /2,3}  \left(\lambda^2 \Theta_{\beta,\nu} \: \Gamma\left(-1+3\beta /2\right)t^{-1+3\beta /2}\right)
        \end{aligned} & \text{if $\beta\in (1,2)$}.
        \end{cases}
      \end{align}
    \item {\it Second moment Lyapunov exponent:}
      \begin{align} \label{E:LySHESWE}
          \lim_{t\to \infty} t^{-1} \log \E\left[u(t,x)^2\right]
          = \left(\lambda^2 \Theta_{\beta,\nu} \: \Gamma\left(-1+3\beta /2\right)\right)^{2/(3\beta-2)}.
      \end{align}
    \item {\it Moment asymptotics:}
      \begin{subequations} \label{E:AsymSHESWE}
        \begin{align} \label{E:AsymSHESWE-t}
               C_1 p^{\frac{3\beta}{3\beta-2}}
           \le & \liminf_{t\to \infty} \frac{\log \E[u(t,x)^p]}{t}
           \le   \limsup_{t\to \infty} \frac{\log \E[u(t,x)^p]}{t}
           \le C_2 p^{\frac{3\beta}{3\beta-2}}, & p\ge 2, \\
               C_3 \: t
           \le & \liminf_{p\to \infty} \frac{\log \E[u(t,x)^p]}{p^{3\beta / (3\beta-2)}}
           \le   \limsup_{p\to \infty} \frac{\log \E[u(t,x)^p]}{p^{3\beta / (3\beta-2)}}
           \le C_4 \: t, & t>0. \label{E:AsymSHESWE-p}
        \end{align}
      \end{subequations}
  \end{enumerate}
  Thanks to \eqref{E:ML-ex}, all the above quantities when $\beta\to 2$ converge to the
  corresponding ones in Example \ref{Ex:SWE} for SWE \eqref{E:SWE}; see Figure \ref{F:SHESWE} for
  some numerical computations.
\end{example}

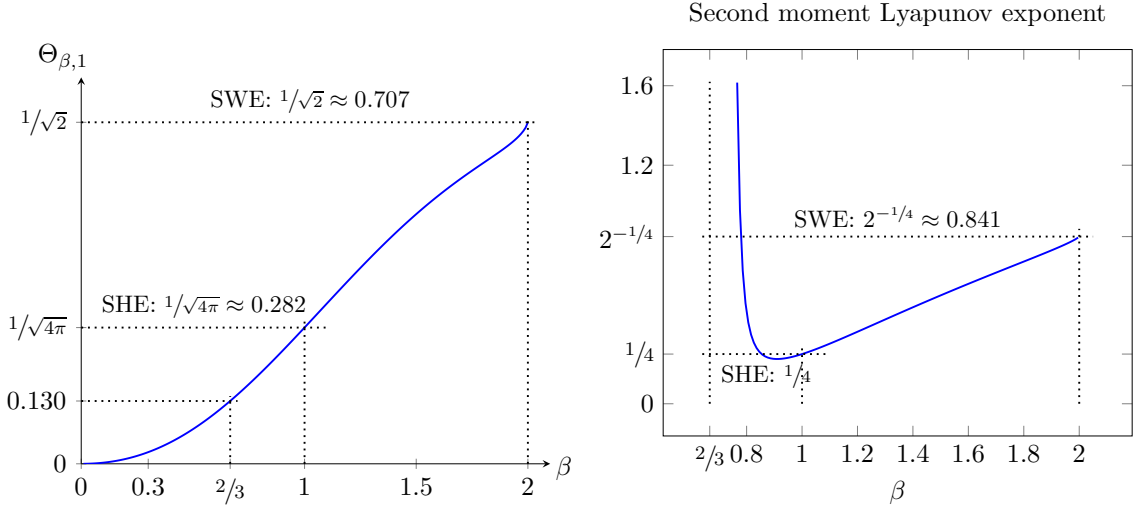
\begin{figure}[htpb]
  \begin{center}
    \begin{tikzpicture}[scale=0.9, transform shape, x=1em, y=1em]
      \tikzset{>=latex}
      \begin{axis}[
        ymax = 0.80,
        xmax = 2.10,
        axis lines = left,
        ytick={        0, 0.130,   0.282,                    0.707},
        yticklabels={$0$, $0.130$, $\sfrac{1}{\sqrt{4\pi}}$, $\sfrac{1}{\sqrt{2}}$},
        xtick={      0, 0.3, 0.667,          1, 1.5, 2},
        xticklabels={0, 0.3, $\sfrac{2}{3}$, 1, 1.5, 2},
        xlabel={$\beta$},
        ylabel={$\Theta_{\beta,1}$},
        xlabel style={at=(current axis.right of origin), anchor=center, xshift=2.3em, yshift=1.2em},
        ylabel style={at=(current axis.above origin), anchor=south, rotate = -90, xshift=2.5em,yshift=1.4em},
        ]
        \addplot[domain=0:2, id=Theta, blue, thick] coordinates {
            (0.01,0.000022287501866342994)
            (0.02,0.00008990427773282904)
            (0.03,0.0002039646581879306)
            (0.04,0.0003655594279086583)
            (0.05,0.0005757548303229392)
            (0.060000000000000005,0.0008355918967806942)
            (0.06999999999999999,0.001146085081277085)
            (0.08,0.0015082218996128206)
            (0.09,0.0019229618745259872)
            (0.09999999999999999,0.0023913696112699057)
            (0.11,0.00291394502813121)
            (0.12,0.0034919606983066158)
            (0.13,0.004126123123569873)
            (0.14,0.004817241119437536)
            (0.15000000000000002,0.005566091405619505)
            (0.16,0.006373418026664055)
            (0.17,0.007239931830828584)
            (0.18000000000000002,0.008166309951674465)
            (0.19,0.009153195347816695)
            (0.2,0.010200836506971385)
            (0.21000000000000002,0.011310886348145166)
            (0.22,0.012482803251630701)
            (0.23,0.013717449330928319)
            (0.24000000000000002,0.01501529083207566)
            (0.25,0.016376757728646205)
            (0.26,0.017802243489804796)
            (0.27,0.019292104880301156)
            (0.28,0.020846661793637193)
            (0.29000000000000004,0.022466197117414526)
            (0.3,0.02415095662966464)
            (0.31,0.025901148927363647)
            (0.32,0.027716945386041674)
            (0.33,0.02959848014912575)
            (0.34,0.031545850148432586)
            (0.35000000000000003,0.033559115154061465)
            (0.36000000000000004,0.035638297863984994)
            (0.37,0.037783383958356634)
            (0.38,0.03999432234417212)
            (0.39,0.04227102521027182)
            (0.4,0.044613371900119214)
            (0.41000000000000003,0.04702119098826523)
            (0.42000000000000004,0.04949429678483229)
            (0.43,0.05203245334607863)
            (0.44,0.05463539289911257)
            (0.45,0.0573028125356594)
            (0.46,0.06003437455571523)
            (0.47000000000000003,0.06282970683409707)
            (0.48000000000000004,0.06568840321037178)
            (0.49,0.06861002390041672)
            (0.5,0.07159409592967741)
            (0.51,0.07464011358684042)
            (0.52,0.07774753889823663)
            (0.53,0.08091580212153185)
            (0.54,0.08414430225848119)
            (0.55,0.08743240758558582)
            (0.56,0.09077945620279909)
            (0.5700000000000001,0.09418475659929203)
            (0.5800000000000001,0.09764758823499876)
            (0.59,0.1011672021381523)
            (0.6,0.10474282145902922)
            (0.61,0.10837364239158424)
            (0.62,0.11205883422425225)
            (0.63,0.11579754058323088)
            (0.64,0.11958887980374364)
            (0.65,0.12343194566706152)
            (0.66,0.1273258080891263)
            (0.67,0.1312695138200322)
            (0.68,0.13526208715329022)
            (0.6900000000000001,0.13930253064420448)
            (0.7000000000000001,0.14338982583729987)
            (0.7100000000000001,0.1475229340009468)
            (0.72,0.1517007968697286)
            (0.73,0.15592233739414926)
            (0.74,0.16018646049562466)
            (0.75,0.16449205382753862)
            (0.76,0.1688379885422067)
            (0.77,0.17322312005642487)
            (0.78,0.17764628883566608)
            (0.79,0.18210632116148395)
            (0.8,0.1866020299178761)
            (0.81,0.19113221537204814)
            (0.8200000000000001,0.19569566595947796)
            (0.8300000000000001,0.20029115906961847)
            (0.8400000000000001,0.20491746183241039)
            (0.85,0.2095733319054185)
            (0.86,0.21425751825939837)
            (0.87,0.21896876196698895)
            (0.88,0.22370579698454812)
            (0.89,0.22846735093747048)
            (0.9,0.23325214590371873)
            (0.91,0.238058899189315)
            (0.92,0.24288632410781258)
            (0.93,0.2477331307542892)
            (0.9400000000000001,0.25259802677260745)
            (0.9500000000000001,0.2574797181270653)
            (0.9600000000000001,0.26237690986263995)
            (0.97,0.26728830686209326)
            (0.98,0.27221261460743684)
            (0.99,0.2771485399140803)
            (1.,0.2820947916914828)
            (1.01,0.2870500816760682)
            (1.02,0.29201312534577956)
            (1.03,0.29698264199430086)
            (1.04,0.3019573563492025)
            (1.05,0.30693599874402266)
            (1.06,0.311917305949434)
            (1.07,0.31690002188110517)
            (1.08,0.32188289830083067)
            (1.09,0.3268646955095352)
            (1.1,0.33184418303229024)
            (1.11,0.33682014030305346)
            (1.12,0.34179135733777777)
            (1.1300000000000001,0.3467566354059613)
            (1.1400000000000001,0.35171478769456205)
            (1.1500000000000001,0.3566646399671569)
            (1.1600000000000001,0.36160503121971793)
            (1.17,0.36653481432379154)
            (1.18,0.37145285669034145)
            (1.19,0.3763580408846543)
            (1.2,0.38124926527388403)
            (1.21,0.38612544467170856)
            (1.22,0.39098551095792866)
            (1.23,0.3958284137142135)
            (1.24,0.4006531208386941)
            (1.25,0.40545861920878523)
            (1.26,0.4102439152644355)
            (1.27,0.4150080356656113)
            (1.28,0.4197500279156479)
            (1.29,0.4244689609973127)
            (1.3,0.42916392601204123)
            (1.31,0.4338340368252802)
            (1.32,0.43847843072008796)
            (1.33,0.44309626905972277)
            (1.34,0.44768673796303937)
            (1.35,0.45224904897078877)
            (1.36,0.45678243986227424)
            (1.37,0.46128617512013187)
            (1.3800000000000001,0.46575954697572564)
            (1.3900000000000001,0.47020187595281476)
            (1.4000000000000001,0.47461251170766283)
            (1.4100000000000001,0.4789908337960792)
            (1.42,0.48333625246569323)
            (1.43,0.4876482094746418)
            (1.44,0.49192617896534885)
            (1.45,0.49616966844030674)
            (1.46,0.5003782199182157)
            (1.47,0.5045514113703444)
            (1.48,0.5086888584971611)
            (1.49,0.5127902169790997)
            (1.5,0.5168552674302042)
            (1.51,0.5208835978525346)
            (1.52,0.5248750487849312)
            (1.53,0.5288294418516986)
            (1.54,0.532746644752503)
            (1.55,0.5366265732946522)
            (1.56,0.5404691935357299)
            (1.57,0.5442745240150554)
            (1.58,0.5480426390949856)
            (1.59,0.5517736703615145)
            (1.6,0.5554678113301184)
            (1.61,0.5591253203722448)
            (1.62,0.562746525011969)
            (1.6300000000000001,0.5663318264527321)
            (1.6400000000000001,0.5698817041324908)
            (1.6500000000000001,0.5733967198235204)
            (1.6600000000000001,0.5768775212119859)
            (1.6700000000000002,0.580324846335832)
            (1.68,0.5837395325580479)
            (1.69,0.5871225351442532)
            (1.7,0.5904751966758666)
            (1.71,0.5937984278989347)
            (1.72,0.597093414538878)
            (1.73,0.6003623547082003)
            (1.74,0.6036064702062912)
            (1.75,0.6068278932928822)
            (1.76,0.6100286568284596)
            (1.77,0.6132112197812423)
            (1.78,0.6163787897722169)
            (1.79,0.6195330588189909)
            (1.8,0.6226787813614127)
            (1.81,0.6258194182732266)
            (1.82,0.6289593877064407)
            (1.83,0.6321037349404727)
            (1.84,0.6352582395406207)
            (1.85,0.6384295684530447)
            (1.86,0.6416256160363187)
            (1.87,0.6448554443864041)
            (1.8800000000000001,0.6481296013368633)
            (1.8900000000000001,0.651461135360213)
            (1.9000000000000001,0.65486597807825)
            (1.9100000000000001,0.6583635440023177)
            (1.9200000000000002,0.6619782104862897)
            (1.93,0.6657416488173091)
            (1.94,0.6696957842872769)
            (1.95,0.673899490297447)
            (1.96,0.6784371878871327)
            (1.97,0.6834480370055966)
            (1.98,0.6891582244988922)
            (1.99,0.6961007879826626)
            (2.00,0.706991)
        };
        \addplot[thick, dotted] coordinates {(1,+0.0) (1,+0.3)};
        \addplot[thick, dotted] coordinates {(2,+0.0) (2,+0.71)};
        \addplot[thick, dotted] coordinates {(0.667,+0.0) (0.667,+0.14)};
        \addplot[thick, dotted] coordinates {(0,+0.282) (1.10,+0.282)} node [pos=0.5,above] (SHE) {\small SHE: $\sfrac{1}{\sqrt{4\pi}}\approx 0.282$};
        \addplot[thick, dotted] coordinates {(0,+0.707) (2.05,+0.707)} node [pos=0.5,above] (SWE) {\small SWE: $\sfrac{1}{\sqrt{2}}\approx 0.707$};
        \addplot[thick, dotted] coordinates {(0,+0.130) (0.70,+0.130)};
      \end{axis}
    \end{tikzpicture}
    \begin{tikzpicture}[scale=0.9, transform shape, x=1em, y=1em]
      \tikzset{>=latex}
      \begin{axis}[
        ytick={        0, 0.25,           0.841,                            1.2, 1.6},
        yticklabels={$0$, $\sfrac{1}{4}$, $2^{-\sfrac{1}{4}}$, 1.2, 1.6},
        xtick={      0.667,          0.8, 1, 1.2, 1.4, 1.6, 1.8, 2},
        xticklabels={$\sfrac{2}{3}$, 0.8, 1, 1.2, 1.4, 1.6, 1.8, 2},
        title={Second moment Lyapunov exponent},
        xlabel={$\beta$}]
        \addplot[domain=0:2, id=Theta, blue, thick] coordinates {
            (0.766,1.6156229403847688)
            (0.776,0.9744621486813654)
            (0.786,0.6699959938241241)
            (0.796,0.5058772871434194)
            (0.806,0.40902708606595717)
            (0.816,0.348027170316181)
            (0.826,0.30778373308768814)
            (0.836,0.2803803463337963)
            (0.846,0.26136009618816647)
            (0.856,0.24806318826220006)
            (0.866,0.23882310249070376)
            (0.876,0.23255167117234268)
            (0.886,0.22851275215759745)
            (0.896,0.22619280422833052)
            (0.906,0.2252237921622652)
            (0.9159999999999999,0.22533559924445354)
            (0.9259999999999999,0.22632571896246442)
            (0.9359999999999999,0.2280394103012715)
            (0.946,0.23035637825569444)
            (0.956,0.23318163420297652)
            (0.966,0.23643909764414445)
            (0.976,0.24006703518456265)
            (0.986,0.24401475504632092)
            (0.996,0.2482401749904485)
            (1.006,0.25270811796358894)
            (1.016,0.25738839031223787)
            (1.026,0.2622558337795727)
            (1.036,0.26728841372631956)
            (1.046,0.2724671364898107)
            (1.056,0.2777754383333581)
            (1.066,0.2831987878900507)
            (1.076,0.2887243662910936)
            (1.086,0.294340808262165)
            (1.096,0.30003799084231353)
            (1.1059999999999999,0.30580685970991467)
            (1.116,0.3116392854163787)
            (1.126,0.31752794358116015)
            (1.1360000000000001,0.3234662144186044)
            (1.146,0.3294480979714343)
            (1.156,0.33546814217608273)
            (1.166,0.3415213814605174)
            (1.176,0.347603284127905)
            (1.186,0.35370970690534664)
            (1.196,0.35983685565406764)
            (1.206,0.3659812510993498)
            (1.216,0.37213969897842314)
            (1.226,0.3783092637241888)
            (1.236,0.3844872453456427)
            (1.246,0.39067115898377586)
            (1.256,0.3968587167168396)
            (1.266,0.403047811409113)
            (1.276,0.4092365023709076)
            (1.286,0.4154230018581997)
            (1.296,0.4216061056176497)
            (1.306,0.42778297627687756)
            (1.3159999999999998,0.43395354382905293)
            (1.326,0.44011608872645314)
            (1.3359999999999999,0.44626943714255113)
            (1.346,0.4524125140247259)
            (1.3559999999999999,0.4585443364661177)
            (1.366,0.46466400798018626)
            (1.376,0.47077071303667806)
            (1.3860000000000001,0.4768637124245801)
            (1.396,0.4829423387304994)
            (1.4060000000000001,0.489005992338311)
            (1.416,0.49505413773342294)
            (1.4260000000000002,0.5010863001274728)
            (1.436,0.5071020623911683)
            (1.446,0.5131010623025533)
            (1.456,0.5190829901441101)
            (1.466,0.525047586704184)
            (1.476,0.5309946417031706)
            (1.486,0.536923992759618)
            (1.496,0.5428355884600605)
            (1.506,0.5487292450226164)
            (1.516,0.5546049778997572)
            (1.526,0.5604627978561775)
            (1.536,0.5663027565407439)
            (1.546,0.5721249469104657)
            (1.556,0.5779295012685965)
            (1.566,0.583716592802197)
            (1.576,0.5894864348891291)
            (1.5859999999999999,0.5952392816133533)
            (1.596,0.6009754283876554)
            (1.6059999999999999,0.6066952128974474)
            (1.616,0.6123990164049925)
            (1.626,0.6180872653732944)
            (1.6360000000000001,0.6237604332063318)
            (1.646,0.6294190417511296)
            (1.6560000000000001,0.6350636623270312)
            (1.666,0.6406949167964907)
            (1.6760000000000002,0.6463134806723047)
            (1.686,0.6519200916593022)
            (1.696,0.6575155663740458)
            (1.706,0.6631010469754263)
            (1.716,0.6686768983226172)
            (1.726,0.6742449477143329)
            (1.736,0.6798057314789991)
            (1.746,0.6853608022215995)
            (1.756,0.6909114674561346)
            (1.766,0.6964592593692145)
            (1.776,0.7020058643439582)
            (1.786,0.7075531933597226)
            (1.796,0.7131032439160969)
            (1.806,0.7186583482437636)
            (1.816,0.7242213340652314)
            (1.826,0.7297943864369231)
            (1.836,0.7353815447102073)
            (1.846,0.7409863263164824)
            (1.856,0.7466131264020158)
            (1.866,0.7522670728294867)
            (1.8760000000000001,0.7579537518077222)
            (1.8860000000000001,0.7636799831432994)
            (1.896,0.769455944650156)
            (1.906,0.775290249066542)
            (1.916,0.7811963892025144)
            (1.926,0.7871908319447614)
            (1.936,0.7932955930952799)
            (1.946,0.7995396007438964)
            (1.956,0.8059658770841603)
            (1.966,0.812637664737253)
            (1.976,0.8196593488021726)
            (1.986,0.827270917650239)
            (1.996,0.8361307009192475)
        };
        \addplot[thick, dotted] coordinates {(0.667,+0.0) (0.667,+1.62)};
        \addplot[thick, dotted] coordinates {(1,+0.0) (1,+0.285)};
        \addplot[thick, dotted] coordinates {(2,+0.0) (2,+0.88)};
        \addplot[thick, dotted] coordinates {(0.64,+0.25) (1.1,+0.25)} node [pos=0.5,below]    (SHE) {\small SHE: $\sfrac{1}{4}$};
        \addplot[thick, dotted] coordinates {(0.64,+0.841) (2.05,+0.841)} node [pos=0.5,above] (SWE) {\small SWE: $2^{-\sfrac{1}{4}}\approx 0.841$};
      \end{axis}
    \end{tikzpicture}
  \end{center}

  \caption{Plots of $\Theta_{\beta,\nu}$ and the second moment Lyapunov exponent in Example
  \ref{Ex:SHESWE} with $\lambda=1$ and $\nu=1$.}

  \label{F:SHESWE}
\end{figure}

%
%

\section{Existence and uniqueness of the solution for the nonlinear equation} \label{S:Exist}

In this section, we shall establish the well-posedness of \eqref{E:fde} by working under slightly
more general settings as follows. For $\alpha>0$, $\beta\in(0,2]$, $\gamma\ge 0$, $\nu>0$, and
$\dot{W}$ as in \eqref{E:fde}, consider
\begin{equation}\label{E:fde-Gen}
  \begin{cases}
    \left(\partial_t^{\beta}+\dfrac{\nu}{2}\left(-\Delta\right)^{\alpha / 2}\right) u(t, x)= \: I_{t}^{\gamma}\left[\rho\left(u(t, x)\right) \dot{W}(t, x)\right] , & t>0, x \in \mathbb{R}^{d},               \\
    u(0,x)=u_0(x),                                                                                                                                                & \text {$x\in\R^d$, if $\beta \in(0,1]$}, \\
    u(0,x)=u_0(x), \quad \dfrac{\partial}{\partial t} u(0,x)=u_1(x),                                                                                              & \text {$x\in\R^d$, if $\beta \in(1,2]$,}
  \end{cases}
\end{equation}
where $\rho(\cdot)$ is Lipschitz continuous and $u_0, u_1\in L^\infty\left(\R^d\right)$. The
fundamental solutions for \eqref{E:fde-Gen}, as well as \eqref{E:fde}, consist of three components:
$Z_{\alpha,\beta,d}(t,x)$, $Z_{\alpha,\beta,d}^*(t,x)$ and $Y_{\alpha,\beta, \gamma, d}(t,x)$, which
have been studied in \cite[Theorem 4.1]{chen.hu.ea:19:nonlinear} for the case when $\beta\in(0,2)$
and $\alpha\in (0,2]$. The more general setting, namely, the case when $\alpha>0$ and
$\beta\in(0,2]$, is proved in Theorem \ref{T:PDE}. Throughout the rest of the article, we will write
\begin{align} \label{E:p}
  p(t,x):=Y_{\alpha,\beta,\gamma,d}(t,x),
\end{align}
whose Fourier transform is given in \eqref{E:FY}.

%
The solution to the homogeneous equation of \eqref{E:fde-Gen} is given by
\begin{align} \label{E:J0(t,x)}
  J_0(t,x) =
  \begin{cases}
   \displaystyle \int_{\R^d} Z_{\alpha,\beta,d}(t,x-y)u_0(y)\ud y,                                                    & \text{if $\beta\in(0,1]$}, \vspace{0.2cm} \\
   \displaystyle \int_{\R^d} Z_{\alpha,\beta,d}^*(t,x-y)u_0(y)\ud y + \int_{\R^d} Z_{\alpha,\beta,d}(t,x)u_1(y)\ud y, & \text{if $\beta\in(1,2]$}.
  \end{cases}
\end{align}
When the initial conditions $ u_0$ and $u_1$ are two constants, then by \eqref{E:FZ} and
\eqref{E:FZ*}, $J_0(t,x)$ does not depend on $x$ and hence is denoted by $J_0(t)$ later on:
\begin{equation} \label{E:J0(t)}
  J_0(t) =
  \begin{cases}
    u_0\mathcal{F} Z_{\alpha,\beta,d}(t,\cdot) (0)  = u_0,                                                       & \text{if $\beta\in(0,1]$} , \vspace{0.2cm} \\
    u_0\mathcal{F} Z_{\alpha,\beta,d}^*(t,\cdot)(0) + u_1\mathcal{F} Z_{\alpha,\beta,d}(t,\cdot)(0) = u_0+u_1 t, & \text{if $\beta\in(1,2]$} ,
  \end{cases}
\end{equation}
where $\mathcal{F} g=\widehat{g}$ is the Fourier transform of $g$ in spatial variable, i.e., if
$g(t,\cdot) \in L^1(\R^d)$,
\begin{align*}
  \mathcal{F} g(t,\xi)=\widehat{g}(t, \xi) :=\int_{\R^d} g(t,x)  e^{-i x\cdot \xi} \ud x.
\end{align*}

Let $W=\left\{W_t(A):A\in\cB_b(\R^d),t\geq0\right\}$ be a space–time white noise defined on a
complete probability space $\left(\Omega,\cF,\mathbb{P}\right)$, where $\cB_b(\R^d)$ is the
collection of Borel sets with finite Lebesgue measure. Let
\begin{equation*}
  \cF_t=\sigma(W_s(A):0\leq s\leq t, A\in\cB_b(\R^d))\vee\cN,\quad t\geq0,
\end{equation*}
be the natural filtration augmented by the $\sigma$-field $\cN$ generated by all $\mathbb{P}$-null
sets in $\cF$.

\begin{definition} \label{D:Sol}
  A process $u=\left\{u(t,x): t>0,x\in\R^d\right\}$ is called a {\it random field solution} to
  \eqref{E:fde-Gen} if it is adapted to the filtration $\{\mathcal{F}_t\}_{t\ge 0}$, jointly
  measurable with respect to $\mathcal{B}\left((0,\infty)\times\R^d\right)\times \mathcal{F}$,
  square integrable in the sense that
  \begin{align*}
    \int_0^t\ud s\int_{\R^d}\ud y\: p(t-s,x-y)^2 \E\left[\rho\left(u(s,y)\right)^2\right] <\infty,
    \quad \text{for all $t>0$ and $x\in\R^d$,}
  \end{align*}
  and satisfies the following integral equation a.s.
 \begin{equation}\label{E:fde-Int}
    u(t,x) = J_0(t,x)+ \int_{0}^{t}\int_{\bR^d}p(t-s,x-y) \rho\left(u(s,y)\right)W(\ud s,\ud y),
  \end{equation}
  for all $t>0$ and  $x\in\R^d$, where $J_0(t,x)$ is given by \eqref{E:J0(t,x)} and the stochastic
  integral on the right-hand side is the {\it Walsh integral} \cite{walsh:86:introduction}.
\end{definition}

The existence and uniqueness of the mild solution to \eqref{E:fde-Gen} with the bounded initial
conditions are well covered by the classical Dalang-Walsh theory; see
\cite{walsh:86:introduction,dalang:99:extending,dalang.khoshnevisan.ea:09:minicourse}. In that
theory, {\it Dalang's condition} usually refers to some simplified, but still equivalent, conditions
to
\begin{equation}\label{E:Dalang}
  \int_{0}^{t}\int_{\bR^d} p(s,y)^2\ud s\ud y<\infty, \quad \text{for all $t>0$}.
\end{equation}
Note that condition \eqref{E:Dalang} is the necessary and sufficient condition for the existence and
uniqueness of a global solution for the corresponding linear equation, i.e., the case when
$\rho(u)\equiv 1$ in \eqref{E:fde-Gen}. The following lemma finds out the explicit form of Dalang's
condition for \eqref{E:fde} and \eqref{E:fde-Gen}, which extends Lemma 5.3 of
\cite{chen.hu.ea:19:nonlinear} from the case $\alpha\in (0,2]$ and $\beta\in(0,2)$ to the case
$\alpha>0$ and $\beta\in(0,2]$.

\begin{lemma}[Dalang's condition] \label{L:Dalang}
  For the SPDE \eqref{E:fde-Gen}, Dalang's condition \eqref{E:Dalang} is equivalent to
  \eqref{E:Dalang'}.
\end{lemma}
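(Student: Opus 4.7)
\textbf{Proof plan for Lemma \ref{L:Dalang}.} The whole argument goes through the Fourier transform. By Plancherel's theorem, condition \eqref{E:Dalang} is equivalent to
\[
  \int_0^t \ud s \int_{\R^d} \bigl|\widehat{p}(s,\xi)\bigr|^2 \ud \xi < \infty \qquad \text{for all } t>0.
\]
I will use the explicit formula \eqref{E:FY} for the Fourier transform of $p(t,\cdot)=Y_{\alpha,\beta,\gamma,d}(t,\cdot)$, which (as dictated by the Mittag-Leffler structure underlying \eqref{E:fde}) has the form
\[
  \widehat{p}(s,\xi) = s^{\beta+\gamma-1} E_{\beta,\beta+\gamma}\!\left(-\tfrac{\nu}{2} s^{\beta}|\xi|^\alpha\right).
\]
Then the scaling substitution $\eta = (\nu/2)^{1/\alpha} s^{\beta/\alpha}\xi$ separates the time and space variables, yielding
\[
  \int_0^t\!\!\int_{\R^d}|\widehat{p}(s,\xi)|^2\ud s\ud \xi
  = c_{\alpha,\nu}\!\left(\int_0^t s^{\theta}\ud s\right)\!\left(\int_{\R^d} \bigl|E_{\beta,\beta+\gamma}(-|\eta|^\alpha)\bigr|^2\ud \eta\right),
\]
with $\theta$ as in \eqref{E:theta}. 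Hence convergence reduces to two separate conditions: (i) temporal integrability at $s=0$, equivalent to $\theta>-1$, and (ii) spatial integrability of $|E_{\beta,\beta+\gamma}(-|\eta|^\alpha)|^2$ on $\R^d$.

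\textbf{Case $\beta\in(0,2)$.} Using the standard large-argument asymptotics of the two-parameter Mittag-Leffler function (e.g. \cite[\S 1.8]{kilbas.srivastava.ea:06:theory})
\[
  E_{\beta,b}(-z) = -\frac{1}{\Gamma(b-\beta)\,z} + O(z^{-2}), \qquad z\to +\infty,
\]
together with $E_{\beta,b}(0) = 1/\Gamma(b)$, I see that $|E_{\beta,\beta+\gamma}(-|\eta|^\alpha)|^2$ is bounded near $\eta=0$ and behaves like $|\eta|^{-2\alpha}$ as $|\eta|\to\infty$. Thus the $\eta$-integral converges iff $d<2\alpha$. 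Combining this with $\theta>-1$, which rearranges to $d < 2\alpha + (\alpha/\beta)(2\gamma-1)$, gives the condition $d < 2\alpha + (\alpha/\beta)\min\{2\gamma-1,0\}$ of \eqref{E:Dalang'}. Conversely, whenever \eqref{E:Dalang'} fails, one of these two integrals diverges, giving the reverse implication.

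\textbf{Case $\beta=2$.} Here the Mittag-Leffler functions $E_{2,2+\gamma}(-z)$ are oscillatory rather than monotone (for instance $E_{2,2}(-z^2)=\sin(z)/z$), so the crude pointwise asymptotic above fails. Instead I would express $E_{2,2+\gamma}(-z)$ through the identities relating it to $\cos\sqrt{z}$ and $\sin\sqrt{z}/\sqrt{z}$ combined with lower-order polynomial terms (from the recursion \eqref{E:ML_a+b}) and analyze
\[
  \int_{\R^d} \bigl|E_{2,2+\gamma}(-|\eta|^\alpha)\bigr|^2 \ud \eta
\]
separately near the origin and at infinity. Near $\eta=0$, cancellations give a $|\eta|^{2\alpha\min\{1,\gamma\}}$-type leading behavior of $E_{2,2+\gamma}(-|\eta|^\alpha) - \mathrm{const}$ for $\gamma<1$, which controls the small-$\eta$ behavior. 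At infinity, the oscillations yield an effective decay of order $|\eta|^{-\alpha}$ times a bounded oscillating factor, so the integrand is $|\eta|^{-2\alpha}$ up to oscillation. Tracking the exponents and matching them against the temporal integrability ($\theta>-1$, i.e. $d<\alpha(1+\gamma)$ when $\beta=2$) yields exactly $d<\alpha\min\{2,1+\gamma\}$. Lemma \ref{L:sin} provides the convergent sine integral used explicitly in Examples \ref{Ex:SWE} and \ref{Ex:SFWE}.

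\textbf{Main obstacle.} The oscillatory $\beta=2$ case is the delicate point: without monotone decay of the Mittag-Leffler factor one must argue that the oscillations do \emph{not} improve spatial integrability (so failure of \eqref{E:Dalang'} really does produce divergence) and simultaneously do not destroy it either. This is where the $\min\{2,1+\gamma\}$ structure appears: the $2$ reflects the genuine $|\eta|^{-2\alpha}$ decay from $|E|^2$ at infinity, while the $1+\gamma$ reflects the $\xi=0$ cancellations that limit how many spatial dimensions the fractional integration $I_t^\gamma$ can compensate for. The remainder of the proof consists of routine estimates once the Fourier representation and the Mittag-Leffler asymptotics are in place.
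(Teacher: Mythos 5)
Your setup (Plancherel, the scaling substitution separating the $\ud s$ and $\ud\xi$ integrals, the reduction to ``$\theta>-1$ plus spatial integrability of $E_{\beta,\beta+\gamma}^2(-|\eta|^\alpha)$'', and the $\beta\in(0,2)$ asymptotics) is exactly the paper's argument and is correct. The problem is your $\beta=2$ case, which contains a genuine error in the quantitative asymptotics. The correct large-argument expansion is
\begin{equation*}
  E_{2,2+\gamma}\left(-|\xi|^\alpha\right)
  = \frac{\cos\left(\sqrt{|\xi|^\alpha}-\pi(\gamma+1)/2\right)}{|\xi|^{\alpha(1+\gamma)/2}}
  + \frac{1}{\Gamma(\gamma)\,|\xi|^{\alpha}}
  + O\left(|\xi|^{-2\alpha}\right),
\end{equation*}
so the oscillatory leading term decays like $|\xi|^{-\alpha(1+\gamma)/2}$, \emph{not} like $|\xi|^{-\alpha}$ as you claim. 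Squaring, the dominant contributions at infinity are $\cos^2(\cdot)/|\xi|^{\alpha(1+\gamma)}$ and $1/(\Gamma(\gamma)^2|\xi|^{2\alpha})$; since $\cos^2$ averages to $1/2$ and does not improve integrability (this is exactly the paper's Lemma \ref{L:integrability}), one needs \emph{both} $d<\alpha(1+\gamma)$ and $d<2\alpha$, and both constraints come from the behavior at infinity. Your attribution is reversed and partly fictitious: you assign the $|\eta|^{-2\alpha}$ decay to the oscillatory factor (it actually belongs to the non-oscillatory $1/(\Gamma(\gamma)|\eta|^\alpha)$ term), and you attribute the $1+\gamma$ constraint to ``cancellations near $\eta=0$''. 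There is no issue at the origin at all: $E_{2,2+\gamma}$ is entire with $E_{2,2+\gamma}(0)=1/\Gamma(2+\gamma)\neq 0$, so $E^2$ is bounded near $0$ and locally integrable regardless of $\gamma$. Following your sketch as written would not produce the exponent $\alpha(1+\gamma)$ and hence would not prove the equivalence.

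A second, smaller error: for $\beta=2$ the temporal condition $\theta>-1$ reads $2(2+\gamma)-2-2d/\alpha>-1$, i.e.\ $d<\alpha(\gamma+3/2)$, not $d<\alpha(1+\gamma)$ as you state. (It happens to be implied by $d<\alpha\min\{2,1+\gamma\}$, which is why it drops out of the final answer, but the identification you make is wrong.) To repair the proof you need the precise $\beta=2$ Mittag--Leffler asymptotics above (Lemma \ref{L:ML-Asymp} in the paper) together with the ``$\cos^2$ does not help'' lemma; with those two ingredients your Case $\beta\in(0,2)$ and your general framework go through unchanged.
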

\begin{proof}
  By \eqref{E:FY} and the Parseval–Plancherel identity, we have
  \begin{align}\label{E:L2-p}
    \int_{\R^d} |p(s,x)|^2 \ud x
    = & \frac{1}{(2\pi)^d} \int_{\R^d}|\widehat{p}(s,\xi)|^2d\xi
    = \frac{1}{(2\pi)^d}\int_{\R^d}s^{2(\beta+\gamma)-2}E_{\beta,\beta+\gamma}^2(-2^{-1}\nu|\xi|^\alpha s^\beta)\ud\xi \notag \\
    = & s^{2(\beta+\gamma)-2- \beta d/\alpha} \frac{1}{(2\pi)^d}  (2^{-1}\nu )^{-d/\alpha} \int_{\R^d} E_{\beta,\beta+\gamma}^2(-|\eta|^\alpha)\ud\eta,
  \end{align}
  where in the last step we have used the change of variable $\xi=(2^{-1}\nu s^\beta)^{-1/\alpha}
  \eta$. Then clearly, Dalang's condition \eqref{E:Dalang} is equivalent to
  \begin{equation} \label{E:power}
  \begin{cases}
    & 2(\beta+\gamma)-2-\beta d/\alpha>-1, \vspace{0.2cm} \\
    & \displaystyle \int_{\R^d} E_{\beta,\beta+\gamma}^2(-|\xi|^\alpha)\ud\xi<\infty.
  \end{cases}
  \end{equation}
  To characterize the second condition in \eqref{E:power}, noting that
  $E_{\beta,\beta+\nu}^2\left(-|\cdot|\right)$ is locally integrable, it suffices to know the
  asymptotic behavior of $E_{\beta,\beta+\gamma}^2(-|\xi|^\alpha)$ as $|\xi|\to \infty$. By Lemma
  \ref{L:ML-Asymp}, as $|\xi|\to \infty$,
  \begin{equation}\label{E:asym-E}
    E_{\beta,\beta+\gamma}\left(-|\xi|^\alpha\right)=
  \begin{cases}
   \displaystyle -\frac{1}{\Gamma(\gamma) |\xi|^\alpha}+O(|\xi|^{-2\alpha}),                                                                                                      & \beta\in(0,2),\vspace{0.3cm} \\
   \displaystyle \frac{\cos\left(\sqrt{|\xi|^\alpha}-\pi(\gamma+1)/2\right)}{|\xi|^{\alpha(1+\gamma)/2}} +  \frac{1}{\Gamma(\gamma)|\xi|^\alpha}+O\left(|\xi|^{-2\alpha}\right) , & \beta=2,
   \end{cases}
  \end{equation}
  Then, for $\beta\in(0,2)$, clearly \eqref{E:power} is equivalent to
  \begin{align*}
    2(\beta+\gamma)-2-\frac{\beta d}{\alpha}>-1 ~\text{ and }~ 2\alpha>d,
  \end{align*}
  which can be also expressed as $d<2\alpha+\frac{\alpha}{\beta}\min\{2\gamma-1,0\}$. For the case
  $\beta=2$, by \eqref{E:asym-E}, we have as $|\xi|\to \infty$,
  \begin{align*}
    E^2_{\beta,\beta+\gamma}\left(-|\xi|^\alpha\right)
    = &   \frac{\cos^2\left(\sqrt{|\xi|^\alpha}-\pi(\gamma+1)/2\right)}{|\xi|^{\alpha(1+\gamma)}}
        + \frac{1}{\Gamma^2(\gamma)|\xi|^{2\alpha}} \\
      & + 2\frac{\cos\left(\sqrt{|\xi|^\alpha}-\pi(\gamma+1)/2\right)}{\Gamma(\gamma)|\xi|^{\alpha(3+\gamma)/2}}
        + O\left(|\xi|^{-5\alpha/2}\right).
  \end{align*}
  Thus, the second condition in \eqref{E:power} is equivalent to, for any $\varepsilon>0$,
  \begin{equation*}
    \int_{|\xi|>\varepsilon} \frac{\cos^2\left(\sqrt{|\xi|^\alpha}-\pi(\gamma+1)/2\right)}{|\xi|^{\alpha(1+\gamma)}} \ud \xi <\infty
    \quad \text{and} \quad
    \int_{|\xi|>\varepsilon} \frac{1}{|\xi|^{2\alpha}} \ud \xi <\infty,
  \end{equation*}
  where the first condition is equivalent to $\alpha(1+\gamma)>d$ by Lemma \ref{L:integrability} and
  the second one is $2\alpha>d$. Therefore, when $\beta=2$, we have that \eqref{E:power} is
  equivalent to
   \begin{equation*}
   \begin{cases}
      & d<\alpha(\gamma+\frac32),\vspace{0.2cm} \\
      & d<\alpha\min\{2, 1+\gamma \},
    \end{cases}
    \quad \Longleftrightarrow \quad
      d<\alpha\min\left\{2, 1+\gamma \right\}.
  \end{equation*}
  This completes the proof of Lemma \ref{L:Dalang}.
\end{proof}

Under Dalang's condition, it is routine (see, e.g., Theorem 13 of \cite{dalang:99:extending} or the
proof of Theorem 2.4 of \cite{chen.dalang:15:moments*1}) to establish the following theorem regarding
the existence and uniqueness of the solution to \eqref{E:fde-Gen}, the proof of which will be left
for the interested readers.

\begin{theorem} \label{T:Exist}
  Under Dalang's condition \eqref{E:Dalang'}, if the initial conditions are bounded, namely, $u_0$
  and $u_1\in L^\infty(\R^d)$, then there exists a unique (in the sense of versions) random field
  solution $u(t,x)$ to \eqref{E:fde-Gen}, which is $L^2(\Omega)$-continuous with all bounded $p$-th
  moments:
  \begin{equation}\label{E:bd-con}
    \sup_{0\le t \le T}\sup_{x\in\R^d} \E\left[\left|u(t,x)\right|^p\right]<\infty, \quad
    \text{for all $p\ge 2$ and $T>0$.}
  \end{equation}
\end{theorem}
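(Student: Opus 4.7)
The plan is to set up a Picard iteration tailored to Walsh's stochastic integration theory, following closely the framework of \cite{walsh:86:introduction,dalang:99:extending} and the concrete implementation in Theorem 2.4 of \cite{chen.dalang:15:moments*1}. I would define $u^{(0)}(t,x) := J_0(t,x)$ and inductively
\begin{equation*}
u^{(n+1)}(t,x) := J_0(t,x) + \int_0^t\!\int_{\R^d} p(t-s, x-y)\,\rho\bigl(u^{(n)}(s,y)\bigr)\,W(\ud s,\ud y).
\end{equation*}
Since $u_0, u_1 \in L^\infty(\R^d)$, the deterministic part $J_0(t,x)$ in \eqref{E:J0(t,x)} is uniformly bounded on $[0,T]\times\R^d$, which gives the base step. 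I would then verify by induction that each $u^{(n)}$ is adapted, jointly measurable, and has uniformly bounded $p$-th moments on $[0,T]\times\R^d$.

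The core estimate would come from the Burkholder-Davis-Gundy inequality for Walsh integrals combined with Minkowski's inequality in $L^{p/2}$: for every $p \ge 2$ there is a constant $c_p$ such that
\begin{equation*}
\Norm{u^{(n+1)}(t,x) - u^{(n)}(t,x)}_p^2 \le c_p L^2 \int_0^t\!\int_{\R^d} p(t-s,x-y)^2\, \Norm{u^{(n)}(s,y) - u^{(n-1)}(s,y)}_p^2\,\ud s\,\ud y,
\end{equation*}
where $L$ is the Lipschitz constant of $\rho$. By the computation \eqref{E:L2-p} already carried out in Lemma \ref{L:Dalang}, the spatial integral $\int_{\R^d} p(s,y)^2 \ud y$ equals $C s^\theta$ with $\theta>-1$ under Dalang's condition \eqref{E:Dalang'}. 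Setting $H_n(t) := \sup_{x\in\R^d} \Norm{u^{(n+1)}(t,x) - u^{(n)}(t,x)}_p^2$, this reduces to a fractional convolution inequality $H_n(t) \le C' \int_0^t (t-s)^\theta H_{n-1}(s)\,\ud s$ on $[0,T]$. Iterating and using the beta-function identity yields $H_n(t) \le K^n t^{n(\theta+1)}/\Gamma(n(\theta+1)+1)$, so that $\sum_n \sqrt{H_n(t)}$ is controlled by the Mittag-Leffler function $E_{\theta+1}(Kt^{\theta+1})$ and converges uniformly on $[0,T]$. Thus $\{u^{(n)}(t,x)\}$ is Cauchy in $L^p(\Omega)$ uniformly in $(t,x)\in[0,T]\times\R^d$, and the limit $u(t,x)$ satisfies the integral equation in Definition \ref{D:Sol} together with the uniform bound \eqref{E:bd-con}. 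Uniqueness would follow by applying the same Gronwall-type bound to the difference of any two solutions.

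The remaining property to check is $L^2(\Omega)$-continuity, for which I would combine continuity of $(t,x)\mapsto J_0(t,x)$ with joint $L^2(\R_+\times\R^d)$-continuity of the map $(t,x)\mapsto p(t-\cdot,x-\cdot)\one_{[0,t]}(\cdot)$ inside the Walsh isometry. The only genuinely delicate step is controlling the temporal singularity of $p$ at $s=t$ in the regime $\theta<0$, which I would handle by dominated convergence using the explicit representation of $p$ derived in Appendix \ref{S:Funamental}; this is where the extension from $\alpha\in(0,2]$ in \cite{chen.hu.ea:19:nonlinear} to all $\alpha>0$ is actually used. All other steps are straightforward bookkeeping on top of the classical Dalang-Walsh framework, which is why the authors are justified in labelling the argument as routine.
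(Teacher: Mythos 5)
Your proposal is correct and follows exactly the route the paper itself indicates: the authors explicitly leave the proof to the reader as a routine application of the Dalang--Walsh Picard iteration (citing Theorem 13 of \cite{dalang:99:extending} and Theorem 2.4 of \cite{chen.dalang:15:moments*1}), which is precisely the scheme you carry out, with the key input being the identity $\int_{\R^d}p(s,y)^2\,\ud y = Cs^{\theta}$, $\theta>-1$, from Lemma \ref{L:Dalang}. No gaps; the fractional Gronwall iteration, the Mittag-Leffler summation, and the $L^2(\Omega)$-continuity argument are all standard and correctly assembled.
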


Before the end of this section, we make some remarks:

\begin{remark}[Rough initial data]
  The main focus of this paper is the exact moment formula with constant initial condition. Theorem
  \ref{T:Exist} presents the existence and uniqueness of the solution in a slightly more general
  setting, which still falls in the classical Dalang-Walsh theory. For the measure-valued initial
  conditions, such as the Dirac delta initial condition, more efforts are needed and property
  \eqref{E:bd-con} no longer holds; see
  \cite{chen.dalang:15:moments*1,chen.dalang:15:moments,chen.dalang:15:moment,chen.kim:19:nonlinear,chen.hu.ea:19:nonlinear}.
\end{remark}
\begin{remark}[H\"oder regularity]
  In \cite{chen.hu.ea:19:nonlinear} and \cite{chen.hu:22:holder}, the space-time H\"older regularity
  of the solution to \eqref{E:fde} has been obtained (in the case of $\alpha\in(0,2]$ and
  $\beta\in(0,2)$). It is an interesting open problem to extend the H\"older regularity results in
  \cite{chen.hu.ea:19:nonlinear} and \cite{chen.hu:21:holder} to the more general setting, namely,
  $\alpha>0$ and $\beta\in(0,2]$.
\end{remark}
\begin{remark}[Second moment comparison for nonlinear SPDEs] \label{R:MomComp}
  Let $u(t,x)$ be the solution to \eqref{E:fde-Gen} as stated in Theorem \ref{T:Exist}. Suppose that
  $\sigma$ is Lipschitz continuous and satisfies the following cone condition with some constants
  $0\le\underline{\lambda}\le \overline{\lambda}$:
  \begin{align*}
    \underline{\lambda}|x| \le \left|\sigma(x)\right| \le \overline{\lambda} |x|, \quad \text{for all $x\in\R$}.
  \end{align*}
  Then by denoting the right-hand side of \eqref{E:SecMom} by $f_\lambda(t)$, the moment formula in
  Theorem \ref{T:fde} can be extended directly to this case by the following
 moment comparison principle for the second moment:
  \begin{align} \label{E:MomComp}
    f_{\underline{\lambda}}(t)\le \E\left[u(t,x)^2\right] \le f_{\overline{\lambda}}(t).
  \end{align}

 When the noise is white in time but colored in space (see \eqref{E:NoiseWC}),
    the moment comparison principle (for $p\ge 2$) or more generally the stochastic comparison
    principle becomes much more involved and the parabolic nature of the equation will play an
    important role. Hence, one can in principle only handle the case when $\beta=1$. One may check
    the work along this line in    \cite{chen.kim:19:nonlinear,chen.huang:19:comparison,chen.kim:20:stochastic}. However, for the
    space-time white noise case, the second moment comparison as in \eqref{E:MomComp} comes for
    free. Note that when the noise is colored in time (see \eqref{E:NoiseCC}), to the best our
  knowledge, one can only handle the linear case, namely, $\sigma(u)=\lambda u$. In this case, the
moment comparison principle can be easily established by comparing the movements chaos by chaos.
\end{remark}
\begin{remark}[Wiener chaos expansion] \label{R:W-C}
  When $\rho(u)=\lambda u$, instead of using Dalang-Walsh theory, one can equivalently establish the
  solution to \eqref{E:fde} using the Wiener chaos expansion specified as follows: Set $ u_0(t,x) =
  J_0(t)$ and for $n\ge 1$,
  \begin{align*}
    u_{n}(t, x) = J_0(t) +\sum_{k=1}^{n} \lambda^k  \int_{[0,t]^k}\int_{\R^{kd}} g_k(s_1, \dots, s_k, x_1, \dots, x_k; t,x) W(\ud s_1, \ud x_1)\cdots W(\ud s_k,\ud x_k),
  \end{align*}
  where
  \begin{align} \label{E:gk}
      & g_k(s_1, \dots, s_k, x_1, \dots, x_k; t,x)\notag                                                                        \\
    = & p(t-s_k, x-x_k)p(s_k-s_{k-1}, x_k-x_{k-1})\cdots p(s_2-s_1, x_2-x_1) J_0(s_1) \one_{\{ 0<s_1<\dots<s_k<t \}}\notag \\
    = & \prod_{r=1}^k p(s_{r+1}-s_r, x_{r+1}-x_r ) J_0(s_1)  \one_{\{ 0<s_1<\dots<s_k<t \}},
  \end{align}
  where we use the convention $s_{k+1}=t$ and $x_{k+1}=x$.  Then, the mild solution has the
  following so-called Wiener chaos representation:
  \begin{equation}\label{E:Chaos}
    u(t,x) = J_0(t) +\sum_{k=1}^\infty \lambda^k I_k (f_k(\cdot; t,x)),
  \end{equation}
  where $f_k(\cdot; t,x)$ is the symmetrization of $g_k(\cdot; t,x)$ given by, denoting by $\mathcal
  P_k$ the set of all permutations of $\{1, \dots, k\}$,
  \begin{equation}\label{E:fk}
    f_k(s_1, \dots, s_k, x_1, \dots, x_k; t,x)=\frac1{k!}\sum_{\sigma \in \mathcal{P}_k} g_k(s_{\sigma(1)}, \dots, s_{\sigma(k)}, x_{\sigma(1)}, \dots, x_{\sigma(k)}; t,x)
  \end{equation}
  and $I_k (f_k(\cdot; t,x))$ denotes the $k$-th multiple Wiener-It\^o integral. We refer the
  interested readers to \cite{hu:17:analysis} for more details.
\end{remark}

\section{Second moment formula and upper bounds for the $p$-th moments} \label{S:Upper}

In this section, we shall prove parts (a) and (b) of  Theorem \ref{T:fde}.

\begin{proof}[Proof of part (a) of Theorem \ref{T:fde}]
  By the It\^o-Walsh isometry, we have
  \begin{equation*}
    \E\left[u^2(t,x)\right]= J_0^2(t)+\lambda^2 \int_0^{t}\int_{\R^d} p^2(t-s,x-y) \E\left[u^2(s,y)\right]\ud s\ud y,
  \end{equation*}
  where $J_0(t)$ is given by \eqref{E:J0(t)}. Note that due to the choice of the constant initial
  conditions, the solution to the homogeneous equation does not depend on $x$, i.e.,
  $J_0(t,x)=J_0(t)$. Hence, through a standard Picard iteration, one can show that the second moment
  $\E\left(u(t,x)^2\right)$ does not depend on $x$. Let $\eta(t)=\E\left(u(t,x)^2\right)$. Invoking
  \eqref{E:L2-p}, the above equation can be written as
  \begin{equation} \label{E:eta}
     \eta(t) = J_0^2(t)+\lambda^2 \Theta \int_{0}^{t}(t-s)^\theta\eta(s)\ud s,
  \end{equation}
  where $\theta$ and $\Theta$ are given in \eqref{E:theta}. Now we solve the fractional integral
  equation \eqref{E:eta} for $\beta \in (0,1]$ and for $\beta\in(1, 2]$ separately. \medskip

  \noindent {\bf Case 1.} When $\beta\in(0, 1]$, we have $J_0(t) = u_0$ by \eqref{E:J0} and thus
  \eqref{E:eta} is equivalent to
  \begin{equation*}
    \begin{cases}
      \left(D_{0+}^{\theta+1}\eta\right)(t) =\lambda^2 \Theta \Gamma(\theta+1)\eta(t) + (D_{0+}^{\theta+1} u_0^2)(t),\vspace{0.2cm} \\
      \eta(0)=u_0^2 \quad \text{and} \quad \eta^{(k)}(0)=0, \text{ for } k=1,2,..., \Ceil{\theta},
    \end{cases}
  \end{equation*}
  where $D^{\theta+1}$ is Riemann-Liouville derivative given in Definition \ref{def:D}. Using the
  Caputo fractional derivative given in Definition \ref{def:CD}, it can written as
  \begin{equation*}
    \begin{cases}
      \left({}^CD_{0+}^{\theta+1}\eta\right)(t)=\lambda^2 \Theta \Gamma(\theta+1)\eta(t). \vspace{0.2cm} \\
      \eta(0)=u_0^2 \quad \text{and} \quad \eta^{(k)}(0)=0, \text{ for } k=1,2,..., \Ceil{\theta},
    \end{cases}
  \end{equation*}
  of which the solution is directly given by \eqref{y1}:
  \begin{equation*}
    \eta(t) =u_0^2 E_{\theta+1}\left(\lambda^2 \widehat{t}\:\right).
  \end{equation*}
  This proves the first part of \eqref{E:SecMom}. \medskip

  \noindent {\bf Case 2.} When $\beta\in(1,2]$, we have $J_0(t) = u_0 + u_1 t$ by \eqref{E:J0} and
  \eqref{E:eta} now is
  \begin{equation} \label{E:eta'}
     \eta(t) = u_0^2+2u_0u_1t+u_1^2t^2+\lambda^2 \Theta \int_{0}^{t}(t-s)^\theta\eta(s)\ud s.
  \end{equation}
  Let $f(t):=2u_0u_1t+u^2_1t^2$ then \eqref{E:eta} can be written as
  \begin{equation}\label{E:jieFDE}
    \begin{cases}
      (D_{0+}^{\theta+1}\eta)(t) = \lambda^2 \Theta \Gamma(\theta+1)\eta(t) + (D_{0+}^{\theta+1}[u_0^2+f(\cdot)])(t),\vspace{0.2cm}\\
      \eta(0)       = u_0^2, ~
      \eta^{(1)}(0) = 2u_0u_1, ~
      \eta^{(2)}(0) = 2u_1^2,\vspace{0.2cm}\\
      \eta^{(k)}(0) = 0, \text{ for $k = 3, \dots, \Ceil{\theta}$.}
    \end{cases}
  \end{equation}
  In order to apply the formula \eqref{y1}, we will transform \eqref{E:jieFDE} into a Caputo
  fractional differential equation. When $\theta+1\in(0,1)$, by \eqref{E:CD}, we can write
  \eqref{E:jieFDE} as
  \begin{equation*}
    \begin{cases}
      {}^CD_{0+}^{\theta+1}\eta(t)=\lambda^2 \Theta \Gamma(\theta+1)\eta(t) +(D_{0+}^{\theta+1}f)(t). \vspace{0.2cm} \\
      \eta(0)=u_0^2.
    \end{cases}
  \end{equation*}
  The solution now follows directly from \eqref{y1}:
  \begin{align}\label{E:eta''}
     \eta(t) = & ~ u_0^2 E_{\theta+1}\left(\lambda^2 \widehat{t}\:\right) \notag \\
               & \qquad + \int_{0}^{t} (t-s)^\theta E_{\theta+1,\theta+1}\left(\lambda^2 \Theta \Gamma(\theta+1)(t-s)^{\theta+1}\right)(D_{0+}^{\theta+1}f)(s)\ud s.
   \end{align}
  For the integral on the right-hand side, by \eqref{E:ML} and Lemma \ref{L:I-D} we have
  \begin{align*}
      & \int_{0}^{t} (t-s)^\theta E_{\theta+1,\theta+1}\left(\lambda^2 \Theta \Gamma(\theta+1)(t-s)^{\theta+1}\right)(D_{0+}^{\theta+1}f)(s)\ud s                       \\
    = & \int_{0}^{t}(t-s)^\theta \sum_{k=0}^\infty \frac{(\lambda^2 \Theta \Gamma(\theta+1))^k}{\Gamma((k+1)(\theta+1))}(t-s)^{k(\theta+1)}(D_{0+}^{\theta+1}f)(s)\ud s \\
    = & \sum_{k=0}^{\infty}(\lambda^2 \Theta \Gamma(\theta+1))^k \left(I_{0+}^{(\theta+1)(k+1)}D_{0+}^{\theta+1}f\right)(t)                                                  \\
    = & \sum_{k=0}^{\infty}(\lambda^2 \Theta \Gamma(\theta+1))^k \left(I_{0+}^{k(\theta+1)}f\right)(t).
  \end{align*}
  The term $\left(I_{0+}^{k(\theta+1)}f\right)(t)=\left(I_{0+}^{k(\theta+1)} (2u_0u_1 s + u_1^2
  s^2)\right)(t)$ can be computed explicitly noting that Lemma \ref{L:ItDt} yields
 \begin{equation}\label{E:I-s}
   \left(I_{0+}^{k(\theta+1)}s\right)(t)   = \frac{t^{k(\theta+1)+1}}{\Gamma(k(\theta+1)+2)}  \quad \text{and} \quad
   \left(I_{0+}^{k(\theta+1)}s^2\right)(t) = \frac{2t^{k(\theta+1)+2}}{\Gamma(k(\theta+1)+3)}.
 \end{equation}
  Combining \eqref{E:eta''}--\eqref{E:I-s} and applying \eqref{E:ML}, we arrive at
  \begin{equation*}
    \eta(t)
    = u_0^2 E_{\theta+1}\left(\lambda^2 \widehat{t}\:\right)
    + 2u_0u_1t E_{\theta+1,2}\left(\lambda^2 \widehat{t}\:\right)
    + 2u_1^2t^2 E_{\theta+1,3}\left(\lambda^2 \widehat{t}\:\right).
  \end{equation*}
  This proves the second part of \eqref{E:SecMom} for $\theta+1\in(0,1)$. For the other two cases
  $\theta+1\in[1, 2)$ and $\theta+1\ge2$, one can calculate in a similar way and prove the desired
  result. Finally, \eqref{E:2nd-Ly} is a direct consequence of Lemma \ref{L:ML-Asymp}. This
  completes the proof of part (a) of Theorem \ref{T:fde}.
\end{proof}
\begin{remark}[Alternative approach]\label{R:method2}
  Alternatively, one can also solve \eqref{E:eta} directly by an application of Lemma \ref{L:f(t)}
  as follows:
  \begin{align*} 
    \eta(t) = J_0^2(t) + \int_0^t J_0^2(s)K(t-s)\ud s,
  \end{align*}
  where $J_0(t)$ is given in \eqref{E:J0} and the resolvent kernel function $K\left(\cdot\right)$ is
  given by
  \begin{align*}
    K(t) = \lambda^2 \Theta \Gamma(\theta+1)t^{\theta} E_{\theta+1,\theta+1} \left(\lambda^2 \widehat{t}\:\right).
  \end{align*}
  Thus we have, denoting $A=\lambda^2 \Theta \Gamma(\theta+1)$,
 \begin{align*}
   \eta(t) & = J_0^2(t)+ A\int_{0}^{t}J_0^2(s) (t-s)^\theta E_{\theta+1,\theta+1}\left(A(t-s)^{\theta+1}\right) \ud s .
 \end{align*}
  When $J_0(t) = u_0$, we have by the definition \eqref{E:ML} of $E_{a,b}$,
 \begin{align*}
    \eta(t)
    & = u_0^2+u_0^2 \sum_{k=0}^{\infty} \frac{A^{k+1}}{\Gamma((\theta+1)(k+1))} \int_{0}^{t}(t-s)^{(\theta+1)k+\theta} \ud s                                                                        \\
    & = u_0^2+u_0^2 \sum_{k=0}^{\infty} \frac{A^{k+1}t^{(\theta+1)(k+1)}}{\Gamma((\theta+1)(k+1)+1)}= u_0^2+u_0^2 \sum_{k=1}^{\infty} \frac{A^kt^{(\theta+1)k}}{\Gamma\left((\theta+1)k + 1\right)} \\
    & = u_0^2E_{\theta+1}\left(\lambda^2 \widehat{t}\:\right).
  \end{align*}
  This proves the equality of \eqref{E:SecMom} for $\beta\in(0,1]$.  Applying similar computations
  to the case $J_0(t)=u_0+u_1t$, we can justify the second part of \eqref{E:SecMom} for $\beta\in(1,
  2]$. Indeed, the $p$-th moment upper bounds will be obtained using this approach in the next proof.
\end{remark}
\begin{proof}[Proof of part (b) of Theorem \ref{T:fde}]
  Fix an arbitrary $p\ge 2$. By \eqref{E:fde-Int} we have
  \begin{align*}
    \|u(t,x)\|_p\le |J_0(t)|
    + \left(\E\left[\left|\int_{0}^{t}\int_{\R^d}p(t-s,x-y)\lambda u(s,y)W(\ud s,\ud y)\right|^p\right]\right)^{1/p}.
  \end{align*}
  Applying the Burkholder–Davis–Gundy inequality , we have
  \begin{equation*}
    \|u(t,x)\|_p
    \leq  |J_0(t)|+C_p \left(\E\left[ \left(\int_{0}^{t}\int_{\R^d}p^2(t-s,x-y)\lambda^2u^2(s,y)\ud s\ud y \right)^{p/2}\right]\right)^{1/p},
  \end{equation*}
  where $C_p$ is the universal constant in the Burkholder–Davis–Gundy inequality satisfying $C_p\in
  (0,2\sqrt{p})$ and $C_p=(2+o(1))\sqrt{p}$ as $p\to \infty$ (see
  \cite{carlen.kree:91:lp,conus.khoshnevisan:12:on}). By Minkowski's inequality, we get
  \begin{equation*}
    \Vert u(t,x)\Vert_p \leq |J_0(t)|+2\sqrt{p}\left(\int_{0}^{t}\int_{\R^d}\lambda^2p^2(t-s,x-y)\Vert u(s,y)\Vert_p^2\ud s\ud y\right)^{1/2}.
  \end{equation*}
  Denote $\psi(t)=\sup\limits_{x\in\R^d}\Vert u(t,x)\Vert_p^2$ and recall the definition
  \eqref{E:theta} of $\theta$ and $\Theta$. We have
  \begin{equation*}
    \psi(t) \le 2\left(J_0^2(t) + 4p \lambda^2\Theta\int_{0}^{t}(t-s)^{\theta}\psi(s)\ud s\right).
  \end{equation*}
  Applying Lemma \ref{L:f(t)}, we have
  \begin{equation*}
    \psi(t) \leq 2 J_0^2(t) + 2 \int_{0}^{t}J_0^2(s) K(t-s)\ud s,
  \end{equation*}
  where
  \begin{equation*}
    K(t)= 8p\lambda^2 \Theta \Gamma(\theta+1)t^{\theta}
    E_{\theta+1,\theta+1} \left(8p\lambda^2 \widehat{t}\:\right).
  \end{equation*}
  Then,  one can apply the same computations as those in Remark \ref{R:method2} to simplify the
  above $\ud s$ integral in order to obtain \eqref{E:p-mom}. Finally, \eqref{E:upper-tlim} and
  \eqref{E:upper-plim} follow from Lemma \ref{L:ML-Asymp} directly. This proves part (b) of Theorem
  \ref{T:fde}.
\end{proof}

\section{Lower bounds for the $p$-th moments} \label{S:Lower}
Compared with the upper bound for the $p$-th moment, the computation for the lower bound is more
involved.  The methodology used in this section is inspired by the recent work of
Hu and Wang~\cite{hu.wang:21:intermittency}. Some ideas are originated from Dalang and Mueller
\cite{dalang.mueller:09:intermittency}.


\subsection{Nondegeneracy and positivity of the fundamental functions} \label{SS:Nondeg}

In the next proposition, we prove a nondegeneracy property of the fundamental solutions, which is
tailored specially for the spatial white noise. Conditions for the fundamental solutions to be
nonnegative are given in Remark \ref{R:Nonneg} below.

\begin{proposition}\label{P:nondeg}
  For all $\e>0$ and $c>2$, if either
  \begin{enumerate}
    \item the fundamental solution $p(\cdot,\circ)$ is nonnegative and $\beta\in(0,2)$, or
    \item $\alpha=\beta=2$, $\gamma=0$, and $d=1,2$,
  \end{enumerate}
  then there exists some constant $C>0$ independent of $\e$ such that
  \begin{equation}\label{E:nondeg}
    \iint_{B_\e^2(x)}p(t,a-y)p(s,b-y')\delta_0(y-y')\ud y\ud y'\ge C \e^{-d}(ts)^{\beta+\gamma-1}
  \end{equation}
  for all $x\in\R^d$, $s,t\in[2\e^\frac{\alpha}{\beta},c\e^\frac{\alpha}{\beta}]$, and $a,b\in
  B_\e(x)$.
\end{proposition}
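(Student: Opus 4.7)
The plan is to first collapse the Dirac delta, reducing the left-hand side of \eqref{E:nondeg} to the diagonal integral
\[I := \int_{B_\e(x)} p(t,a-y)\,p(s,b-y)\,\ud y,\]
and then to exploit the self-similarity inherited from the Fourier representation \eqref{E:FY}: namely $p(t,z) = t^{\beta+\gamma-1-\beta d/\alpha}\,\Phi(t^{-\beta/\alpha} z)$, where $\Phi(z) := (2\pi)^{-d}\int_{\R^d} e^{iz\cdot\eta}\,E_{\beta,\beta+\gamma}(-2^{-1}\nu|\eta|^\alpha)\,\ud \eta$. After substituting this scaling and performing the change of variable $y = x + \e w$, the target bound $I \ge C\e^{-d}(ts)^{\beta+\gamma-1}$ becomes equivalent to the uniform lower bound
\[\int_{B_1(0)} \Phi(\tilde a - \mu_1 w)\,\Phi(\tilde b - \mu_2 w)\,\ud w \;\ge\; C',\]
with $\tilde a := t^{-\beta/\alpha}(a-x)$, $\tilde b := s^{-\beta/\alpha}(b-x)$, $\mu_1 := t^{-\beta/\alpha}\e$, and $\mu_2 := s^{-\beta/\alpha}\e$. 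The constraint $t,s \in [2\e^{\alpha/\beta}, c\e^{\alpha/\beta}]$ confines $\mu_1,\mu_2$ to the compact interval $[c^{-\beta/\alpha}, 2^{-\beta/\alpha}]$ bounded away from $0$ and $\infty$, and $a,b \in B_\e(x)$ confines $\tilde a,\tilde b$ to a fixed ball, so only a uniform positive bound over a compact parameter set is required.

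In case~(1), $\Phi$ is nonnegative (since $p \ge 0$) with total mass $\int_{\R^d}\Phi = 1/\Gamma(\beta+\gamma) > 0$; I would then verify that $\Phi$ is continuous and strictly positive in some neighborhood of the origin by reading off these properties from the series and Fox $H$-function representations of the fundamental solution developed in Appendix~\ref{S:Funamental}. Granted this, the integrand in the display above is jointly continuous in $(\tilde a, \tilde b, \mu_1, \mu_2, w)$, nonnegative, and strictly positive on a set of positive $w$-measure for each parameter choice; compactness of the parameter set then forces the infimum of the integral to be strictly positive, producing $C'>0$ and the claim.

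In the wave case $\alpha=\beta=2,\,\gamma=0$, the kernel is explicit: $p(t,x) = (2\nu)^{-1/2}\,\one_{\{|x| < \sqrt{\nu/2}\,t\}}$ in $d=1$ and $p(t,x) = (2\pi)^{-1}(\tfrac{\nu}{2}t^2 - |x|^2)_+^{-1/2}\,\one_{\{|x|<\sqrt{\nu/2}\,t\}}$ in $d=2$. For $y\in B_\e(x)$ and $t,s$ in the stated range (with $\nu$-dependent factors absorbed into $C$), both $a-y$ and $b-y$ lie well inside the corresponding light cones, so one has the pointwise lower bounds $p(t,a-y)\gtrsim 1$ in $d=1$ and $p(t,a-y)\gtrsim 1/t$ in $d=2$, uniformly; integrating over $B_\e(x)$, whose volume is of order $\e^d$, gives the conclusion by direct computation. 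The chief difficulty is the argument in case~(1): the nonnegativity hypothesis covers many parameter combinations whose fundamental solutions are only implicitly described, so establishing continuity and strict positivity of $\Phi$ near the origin rigorously will require the subordination or Fox $H$-function representations developed in Appendix~\ref{S:Funamental}.
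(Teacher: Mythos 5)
Your proposal follows essentially the same route as the paper's proof: collapse the Dirac delta to the diagonal integral, extract the factor $(ts)^{\beta+\gamma-1}\e^{-d}$ via the self-similarity of the kernel (the paper reads this off the Fox $H$-function form \eqref{E:Yab}, you off the equivalent Fourier scaling $p(t,z)=t^{\beta+\gamma-1-\beta d/\alpha}\Phi(t^{-\beta/\alpha}z)$), reduce case (1) to a uniform positive lower bound over a compact parameter set, and treat case (2) by direct computation with the explicit wave kernels.

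There is, however, one concrete insufficiency in your plan for case (1). Continuity of $\Phi$ together with strict positivity \emph{in some neighborhood of the origin} does not guarantee that $\Phi(\tilde a-\mu_1 w)\,\Phi(\tilde b-\mu_2 w)$ is positive on a set of $w$ of positive measure: if $\Phi>0$ only on $B_r(0)$ with $r$ small, the first factor forces $w$ to lie near $\tilde a/\mu_1$ while the second forces $w$ near $\tilde b/\mu_2$, and these requirements can be incompatible when $\tilde a/\mu_1\neq\tilde b/\mu_2$. What is actually needed --- and what the paper invokes --- is that for $\beta\in(0,2)$ the fundamental solution $p(t,\cdot)$ is continuous with support equal to all of $\R^d$; then the two (open) positivity sets in the $w$-variable are each dense in $B_1(0)$, hence meet in a nonempty open set, and lower semicontinuity of the integral in $(\tilde a,\tilde b,\mu_1,\mu_2)$ plus compactness of that parameter set gives the positive infimum. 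Alternatively, positivity of $\Phi$ on the fixed ball of radius $2\cdot 2^{-\beta/\alpha}$ would do, since all arguments $\tilde a-\mu_1 w$ stay in that ball; but ``some neighborhood of the origin'' is not enough. A smaller remark on case (2): your claim that $a-y$ and $b-y$ lie inside the light cones for $t,s\ge 2\e$ tacitly uses that the wave speed $\sqrt{\nu/2}$ is at least $1$; the paper's reduction to $\nu=2$ plays the same role, so on this point you match the paper's level of detail rather than fall short of it.
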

\begin{proof}
  Denote the double integral in \eqref{E:nondeg} by $I$. We first work under condition (1). In
  this case, from \eqref{E:Yab}, we see that
  \begin{align*}
    I = \int_{B_\e(x)} p(t,a-x')p(s,b-x')\ud x'
      = & \int_{B_\e(x)} \pi^{-d/2}|x'-a|^{-d}t^{\beta+\gamma-1}
          h\left(\frac{ |x'-a|^\alpha}{2^{\alpha-1}\nu t^\beta}\right)  \\
        & \times \pi^{-d/2}|x'-b|^{-d}s^{\beta+\gamma-1}
          h\left(\frac{ |x'-b|^\alpha}{2^{\alpha-1}\nu s^\beta}\right)\ud x',
  \end{align*}
  where
  \begin{equation*}
    h(x):=\FoxH{2,1}{2,3}{x}{(1,1),\:(\beta+\gamma,\beta)}{(d/2,\alpha/2),\:(1,1),\:(1,\alpha/2)}.
  \end{equation*}
  Set $I':=\pi^{-d}\left(ts\right)^{\beta+\gamma-1} I$. Notice when $\beta\in(0,2)$, the fundamental
  solution $p(t,x)$ is a smooth function for $t>0$ and $x\in\R^d$ and $x\in\R^d$ and the support of
  $p(t,x)$ is the whole space. Moreover, under the nonnegative assumption, for
  $s,t\in[2\e^\frac{\alpha}{\beta},c\e^\frac{\alpha}{\beta}]$, we have
  \begin{align*}
     I'\ge & \inf_{c_1,c_2\in[2,c]}\int_{B_\e(x)}  |x'-a|^{-d}h\left(\frac{ |x'-a|^\alpha}{2^{\alpha-1}\nu c_1^\beta\e^\alpha}\right)
             \times |x'-b|^{-d}h\left(\frac{|x'-b|^\alpha}{2^{\alpha-1}\nu c_2^\beta\e^\alpha}\right)\ud x' \\
        =  & \e^{-d} \inf_{c_1,c_2\in[2,c]}\int_{B_1(x)} |x'-a/\e|^{-d}h\left(\frac{ |x'-a/\e|^\alpha}{2^{\alpha-1}\nu c_1^\beta}\right)
             \times |x'-b/\e|^{-d}h\left(\frac{ |x'-b/\e|^\alpha}{2^{\alpha-1}\nu c_2^\beta}\right) \ud x' \\
       \ge & C\e^{-d},
  \end{align*}
 where
 \begin{equation*}
   C = \mathop{\inf_{a',b'\in B_1(x)}}_{c_1,c_2\in[2,c]}\int_{B_1(x)} |x'-a'|^{-d}h\left(\frac{ |x'-a'|^\alpha}{2^{\alpha-1}\nu c_1^\beta}\right)
       \times  |x'-b'|^{-d}h\left(\frac{ |x'-b'|^\alpha}{2^{\alpha-1}\nu c_2^\beta}\right) \ud x'
     > 0.
 \end{equation*}
 This proves \eqref{E:nondeg} under condition (1). \bigskip

 Now we assume condition (2). It suffices to show the case when $\nu=2$. It is well known that when
 $\alpha=\beta=\nu=2$ and $\gamma=0$,
  \begin{equation*}
    p(t,x)=
    \begin{cases}
      \dfrac{1}{2}\one_{\{|x|<t\}},                               & \text{if $d=1$}, \\[1em]
      \dfrac{1}{2\pi}\dfrac{1}{\sqrt{t^2-|x|^2}}\one_{\{|x|<t\}}, & \text{if $d=2$}.
    \end{cases}
  \end{equation*}
  For all $a,b,x'\in B_\e(x)$ and $s,t\ge 2\e$, we have
  $\one_{\{|x'-a|<t\}}\one_{\{|x'-b|<s\}}\equiv1$. Hence, when $d=1$,
   \begin{equation*}
   I  = \int_{B_\e(x)} \frac{1}{4}\one_{\{|x'-a|<t\}}\one_{\{|x'-b|<s\}} \ud x'=\frac{1}{2}\e \ge \frac{\e^{-1}}{2c^2}ts,
   \end{equation*}
   where the inequality is due to the fact that $t,s\le c\: \e$. Similarly, when $d=2$,
   \begin{align*}
     I = \int_{B_\e(x)}\frac{1}{4\pi^2}\frac{1}{\sqrt{t^2-|x'-a|^2}}\frac{1}{\sqrt{s^2-|x'-b|^2}}\ud x'
       \ge  \frac{1}{4\pi^2 t s} \int_{B_\e(x)}\ud x'
       \ge  \frac{\e^2}{4 \pi t s}
       \ge  \frac{\e^{-2}}{4\pi c^4} ts.
   \end{align*}
   This completes the proof of Proposition \ref{P:nondeg}.
\end{proof}

\begin{remark}[Nonnegativity of fundamental solutions] \label{R:Nonneg}
  The nonnegativity of Green's functions associated with \eqref{E:fde} was first proved in
  \cite{chen.hu.ea:17:space-time} for the case $\gamma=0$, and was later extended in \cite[Theorem
  4.6]{chen.hu.ea:19:nonlinear} to allow $\gamma\ge 0$; see also Remark 1.2 of
  \cite{chen.eisenberg:22:interpolating}. It is known that the Green's function is nonnegative in
  the following three cases:
  \begin{equation}\label{E:Pos}
    \begin{cases}
      (1)\quad \alpha\in(0,2], \; \beta\in(0,1], \: \gamma\ge 0,                 \: d\geq1;       \\[1em]
      (2)\quad 1<\beta<\alpha\leq2,              \: \gamma>0,                    \: 1\leq d\leq3; \\[1em]
      (3)\quad 1<\beta=\alpha<2,                 \: \gamma>\dfrac{d+3}{2}-\beta, \: 1\leq d\leq3. \\
    \end{cases}
  \end{equation}
\end{remark}

\subsection{Feynman Diagram Formula} \label{SS:Feynman}
In this part, we recall the Feynman Diagram formula, which is useful to compute the expectation of
products of multiple Wiener-It\^o integrals. We refer interested readers to Section 5.3 of
\cite{hu:17:analysis} for more details about the multiple Wiener-It\^o integrals.

On the lattice $\mathbb{Z}^2$, we use $(k,\ell)$ to denote a vertex, and an ordered pair $[(k_1,
\ell_1), (k_2, \ell_2)]$ to denote a directed edge pointing from $(k_1, \ell_1)$ to $(k_2, \ell_2)$.

\begin{definition}\label{D:Admissible}
  Let $p\ge 1$ and $\vec{n}=(n_1,\cdots,n_p) \in \mathbb{N}^p$ with $|\vec{n}|=n_1+\cdots+n_p$  be
  given. A \emph{Feynman diagram} is a directed graph $\cD = \left(V, E\right)$ consisting of the
  set of all vertices
  \begin{align*}
    V=\Big\{\left(k, \ell\right) :\: 1\le k\le p,\: 1\le \ell \le n_k\Big\}
  \end{align*}
  and a set $E$ of directed edges satisfying $k_1<k_2$ if $\left[\left(k_1, \ell_1\right),
  \left(k_2, \ell_2\right)\right]\in E$. A Feynman diagram $\cD=\left(V,E\right)$ is called
  \emph{admissible} if each vertex is associated with one and only one edge.  The set of all
  admissible diagrams is denoted by $\bD=\bD_{\vec{n}}$.
\end{definition}

\begin{figure}[htpb]
  \begin{center}
    \begin{tikzpicture}[scale=1, x=4em, y=3em]
      \tikzset{>=latex}
      \draw[->,thick] (-1,0) -- (5,0) node [right] {$k$};
      \draw[->,thick] (0,-0.2) -- (0,4.5) node [above] {$\ell$};
      \foreach \x in {1,...,4}{
          \draw (\x,0.1)--++(0,-0.2) node [below] {$\x$};
      }
      \draw (0.1,1)--++(-0.2,0) node [left] {$n_1=1$};
      \draw (0.1,2)--++(-0.2,0) node [left] {$n_2=n_3=2$};
      \draw (0.1,3)--++(-0.2,0) node [left] {$n_4=3$};
      \draw (0.1,4)--++(-0.2,0) node [left] {$4$};
      \node[] (1p1) at (1,1) {$(1,1)$};
      \node[] (2p2) at (2,2) {$(2,2)$};
      \node[] (2p1) at (2,1) {$(2,1)$};
      \node[] (3p1) at (3,1) {$(3,1)$};
      \node[] (3p2) at (3,2) {$(3,2)$};
      \node[] (4p1) at (4,1) {$(4,1)$};
      \node[] (4p2) at (4,2) {$(4,2)$};
      \node[] (4p3) at (4,3) {$(4,3)$};

      \node[gray!50!white] (1p2) at (1,2) {$(1,2)$};
      \node[gray!50!white] (2p3) at (2,3) {$(2,3)$};
      \node[gray!50!white] (3p3) at (3,3) {$(3,3)$};
      \node[gray!50!white] (4p4) at (4,4) {$(4,4)$};
      \draw[gray!50!white] (1p2) -- (2p3) -- (3p3) -- (4p4);

      \draw[red, thick,->] (1p1) .. controls (1.5, 1.5) .. (2p1);
      \node[red] at (1.5,1.6) {$\mathcal{D}_1$};
      \draw[red, thick,->] (3p1) -- (4p2);
      \draw[red, thick,->] (3p2) -- (4p1);
      \draw[red, thick,->] (2p2)  .. controls (3,2.7) .. (4p3);

      \draw[blue, thick,->] (1p1) .. controls (2, 0.3) .. (3p1);
      \node[blue] at (2,0.2) {$\mathcal{D}_2$};
      \draw[blue, thick,->] (2p1) .. controls (3.1,1.5) .. (4p1);
      \draw[blue, thick,->] (2p2) .. controls (3.1,1.5) .. (4p2);
      \draw[blue, thick,->] (3p2) .. controls (3.6,2.0) .. (4p3);

      \draw[dashed] (1,0) -- (1p1);                   \draw[dashed,gray!50!white] (1p1) -- (1p2); \node[] at (1,-0.8) {$I_1(h_1)$};
      \draw[dashed] (2,0) -- (2p1) -- (2p2);          \draw[dashed,gray!50!white] (2p2) -- (2p3); \node[] at (2,-0.8) {$I_2(h_2)$};
      \draw[dashed] (3,0) -- (3p1) -- (3p2);          \draw[dashed,gray!50!white] (3p2) -- (3p3); \node[] at (3,-0.8) {$I_2(h_3)$};
      \draw[dashed] (4,0) -- (4p1) -- (4p2) -- (4p3); \draw[dashed,gray!50!white] (4p3) -- (4p4); \node[] at (4,-0.8) {$I_3(h_4)$};

      \node[] at (4.35,-0.37) {$=p$};
    \end{tikzpicture}
  \end{center}
  \caption{Two admissible (red $\mathcal{D}_1$ and blue $\mathcal{D}_2$) Feynman diagrams for the
  case when $p=4$, $\vec{n}=\left(1,2,2,3\right)$ and $|\vec{n}|=8$; see Example \ref{Ex:Diagrams}.
Convention \eqref{E:Convent} applies at the gray vertices in the settings of Lemma \ref{L:product}.}

  \label{F:FeynmanD}
\end{figure}
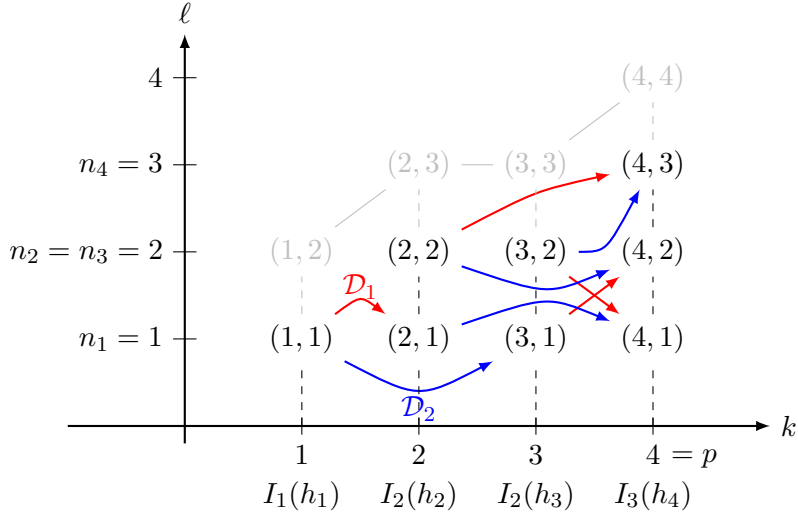


We shall provide a formula for $\E\big[I_{n_1}(h_{1})\cdots I_{n_p}(h_{p})\big]$ for square
integrable functions
\begin{align} \label{E:hi}
  h_{i}:\left(\R_+\times\R^d\right)^{n_i}\rightarrow \R, \quad i=1, \cdots,\: p,
\end{align}
where $I_{n_i}(h_{i})$ refers to the $n_i$-th multiple Wiener-It\^o integral. In particular, Given
an admissible Feynman diagram $\cD\in\bD_{\vec{n}}$, for $h_i$ given in \eqref{E:hi}, denote
\begin{equation}\label{E:FD}
  \begin{split}
    F_\cD(h_{1},\dots,h_{p}) =
    & \int_{\R_+^{|\vec{n}|}} \int_{\R^{d |\vec{n}|}} \ud \mathbf{t} \ud \mathbf{x} \: \prod_{i=1}^p  h_i \left(t_{(i,1)},x_{(i,1)}; \dots ; t_{(i, n_i)}, x_{(i, n_i)}\right) \\
    & \times \prod_{[(k_1, \ell_1),(k_2, \ell_2)]\in E(\cD)}\delta(t_{(k_1, \ell_1)}-t_{(k_2, \ell_2)}) \delta(x_{(k_1, \ell_1)}-x_{(k_2, \ell_2)}),
  \end{split}
\end{equation}
where we use the notations and $\ud \mathbf{t} \ud \mathbf{x} =
\prod_{i=1}^p \prod_{r_i=1}^{n_i}\ud t_{(i,r_i)}\ud x_{(i,r_i)}$. Then we have (see \cite[Theorem
5.3]{hu.wang:21:intermittency} and \cite[Theorems 5.7 and 5.8]{hu:17:analysis}),
\begin{equation}\label{E:FDF}
  \E\big[I_{n_1}(h_{1})\cdots I_{n_p}(h_{p})\big] = \sum_{\cD\in\bD_{\vec{n}}} F_\cD(h_{1},\dots,h_{p}).
\end{equation}

In particular, for any $t>0$ and $x\in\R^d$, considering the multiple Wiener-It\^o integrals
$I_k\left(f_k\left(\cdot; t,x\right)\right)$ in the chaos expansion \eqref{E:Chaos} for the solution
$u(t,x)$ with $f_k$ given in \eqref{E:fk} which is a symmetrization of $g_k$ in \eqref{E:gk}, we
have the following result (see \cite[Theorem 5.4]{hu.wang:21:intermittency}):

\begin{lemma} \label{L:product}
  Let $p\ge 1$ and $\vec{n}=(n_1,\cdots,n_p) \in \mathbb{N}^p$ be given. Fix arbitrary $t>0$ and
  $x_1,\cdots, x_p\in\R^d$. Recall that $f_n(\cdot; t,x)$ and $g_n(\cdot;t,x)$ be given in
  \eqref{E:fk} and \eqref{E:gk}, respectively. Then
  \begin{align} \label{E:product}
      & \E\left[\prod_{\ell=1}^p I_{n_\ell}\left(f_{n_\ell}\left(\cdot;t,x_\ell\right)\right)\right] \notag
    =   \sum_{\cD\in\bD_{\vec{n}}} F_\cD\left(g_{n_1}\left(\cdot;t,x_1\right),\dots,g_{n_p}\left(\cdot;t,x_p\right)\right)\notag \\
    = & \sum_{\cD\in\bD_{\vec{n}}} \int_{[0,t]^{|\vec{n}|}} \int_{\R^{d |\vec{n}|}} \ud \mathbf{t} \ud \mathbf{x}\:
        \left(\prod_{[(k_1, l_1),(k_2, l_2)]\in E(\mathcal{D})}\delta(t_{(k_1, l_1)}-t_{(k_2, l_2)}) \delta(x_{(k_1, l_1)}-x_{(k_2, l_2)})\right) \notag \\
      & \times \left( \prod_{i=1}^{p}J_0\left(t_{(i,1)}\right) \one_{\{0<t_{(i,1)}<\dots<t_{(i,n_i)}< t\}}
                      \prod_{r_i=1}^{n_i}p\big(t_{(i,r_i+1)}-t_{(i,r_i)}, x_{(i,r_i+1)}-x_{(i,r_i)}\big) \right),
  \end{align}
  where we have used the convention that
\begin{align} \label{E:Convent}
  \left(t_{(i,n_i+1)},\: x_{(i,n_i+1)}\right)=(t,x_i) \quad  \text{for all $i=1,\cdots p$.}
\end{align}
\end{lemma}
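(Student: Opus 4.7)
The starting point is the general Feynman diagram identity \eqref{E:FDF}, applied with $h_i = f_{n_i}(\cdot; t, x_i)$ for $i = 1, \ldots, p$. Square integrability of $f_{n_i}$ follows from Dalang's condition via the $L^2$ computation in \eqref{E:L2-p}, so the identity yields directly
\[
\E\left[\prod_{\ell=1}^p I_{n_\ell}\left(f_{n_\ell}(\cdot; t, x_\ell)\right)\right]
= \sum_{\cD \in \bD_{\vec{n}}} F_\cD\left(f_{n_1}(\cdot; t, x_1), \ldots, f_{n_p}(\cdot; t, x_p)\right).
\]
Thus the first equality in \eqref{E:product} reduces to showing that the sum over admissible diagrams is unchanged when each symmetrized kernel $f_{n_i}$ is replaced by its unsymmetrized counterpart $g_{n_i}$.

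For this, I expand $f_{n_i}$ via the definition \eqref{E:fk} as the average of $n_i!$ permuted copies of $g_{n_i}$ and use multilinearity of $F_\cD$ to write
\[
F_\cD(f_{n_1}, \ldots, f_{n_p}) = \frac{1}{n_1! \cdots n_p!} \sum_{(\sigma_1, \ldots, \sigma_p) \in \mathcal{P}_{n_1} \times \cdots \times \mathcal{P}_{n_p}} F_\cD\left(g_{n_1} \circ \sigma_1, \ldots, g_{n_p} \circ \sigma_p\right),
\]
where $g_{n_i} \circ \sigma_i$ denotes the pre-composition that permutes the $n_i$ pairs of arguments within row $i$. The key observation is that relabeling the vertices of row $i$ of $\cD$ by $\sigma_i^{-1}$ produces another admissible diagram $\cD^{(\vec{\sigma})} \in \bD_{\vec{n}}$ (admissibility is preserved because the $k$-coordinate of each vertex, hence the $k_1 < k_2$ edge condition, is untouched), and a relabeling change of variables in the $F_\cD$ integral gives $F_\cD\left(g_{n_1} \circ \sigma_1, \ldots, g_{n_p} \circ \sigma_p\right) = F_{\cD^{(\vec\sigma)}}(g_{n_1}, \ldots, g_{n_p})$. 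Since $\cD \mapsto \cD^{(\vec\sigma)}$ is a bijection on $\bD_{\vec{n}}$ for each fixed $\vec\sigma$, summing over $\cD$ absorbs the averaging over permutations, producing
\[
\sum_{\cD \in \bD_{\vec{n}}} F_\cD(f_{n_1}, \ldots, f_{n_p}) = \sum_{\cD \in \bD_{\vec{n}}} F_\cD(g_{n_1}, \ldots, g_{n_p}).
\]

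The second equality in \eqref{E:product} is then purely a matter of substitution: inserting the explicit form \eqref{E:gk} of $g_{n_i}$ into the definition \eqref{E:FD} of $F_\cD$, the factor $J_0(s_1)\one_{\{0<s_1<\cdots<s_{n_i}<t\}}$ becomes $J_0(t_{(i,1)})\one_{\{0<t_{(i,1)}<\cdots<t_{(i,n_i)}<t\}}$, and the chained propagator $\prod_{r=1}^{n_i} p(t_{(i,r+1)}-t_{(i,r)}, x_{(i,r+1)}-x_{(i,r)})$ arises once convention \eqref{E:Convent} is applied to handle the last factor $p(t-s_{n_i}, x_i - x_{(i,n_i)})$ in $g_{n_i}(\cdot; t, x_i)$. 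The edge-induced delta factors from \eqref{E:FD} then combine with these propagators to yield the displayed integral.

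The main obstacle is the bijection argument in the second step: one must carefully verify that permuting the $\ell$-labels of vertices in a given row of a Feynman diagram preserves the admissibility condition and induces a genuine bijection on $\bD_{\vec{n}}$, and that the accompanying change of variables in the $F_\cD$ integrals matches the effect on the edge-delta products. Once this bookkeeping is set up cleanly, the rest of the proof is a direct combination of \eqref{E:FDF}, the multilinear expansion of $F_\cD$, and the substitution of \eqref{E:gk}.
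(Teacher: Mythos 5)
Your proof is correct. Note that the paper does not actually prove Lemma \ref{L:product}: it states it as a direct consequence of the general diagram identity \eqref{E:FDF} and cites \cite[Theorem 5.4]{hu.wang:21:intermittency}, so your argument is precisely the one being outsourced to that reference. The key step you flag --- passing from the symmetrized kernels $f_{n_i}$ of \eqref{E:fk} to the unsymmetrized $g_{n_i}$ of \eqref{E:gk} by expanding the symmetrization, using multilinearity of $F_\cD$ in \eqref{E:FD}, and absorbing the row permutations into a relabeling bijection on $\bD_{\vec{n}}$ --- is handled correctly, since permuting the $\ell$-labels within a row leaves the $k$-coordinates (hence the $k_1<k_2$ edge condition) and the one-edge-per-vertex property intact, and the change of variables in the integral matches the relabeled edge deltas; the final substitution of \eqref{E:gk} with the convention \eqref{E:Convent} is routine.
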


\begin{example} \label{Ex:Diagrams}
 Let $\mathcal{D}_1$ (resp. $\mathcal{D}_2$) refer to the red (resp. blue) admissible Feynman
 diagram in Figure \ref{F:FeynmanD}. Under the setting of Lemma \ref{L:product}, let
 \begin{align*}
   F_i = F_{\mathcal{D}_i}\left(g_1\left(\cdot;t,x\right),g_2\left(\cdot;t,x\right),g_2\left(\cdot;t,x\right),g_3\left(\cdot;t,x\right)\right)
   \quad  i=1,2.
 \end{align*}
 Then we claim that $F_1=0$ because its integrand contains the following factor:
 \begin{align*}
   \one_{\{0<t_{(3,1)}<t_{(3,2)}<t\}} \one_{\{0<t_{(3,2)}<t_{(3,1)}<t_{(2,2)}<t\}}
 \end{align*}
 which is identically equal to zero. Hence, due to the delta potentials and the simplex conditions
 in \eqref{E:product}, edges starting from one column should not cross with each other. This is the
 case for $F_2$:
 \begin{align*}
 F_2  = & \int_{[0,t]^4} \ud t_{(1,1)} \ud t_{(2,1)} \ud t_{(2,2)} \ud t_{(3,2)} \int_{\R^{4d}} \ud x_{(1,1)} \ud x_{(2,1)} \ud x_{(2,2)} \ud x_{(3,2)}                               \\
        & \times J_0\left(t_{(1,1)}\right) \one_{\{0<t_{(1,1)}<t\}} p_{t-t_{(1,1)}}\left(x-x_{(1,1)}\right)                                                                     \\
        & \times J_0\left(t_{(2,1)}\right) \one_{\{0<t_{(2,1)}<t_{(2,2)}<t\}} p_{t-t_{(2,2)}}\left(x-x_{(2,2)}\right) p_{t_{(2,2)}-t_{(2,1)}}\left(x_{(2,2)}-x_{(2,1)}\right)   \\
        & \times J_0\left(t_{(1,1)}\right) \one_{\{0<t_{(1,1)}<t_{(3,2)}<t\}} p_{t-t_{(3,2)}}\left(x-x_{(3,2)}\right) p_{t_{(3,2)}-t_{(1,1)}}\left(x_{(3,2)}-x_{(1,1)}\right)   \\
        & \times J_0\left(t_{(2,1)}\right) \one_{\{0<t_{(2,1)}<t_{(2,2)}<t_{(3,2)}<t\}}                                                                                         \\
        & \times p_{t-t_{(3,2)}}\left(x-x_{(3,2)}\right) p_{t_{(3,2)}-t_{(2,2)}}\left(x_{(3,2)}-x_{(2,2)}\right) p_{t_{(2,2)}-t_{(2,1)}}\left(x_{(2,2)}-x_{(2,1)}\right)              \\
      = & \int_{[0,t]^4} \ud t_{(1,1)} \ud t_{(2,1)} \ud t_{(2,2)} \ud t_{(3,2)} \int_{\R^{4d}} \ud x_{(1,1)} \ud x_{(2,1)} \ud x_{(2,2)} \ud x_{(3,2)}                               \\
        & \times J_0^2\left(t_{(1,1)}\right) \one_{\{0<t_{(1,1)}<t_{(3,2)}<t\}} p_{t-t_{(1,1)}}\left(x-x_{(1,1)}\right) p_{t_{(3,2)}-t_{(1,1)}}\left(x_{(3,2)}-x_{(1,1)}\right) \\
        & \times J_0^2\left(t_{(2,1)}\right) \one_{\{0<t_{(2,1)}<t_{(2,2)}<t_{(3,2)}<t\}} p_{t-t_{(2,2)}}\left(x-x_{(2,2)}\right)                                               \\
        & \times p^2_{t-t_{(3,2)}}\left(x-x_{(3,2)}\right) p_{t_{(3,2)}-t_{(2,2)}}\left(x_{(3,2)}-x_{(2,2)}\right)  p^2_{t_{(2,2)}-t_{(2,1)}}\left(x_{(2,2)}-x_{(2,1)}\right).
 \end{align*}
 Note that the original $2\times 8$-multiple integral has been collapsed to the above $2\times
 4$-multiple integral. The remaining variables are the roots of all edges in $E(\mathcal{D})$.
\end{example}

\begin{definition} \label{D:Balanced}
  For any $m\in \mathbb{N}$ and $p\in 2\mathbb{N}$, we say that $\vec{n}=(n_1,\cdots,n_p)$ is a {\it
  balanced partition} of $2m$ if
  \begin{enumerate}
    \item $|\vec{n}|=2m$;
    \item $n_i\in \{m_p, m_p+1\}$ for all $i=1,\cdots, p$, where $m_p:=\Floor{2m/p}$;
    \item $ n_1+\cdots + n_{p/2}=m$;
    \item $r_p\in[0,p)$ is the remainder of $2m/p$.
  \end{enumerate}
  Moreover, under this setting, an admissible Feynman diagram $\mathcal{D}=\left(V,E\right)$ is
  called a {\it balanced diagram} provided
  \begin{align*}
   \left[(k_1, \ell_1), (k_2,\ell_2)\right] \in E(\cD) \quad \Longrightarrow \quad \ell_1=\ell_2 \quad \text{and} \quad k_1\le p/2<k_2.
  \end{align*}
  The set of all balanced diagrams is denoted by $\mathbb{D}_{\vec{n}}^=$. It is clear that
  $\mathbb{D}_{\vec{n}}^= \subset \mathbb{D}_{\vec{n}}$.
\end{definition}

It is straightforward to show the existence  of a balanced partition, which is however not unique in
general. Let us check a few examples:

\begin{example}
  \begin{itemize}[wide=0pt]
    \item[(1)] In Figure \ref{F:FeynmanD}, we have $p=m=4$. The partition $\vec{n}=(1,2,2,3)$ is not
      a balanced partition. Indeed, for this example, the only balanced partition is
      $\vec{n}=(2,2,2,2)$.
    \item[(2)] If $p=4$ and $m=3$, the following partitions are all balanced:
      \begin{align*}
         \left(1,2,2,1\right), \quad \left(2,1,2,1\right), \quad \left(1,2,1,2\right).
      \end{align*}
      However, $\left(1,1,2,2\right)$ is not balanced.
    \item[(3)] If $p=6$ and $m=7$, it is easy to check that $\vec{n}=\left(3,2,2,2,3,2\right)$ is a
      balanced partition, upon which a balanced diagram is given; see Figure \ref{F:Balanced}.
  \end{itemize}
\end{example}

\begin{figure}[htpb]
  \begin{center}
    \begin{tikzpicture}[scale=1, x=4em, y=3em]
      \tikzset{>=latex}
      \draw[->,thick] (-1,0) -- (7,0) node [right] {$k$};
      \draw[->,thick] (0,-0.2) -- (0,4.5) node [above] {$\ell$};
      \foreach \x in {1,...,6}{
          \draw (\x,0.1)--++(0,-0.2) node [below] {$\x$};
      }
      \draw (0.1,1)--++(-0.2,0) node [left] {$1$};
      \draw (0.1,2)--++(-0.2,0) node [left] {$m_p=2$};
      \draw (0.1,3)--++(-0.2,0) node [left] {$m_p+1=3$};
      \draw (0.1,4)--++(-0.2,0) node [left] {$4$};
      \node[red ] (1p1) at (1,1) {$(1,1)$};
      \node[red ] (1p2) at (1,2) {$(1,2)$};
      \node[red ] (1p3) at (1,3) {$(1,3)$};
      \node[red ] (2p1) at (2,1) {$(2,1)$};
      \node[red ] (2p2) at (2,2) {$(2,2)$};
      \node[red ] (3p1) at (3,1) {$(3,1)$};
      \node[red ] (3p2) at (3,2) {$(3,2)$};
      \node[blue] (4p1) at (4,1) {$(4,1)$};
      \node[blue] (4p2) at (4,2) {$(4,2)$};
      \node[blue] (5p1) at (5,1) {$(5,1)$};
      \node[blue] (5p2) at (5,2) {$(5,2)$};
      \node[blue] (5p3) at (5,3) {$(5,3)$};
      \node[blue] (6p1) at (6,1) {$(6,1)$};
      \node[blue] (6p2) at (6,2) {$(6,2)$};
      \node[] (t) at (3.5,4.6) {$\vec{n}=\left(3,2,2,2,3,2\right)$};
      \node[] (7p3) at (7.5,3) {$r_p=2$};
      \draw[dotted] (8,2.5) -- (7,2.5) --++(0,1.0) --++ (0,-3.5);
      \node[text width=5em] (7p15) at (7.5,1.5) {
          \begin{gather*}
            2m_pp\\ || \\ 12
          \end{gather*}
        };

      \node[gray!50!white] (1p4) at (1,4) {$(1,4)$};
      \node[gray!50!white] (2p3) at (2,3) {$(2,3)$};
      \node[gray!50!white] (3p3) at (3,3) {$(3,3)$};
      \node[gray!50!white] (4p3) at (4,3) {$(4,3)$};
      \node[gray!50!white] (5p4) at (5,4) {$(5,4)$};
      \node[gray!50!white] (6p3) at (6,3) {$(6,3)$};
      \draw[gray!50!white] (1p4) -- (2p3) -- (3p3) -- (4p3) -- (5p4) -- (6p3);

      \draw[->] (1p3) .. controls (3, 3.5) .. (5p3);
      \draw[->] (1p2) .. controls (2, 3) and (5,3) .. (6p2);
      \draw[->] (2p2) .. controls (3, 2.5) .. (4p2);
      \draw[->] (3p2) .. controls (4, 1.5) .. (5p2);
      \draw[->] (1p1) .. controls (2, 0.2) and (4, 0.2) .. (5p1);
      \draw[->] (2p1) .. controls (3, 1.5) and (5, 1.5) .. (6p1);
      \draw[->] (3p1) .. controls (3.5, 0.4) .. (4p1);

      \draw[dashed] (1,0) -- (1p1) -- (1p2) -- (1p3); \draw[dashed,gray!50!white] (1p3) -- (1p4);
      \draw[dashed] (2,0) -- (2p1) -- (2p2);          \draw[dashed,gray!50!white] (2p2) -- (2p3);
      \draw[dashed] (3,0) -- (3p1) -- (3p2);          \draw[dashed,gray!50!white] (3p2) -- (3p3);
      \draw[dashed] (4,0) -- (4p1) -- (4p2) -- (4p3); \draw[dashed,gray!50!white] (4p3) -- (4p4);
      \draw[dashed] (5,0) -- (5p1) -- (5p2) -- (5p3); \draw[dashed,gray!50!white] (5p3) -- (5p4);
      \draw[dashed] (6,0) -- (6p1) -- (6p2);          \draw[dashed,gray!50!white] (6p2) -- (6p3);

      \node[] at (6.35,-0.37) {$=p$};
      \draw[dotted, thick] (3.5,4.3) -- ++ (0,-6.5);
      \node[] () at (2,-1) {$p/2=3$ columns};
      \node[] () at (5,-1) {$p/2=3$ columns};
      \node[red ] () at (2,-1.7) {$m=7$ starting vertices};
      \node[blue] () at (5,-1.7) {$m=7$ ending vertices};
    \end{tikzpicture}
  \end{center}

  \caption{One example of the balanced partition in case of $m=7$ and $p=6$ with a balanced diagram
  (all edges are horizontal starting from the left half of the vertices pointing to the right half).
The grayed-out vertices correspond to the convention \eqref{E:Convent}.}

  \label{F:Balanced}
\end{figure}
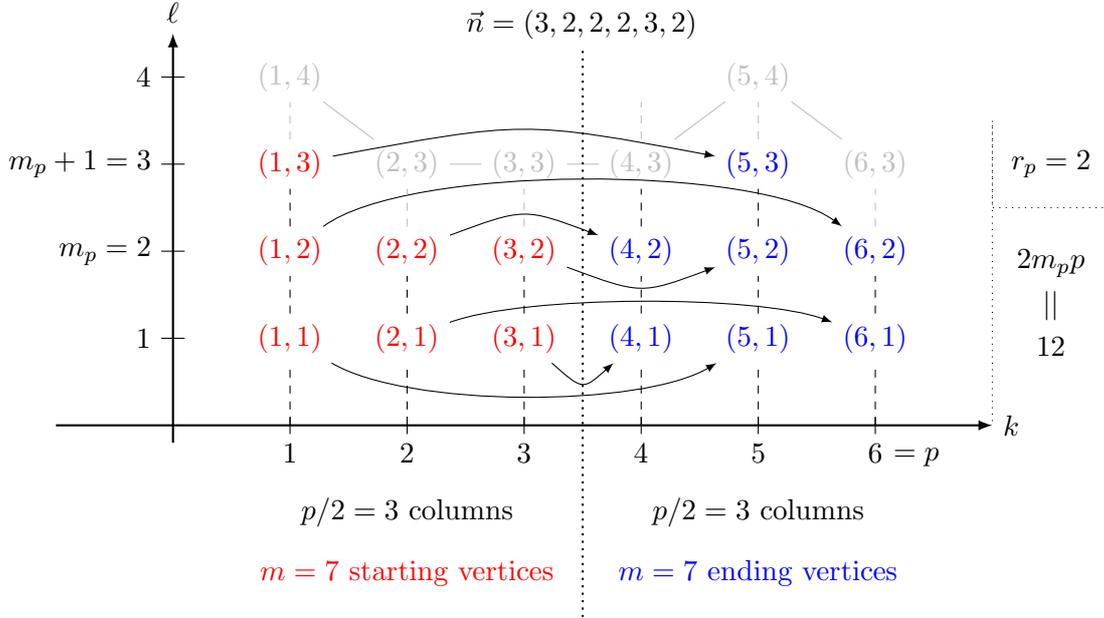

\subsection{Proof of the lower bounds} \label{SS:Lower}

In this subsection, we derive a lower bound for $\E\left[u(t,x)^p\right]$ which is consistent with
the upper bound obtained in Theorem \ref{T:fde}; see also \eqref{E:upper-tplim}.

\begin{theorem}\label{T:lower-bd}
  Assume that
  \begin{enumerate}[wide=0pt]
    \item either $\beta\in(0,2)$ and the fundamental function $p(t,x)$ is nonnegative or
      $\alpha=\beta=2$ and $\gamma=0$;
    \item the initial position $u_0$ is strictly positive and the initial velocity $u_1$ is
      nonnegative;
    \item Dalang's condition \eqref{E:Dalang'} is satisfied.
  \end{enumerate}
  Then we have for all $t>0$, $x_1,\cdots,x_p\in\R^d$, and $p\in 2\mathbb{N}$ such that $t_p = t\:
  p^{1+1/(\theta+1)}$ (see \eqref{E:theta}) is sufficiently large (recall that $\theta$ is given in
  \eqref{E:theta}), there exist constants $c_1$ and $c_2$ that do not depend on $(t,x_1,\cdots,
  x_p,p)$ such that
  \begin{equation}\label{E:lb}
    \E\left[\prod_{j=1}^{p} u(t,x_j)\right]\geq  c_1\exp\left( c_2\: t\: p^{1+1/(\theta+1)} \right).
  \end{equation}
\end{theorem}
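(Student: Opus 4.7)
The strategy, following Hu and Wang~\cite{hu.wang:21:intermittency}, is to expand $\prod_{j=1}^{p}u(t,x_j)$ via the chaos representation \eqref{E:Chaos} and the Feynman diagram formula of Lemma \ref{L:product}, then lower-bound the expectation by keeping only a carefully chosen subclass of diagrams. Under hypotheses (1) and (2) the factor $J_0\geq u_0>0$ and every kernel $p(t-s,x-y)$ is nonnegative, so every summand in the expansion is nonnegative and
\begin{equation*}
  \E\left[\prod_{j=1}^{p}u(t,x_j)\right] \;\geq\; \lambda^{2m}\sum_{\mathcal{D}\in\mathbb{D}^{=}_{\vec n}}F_{\mathcal{D}}\bigl(g_{n_1}(\cdot;t,x_1),\ldots,g_{n_p}(\cdot;t,x_p)\bigr)
\end{equation*}
for any balanced partition $\vec n$ of $2m$ (Definition \ref{D:Balanced}), where I have dropped all non-balanced diagrams and all unmatched chaos levels.

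To bound each $F_{\mathcal{D}}$ from below I would discretize both time and space. Partition $[0,t]$ into $M\simeq 2m/p$ equal sub-intervals of length $h:=t/M$, and choose $\epsilon>0$ so that $2\epsilon^{\alpha/\beta}\leq h\leq c\,\epsilon^{\alpha/\beta}$ for some fixed $c>2$. For each column $k$ and row $\ell$, restrict $t_{(k,\ell)}$ to the $\ell$-th sub-interval and $x_{(k,\ell)}$ to a fixed ball $B_\epsilon(x)$. This assignment automatically enforces the simplex constraint $0<t_{(k,1)}<\cdots<t_{(k,n_k)}<t$ in every column and places both endpoints of every horizontal edge of a balanced diagram in a common time slab and a common spatial ball. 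After the spatial delta collapses the two kernels at each edge into a single integral over the ball, Proposition~\ref{P:nondeg} yields a lower bound of order $C\epsilon^{-d}h^{2(\beta+\gamma-1)}$ per edge; integrating the surviving free time variable over its slab adds a factor $h$; using $h\simeq\epsilon^{\alpha/\beta}$ and the definition of $\theta$ in \eqref{E:theta} produces a per-edge gain of order $c\,h^{\theta+1}$. Multiplying over the $m$ edges and using $J_0(t_{(i,1)})\geq u_0$ in every column gives
\begin{equation*}
  F_{\mathcal{D}}(\cdots)\;\geq\;u_0^{\,p}\bigl(c\,h^{\theta+1}\bigr)^{m}.
\end{equation*}

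The number of balanced diagrams compatible with $\vec n$ is at least $\bigl((p/2)!\bigr)^{2m/p}$, since in each of the $2m/p$ rows one may independently choose a bijection between the $p/2$ left-half vertices and the $p/2$ right-half vertices. Combining with $h=tp/(2m)$ yields
\begin{equation*}
  \E\left[\prod_{j=1}^{p}u(t,x_j)\right]\;\geq\;u_0^{\,p}\,\lambda^{2m}\bigl((p/2)!\bigr)^{2m/p}\Bigl(c\,(tp/(2m))^{\theta+1}\Bigr)^{m}.
\end{equation*}
Taking logarithms and applying Stirling's formula $\log((p/2)!)\sim (p/2)\log(p/2)$, the bracket in the resulting expression reduces, after the choice $m=A\,t\,p^{1+1/(\theta+1)}$, to a constant of the form $B-(\theta+1)\log(2A)$, where $B$ depends only on $\lambda,c,\theta$; choosing $A>0$ small enough makes this constant strictly positive, producing the desired bound $c_1\exp(c_2\,t\,p^{1+1/(\theta+1)})$ (the prefactor $u_0^{\,p}$ is absorbed into the constants once $t_p$ is large enough that the dominant term overwhelms it).

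The principal technical obstacle is performing the joint restriction of times and positions so that it is simultaneously compatible with the simplex orderings within every column, the hypotheses $s,t\in[2\epsilon^{\alpha/\beta},c\,\epsilon^{\alpha/\beta}]$ and $a,b\in B_\epsilon(x)$ of Proposition \ref{P:nondeg}, and the horizontal-edge structure of balanced diagrams. A further delicate point is the bookkeeping for the non-divisible case $p\nmid 2m$, handled by the remainder $r_p$ in Definition~\ref{D:Balanced}, which affects only constants and not the exponential scaling. Finally, the same argument applied to $p=p'$ and $x_1=\cdots=x_{p'}=x$ yields $\E[u(t,x)^{p'}]\geq c_1\exp(c_2\,t_{p'})$, from which \eqref{E:lower-tplim} follows and, by freezing either $p$ or $t$, so do \eqref{E:lower-tlim}--\eqref{E:lower-plim}.
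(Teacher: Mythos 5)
Your proposal is correct and follows essentially the same route as the paper's proof: chaos expansion, restriction to balanced partitions and balanced Feynman diagrams, time-slab discretization calibrated so that Proposition \ref{P:nondeg} applies with $\e\asymp (pt/m)^{\beta/\alpha}$, the $\left((p/2)!\right)^{m_p}$ diagram count, and Stirling. The only immaterial difference is that you evaluate a single near-optimal chaos level $m\asymp t\,p^{1+1/(\theta+1)}$, whereas the paper keeps the sum over all $m\ge p/2$ and invokes Lemma \ref{L:exp-1}; since every term is nonnegative, both yield the same exponential rate.
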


\begin{proof}
  The proof is based on the Feynman diagram formula for the $p$-th moments and the non-degeneracy
  property of the Green's function -- Proposition \ref{P:nondeg}, which is inspired by \cite[Theorem
  3.6]{hu.wang:21:intermittency}. Choose an arbitrary even integer $p$ and let $t>0$ and
  $x_1,\cdots, x_p\in\R^d$ be fixed. By \eqref{E:Chaos}, we have
  \begin{align} \label{E:uFC}
    \E\left[\prod_{j=1}^{p}u(t,x_j)\right]
    & = \E\left[\prod_{j=1}^{p}\sum_{n_j=0}^{\infty}I_{n_j}(f_{n_j}(\cdot,t,x_j))\right] \notag                                                \\
    & = \sum_{n_1=0}^{\infty}\cdots \sum_{n_p=0}^{\infty}\E\Big[I_{n_1}(f_{n_1}(\cdot,t,x_1)) \cdots I_{n_p}(f_{n_p}(\cdot,t,x_p))\Big] \notag \\
    & = \sum_{m=0}^{\infty} \sum_{\substack{\vec{n}\in \mathbb{N}^p \cr |\vec{n}|=2m }}\sum_{\cD\in\bD_{\vec{n}}} F_\cD\left(g_{n_1}\left(\cdot;t,x_1\right),\dots,g_{n_p}\left(\cdot;t,x_p\right)\right),
  \end{align}
  where we have used the convention that $I_0(f_0(\cdot,t,x)) = J_0(t)$. We will find the lower
  bound in three steps: \bigskip

  {\noindent\bf Step 1.~} We first take care of the three summations in \eqref{E:uFC}. Applying the
  Feynman diagram formula in Lemma \ref{L:product} and noting that $\inf_{s\in[0,t]}J_0(s) \ge
  u_0$ (see \eqref{E:J0(t)}), we have
  \begin{align} \label{E:SumUlow}
     \E\left[\prod_{j=1}^{p}u(t,x_j)\right]
     =  & \sum_{m=0}^{\infty} \sum_{\substack{\vec{n}\in \mathbb{N}^p \\ |\vec{n}|=2m }}\sum_{\cD\in\bD_{\vec{n}}} F_\cD\left(g_{n_1}\left(\cdot;t,x_1\right),\dots,g_{n_p}\left(\cdot;t,x_p\right)\right)\notag\\
    \ge & c_0^{p}\: \sum_{m=p/2}^\infty \sum_{\substack{\vec{n}\in \mathbb{N}^p \\ |\vec{n}|=2m\\ \text{$\vec{n}$ is balanced}}}\sum_{\cD\in\bD_{\vec{n}}^=} I_0, \quad \text{with}
  \end{align}
  \begin{align} \label{E:I_0}
    \begin{aligned}
    I_0 := & \int_{[0,t]^{2m}}  \int_{\R^{2md}} \left(\prod_{[(k_1, l_1),(k_2, l_2)]\in E(\mathcal{D})}\delta\left(t_{(k_1, l_1)}-t_{(k_2, l_2)}\right) \delta\left(x_{(k_1, l_1)}-x_{(k_2, l_2)}\right)\right) \\
           & \times \prod_{i=1}^{p}\one_{\left\{0<t_{(i,1)}<\dots<t_{(i,n_i)}< t\right\}} \prod_{r_i=1}^{n_i}p\big(t_{(i,r_i+1)}-t_{(i,r_i)}, x_{(i,r_i+1)}-x_{(i,r_i)}\big) \ud\mathbf{t} \ud \mathbf{x},
    \end{aligned}
  \end{align}
  where we have used the assumption that $p(t,x)$ is nonnegative and the convention
  \eqref{E:Convent}. Here we emphasize that:
  \begin{enumerate}
    \item in the first summation of \eqref{E:SumUlow}, we require $m\ge p/2$;
    \item in the second summation of \eqref{E:SumUlow}, we only consider the balanced partitions of
      $2m$;
    \item in the third summation of \eqref{E:SumUlow}, we restrict us to the balanced diagrams
      $\mathbb{D}_{\vec{n}}^=$.
  \end{enumerate}

  One may check Figure \ref{F:Balanced} for some examples of the selected Feynman diagrams. Recall
  that $m_p=\Floor{2m/p}$ and $r_p$ is the remainder of $2m/p$ (see Definition \ref{D:Balanced}), namely,
  \begin{align*}
    2m = m_p \times p  + r_p, \quad \text{with $0\le r_p<p$}.
  \end{align*}
  It is easy to see that $r_p$ has to be an even integer. With these restrictions, for each fixed
  $m\ge p/2$, one can check that the total number of diagrams satisfying (2) and (3) is at least
  $\left( \left(p/2\right)!\right)^{m_p} \times \left(r_p/2\right)!$. \bigskip

  {\noindent\bf Step 2.~} Now we proceed to shrink the integral region for the $\ud
  \mathbf{t}$-integral of $I_0$ in \eqref{E:I_0} as follows: Denote $L=\frac{t}{m_p+1}$,
  $t_i=\frac{(2i-1)t}{2(m_p+1)}$, $a_i=t_i-L/4$ and $b_i=t_i+L/4$ for $i=1,\dots,m_p+1$. Let
  $I_i=[a_i,b_i]$. Then these intervals $I_i$ are disjoint with length $L/2$; See Figure
  \ref{F:Time} for an illustration.

  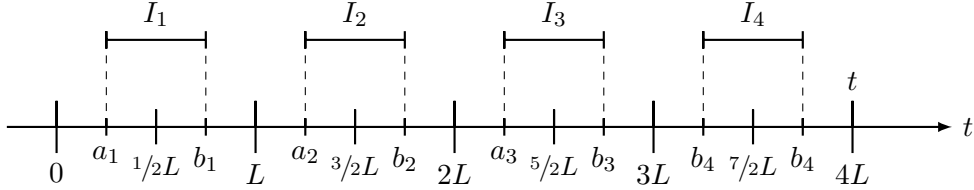
\begin{figure}[htpb]
    \begin{center}
      \begin{tikzpicture}[scale=1, x=1.7em, y=3em]
        \tikzset{>=latex}
        \draw[->,thick] (-1,0) -- (18,0) node [right] {$t$};

        \foreach \x in {1,5,9,13}{
            \draw[thick] (\x,1) --++ (0,-0.1) --++ (0,0.2) -- ++(0,-0.1) --++ (2,0)++ (0,-0.1) --++ (0,0.2);
            \draw[dashed] (\x,0) -- ++ (0,1) ++ (2,0) --++(0,-1);
        }
        \def\inc{0.3}
        \draw[thick] (0,0)--++(0,\inc) --++(0,-2*\inc) node [below] {$0$};
        \draw[thick] (4,0)--++(0,\inc) --++(0,-2*\inc) node [below] {$L$};
        \draw[thick] (8,0)--++(0,\inc) --++(0,-2*\inc) node [below] {$2L$};
        \draw[thick] (12,0)--++(0,\inc) --++(0,-2*\inc) node [below] {$3L$};
        \draw[thick] (16,0)--++(0,\inc) node [above] {$t$} --++(0,-2*\inc) node [below] {$4L$};

        \def\inc{0.2}
        \draw[thick] (2,0) --++(0,\inc) node [above,yshift=2.5em] {$I_1$} --++(0,-2*\inc) node [below] {\small $\sfrac{1}{2}L$};
        \draw[thick] (6,0) --++(0,\inc) node [above,yshift=2.5em] {$I_2$} --++(0,-2*\inc) node [below] {\small $\sfrac{3}{2}L$};
        \draw[thick] (10,0)--++(0,\inc) node [above,yshift=2.5em] {$I_3$} --++(0,-2*\inc) node [below] {\small $\sfrac{5}{2}L$};
        \draw[thick] (14,0)--++(0,\inc) node [above,yshift=2.5em] {$I_4$} --++(0,-2*\inc) node [below] {\small $\sfrac{7}{2}L$};

        \def\inc{0.1}
        \draw[thick] (1,0) --++(0,\inc) --++(0,-2*\inc) node [below] {$a_1$};
        \draw[thick] (3,0) --++(0,\inc) --++(0,-2*\inc) node [below] {$b_1$};
        \draw[thick] (5,0) --++(0,\inc) --++(0,-2*\inc) node [below] {$a_2$};
        \draw[thick] (7,0) --++(0,\inc) --++(0,-2*\inc) node [below] {$b_2$};
        \draw[thick] (9,0) --++(0,\inc) --++(0,-2*\inc) node [below] {$a_3$};
        \draw[thick] (11,0)--++(0,\inc) --++(0,-2*\inc) node [below] {$b_3$};
        \draw[thick] (13,0)--++(0,\inc) --++(0,-2*\inc) node [below] {$b_4$};
        \draw[thick] (15,0)--++(0,\inc) --++(0,-2*\inc) node [below] {$b_4$};

      \end{tikzpicture}
    \end{center}
    \caption{Some illustrations for Step 2 in the proof of Theorem \ref{T:lower-bd} with $m_p=4$.}
    \label{F:Time}
  \end{figure}

  For the integral with respect to time variables in \eqref{E:p-mom}, we only integrate on the
  region where for each $i\in\{1, \dots, p\}$, $t_{(i, r_i)}$ is in $I_i$ for $1\leq r_i\leq n_i$,
  and hence
  \begin{equation}\label{split-t}
    \frac{t}{2(m_p+1)}=\frac12 L\leq t_{(i, r_i+1)}-t_{(i, r_i)}\leq \frac32 L=\frac{3t}{2(m_p+1)}.
  \end{equation}
  Then, for each integer $m\ge p/2$, by choosing
  \begin{equation}\label{E:em}
    \e:= \left( p\: t/16m\right)^{\beta/\alpha},
  \end{equation}
  we have that
  \begin{equation}\label{split-t'}
    t_{(i, r_i+1)}-t_{(i, r_i)}\in\left[2\: \e^{\alpha/\beta},12\: \e^{\alpha/\beta}\right],
    \quad i=1,\cdots,p.
  \end{equation}
  \bigskip

  {\bf\noindent Step 3.~} Now we study the spatial integral portion of $I_0$ in \eqref{E:I_0}, which
  is equal to
  \begin{align*}
    \int_{\R^{2md}}\ud \mathbf{x} \:
    \prod_{i=1}^{p}\prod_{r_i=1}^{n_i}p\big(t_{(i,r_i+1)}-t_{(i,r_i)}, x_{(i,r_i+1)}-x_{(i,r_i)}\big)
    \left(\prod_{[(k_1, l_1),(k_2, l_2)]\in E(\cD)}\delta(x_{(k_1, l_1)}-x_{(k_2, l_2)})\right).
  \end{align*}
  It is bounded from below if one replaces the integral region $\R^{2md}$ by
  $\left(B_{\varepsilon}^2(x)\right)^{m}$ for any $\e>0$. In particular, by Step 2, we see that
  $\mathbf{t} $ satisfies $t_{(i, r_i)}\in I_i$ for $1\le r_i \le n_i, 1\le i \le p$, i.e.,
  \eqref{split-t'} holds true. Hence, we can apply Proposition \ref{P:nondeg} with $\e$ given in
  \eqref{E:em} and $c=12$ to bound the above integral from be as follows:
  \begin{align} \label{E:xLowBd}
    \ge C^m \left(pt /m\right)^{-\beta md/\alpha}  \prod_{i=1}^{p}\prod_{r_i=1}^{n_i} |t_{(i,r_i+1)}-t_{(i,r_i)}|^{\beta+\gamma-1}.
  \end{align}
  Therefore, we can find a lower bound of $I_0$ in \eqref{E:I_0} with only time integral:
  \begin{align} \label{E:Step3}
    \begin{aligned}
    I_0 \ge C^m \left(pt /m\right)^{-\beta md/\alpha} \int_{[0,t]^{2m}}
    & \ud \mathbf{t}\: \prod_{i=1}^{p} \prod_{r_i=1}^{n_i} |t_{(i,r_i+1)}-t_{(i,r_i)}|^{\beta+\gamma-1} \one_{\{t_{(i,r_{i})}\in I_i\}}\\
    & \times \left(\prod_{[(k_1, l_1),(k_2, l_2)]\in E(\cD)}\delta(t_{(k_1, l_1)}-t_{(k_2, l_2)})\right).
    \end{aligned}
  \end{align}
  \bigskip

  {\bf\noindent Step 4.~} Finally, we will carry out the remaining $\ud \mathbf{t}$-integral in
  \eqref{E:Step3} and complete the proof. We will use $C$ to denote a generic constant that does not
  depend on $\left(t,p,m\right)$ and whose value may change at each appearance. Now denote the
  integral in \eqref{E:Step3} by $I$, which can be bound from below as follows:
  \begin{align*} 
    I & \ge C^{m} L^{2m(\beta+\gamma-1)} \int_{[0,t]^{2m}} \ud \mathbf{t} \: \prod_{i=1}^{p} \prod_{r_i=1}^{n_i}  \one_{\{t_{(i,r_{i})}\in I_i\}} \left(\prod_{[(k_1, l_1),(k_2, l_2)]\in E(\cD)}\delta(t_{(k_1, l_1)}-t_{(k_2, l_2)})\right) \notag \\
      & = C^{m} L^{2m(\beta+\gamma-1)}\left(\frac{L}{2}\right)^m = C^{m} \left(\frac t{m_p}\right)^{m(2\beta+2\gamma-1)}.
  \end{align*}
  Replace the space-time integral in \eqref{E:SumUlow} by the above lower bound, together with the
  factor in front of the integral in \eqref{E:Step3}, to see that
  \begin{align*} 
    \E\left[\prod_{j=1}^{p}u(t,x_j)\right]
      & \ge c_0^{p}\: \sum_{m\ge p/2} \sum_{\substack{\vec{n}\in \mathbb{N}^p \\ |\vec{n}|=2m\\ \text{$\vec{n}$ is balanced}}}\sum_{\cD\in\bD_{\vec{n}}^=} C^m \left(\frac{pt}{m}\right)^{-\beta d m/\alpha}  \left(\frac t{m_p}\right)^{m(2\beta+2\gamma-1)}  \\
      & \ge c_0^p \sum_{m\ge p/2}  C^m  \left(\frac{pt}{m}\right)^{-\beta d m/\alpha}   \left(\frac t{m_p}\right)^{m(2\beta+2\gamma-1)} \left( \left(p/2\right)!\right)^{m_p} \times \left(r_p/2\right)!,
  \end{align*}
  where we have used the fact that there are at least $\left( \left(p/2\right)!\right)^{m_p} \times \left(r_p/2\right)!$
  terms in the double summations.

  Thanks to the following well known bounds to the Gamma function, which is related to the Stirling
  formula (see, e.g., 5.1.10 on p. 141 of \cite{olver.lozier.ea:10:nist})
  \begin{align} \label{E:Stirling}
    \sqrt{2\pi n} \left(\frac{n}{e}\right)^n < n! < 2 \sqrt{2\pi n} \left(\frac{n}{e}\right)^n, \quad
    \text{for all $n\ge 1$},
  \end{align}
  we see that up to a constant, one can replace $n!$ by $\sqrt{2\pi n} \left(n/e\right)^n$. Hence, by \eqref{E:Stirling} and the fact $\left(r_p/p\right)^{r_p/2}\ge e^{-\frac{p}{2e}}\ge C^m$, we have
  \begin{equation*}
    \begin{split}
       \left(\left(p/2\right)!\right)^{m_p}\cdot \left(r_p/2\right)! & \ge C^m \left(p\right)^{(p\:m_p)/2}\cdot (p)^{r_p/2} \left(r_p/p\right)^{r_p/2} \\
         &\ge C^m p^{\frac{p\times m_p+r_p}{2}} = C^m p^m.
    \end{split}
  \end{equation*}

  Then bound $t/m_p$ in the above lower bound from below by $pt/(2m)$ and put $c_0^p$ into $C^m$, to see that
  \begin{align*}
    \E\left[\prod_{j=1}^{p}u(t,x_j)\right]
      & \ge \sum_{m\ge p/2} C^m  \left(\frac{pt}{m}\right)^{-\beta d m/\alpha} \left(\frac{pt}{2m}\right)^{m(2\beta+2\gamma-1)} p^m\\
      & \ge \sum_{m\ge p/2}  \left(\frac{\left(Cp^{1+\frac{1}{\theta+1}}t\right)^m}{m!}\right)^{\left(\theta+1\right)} .
  \end{align*}
  Let $n:=Cp^{1+\frac{1}{\theta+1}}t$, if $n$ is sufficiently large, we have $p/2\le n$, then for sufficiently large $n$,
  \begin{align*}
    \E\left[\prod_{j=1}^{p}u(t,x_j)\right]
      & \ge \sum_{m\ge p/2}  \left(\frac{n^m}{m!}\right)^{\left(\theta+1\right)}
       \ge \sum_{m\ge n} \left(\frac{n^m}{m!}\right)^{\left(\theta+1\right)}
       \ge c_1 \exp\left(c_2 p^{1+\frac{1}{\theta+1}}t\right)
  \end{align*}
  where the third inequality follows from the Lemma \ref{L:exp-1}. This completes the proof of Theorem \ref{T:lower-bd}.
 \end{proof}

Finally, let us explain in the following remark why the space-time white noise  case requires a separate treatment.
\begin{remark} \label{R:Hu-Wang}
  The lower bounds for equations with space-time colored noise whose covariance function is given by
  \eqref{E:NoiseCC} were obtained in Hu-Wang \cite{hu.wang:21:intermittency}, by which our
  methodology is inspired.  Here it is important to make a distinction in the treatment between the
  colored noise case and the white noise case: Firstly, in the white noise case, the balanced
  Feynman diagrams (see Definition \ref{D:Balanced}) make the right contribution to the desired
  lower bound, and this is different from the colored noise case (see Step 1 in the proof of Theorem
  3.6 {\it ibid.}). Secondly, Hu-Wang's proof relies heavily on the assumption $\gamma(t) \ge C
  |t|^{-\theta}$ and $\Lambda(x)\ge C |x|^{-\lambda}$ for small values of $t$ and $x$ (see Step 2 in
  the proof Theorem 3.5 {\it ibid.}), which does not hold for the white noise case. As a
  consequence, the small ball nondegeneracy property for Green's function (see Section 3.1 {\it
  ibid.}) which plays a key role in Hu-Wang's argument does not apply to the white noise case. To
  resolve this issue, we develop a similar nondegeneracy property for the product of Green's
  functions (see Proposition \ref{P:nondeg}).
\end{remark}

\appendix
\section{Preliminaries on fractional integrals and derivatives} \label{S:Prelim}

In this section, we provide some preliminaries on fractional integrals and derivatives in the sense
of Riemann-Liouville and we also recall Caputo fractional derivatives. We refer to
\cite{kilbas.srivastava.ea:06:theory,podlubny:99:fractional} for details.

Let $\alpha\ge 0$ be a constant and $[a,b]$ be a finite interval on $\bR$. Let $f(x)$ be a
complex-valued function defined on $[a,b]$. We only recall the left-sided integrals/derivatives
which will be used in this article, and the right-sided case is similar and thus omitted.
\begin{definition} \label{def:R-L-I}
  The {\em Riemann-Liouville integral} $I_{a+}^{\alpha}f$ of order $\alpha\ge 0$ is defined by
  \begin{equation}\label{def-R-I}
    (I_{a+}^{\alpha}f)(x) := \frac{1}{\Gamma(\alpha)}\int_{a}^{x}\frac{f(t)}{(x-t)^{1-\alpha}}\ud t, \quad x\in[a,b].
  \end{equation}
\end{definition}

\begin{definition}\label{def:D}
  The {\em Riemann-Liouville derivative} $D_{a+}^{\alpha}f$ of order $\alpha\in \R_+\setminus
  \mathbb{N}$ is defined by
  \begin{align*}
    \left(D_{a+}^{\alpha}f\right)(x)
    : = \frac{\ud^n}{\ud x^n}\left(I_{a+}^{n-\alpha}f\right)(x)
      = \frac{1}{\Gamma\left(n-\alpha\right)}\frac{\ud^n}{\ud x^n}
        \int_{a}^{x}\frac{f(t)}{(x-t)^{n-\alpha}} \ud t, \quad n = \Ceil{\alpha},
  \end{align*}
  and when $\alpha= n\in \mathbb{N}$, $\left(D_{a+}^{\alpha}f\right)(x) = \dfrac{\ud^n}{\ud
  t^n}f(t)$.  We use the convention that $D_{a+}^{\alpha}f:= I_{a+}^{-\alpha}f$, when $\alpha<0$.
\end{definition}
For $1\leq p\leq\infty$, we denote by $L^p(a,b)$ the set of complex-valued functions $f$ on $[a,b]$
with finite $L^p$-norm $\Vert f\Vert_p$, where
\begin{equation*}
 \Vert f\Vert_p =
 \begin{cases}\displaystyle{\left( \int_{a}^{b}|f(x)|^pdt\right)^\frac{1}{p}}, & 1\leq p<\infty, \vspace{0.3cm}  \\
\displaystyle{ \esssup_{a\leq x\leq b}|f(x)|}, & p=\infty.
  \end{cases}
\end{equation*}

\begin{lemma}[Property 2.2 on p. 74 of \cite{kilbas.srivastava.ea:06:theory}]\label{L:I-D}
  For $\alpha>\beta>0$ and $f(x) \in L^p(a,b), 1\leq p \leq\infty$, we have
  \begin{align*}
    \left(D_{a+}^{\beta}I_{a+}^{\alpha}f\right)(x) = I_{a+}^{\alpha-\beta}f(x), \quad \text{for $x\in[a,b]$ almost everywhere.}
  \end{align*}
\end{lemma}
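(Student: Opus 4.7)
The strategy is to combine two well-known properties of Riemann--Liouville operators: the semigroup law $I_{a+}^{\mu}I_{a+}^{\nu}=I_{a+}^{\mu+\nu}$ for $\mu,\nu>0$, and the inversion identity $\frac{\ud^n}{\ud x^n}I_{a+}^{n}g=g$ a.e.\ for $g\in L^1(a,b)$ and $n\in\mathbb{N}$. Granted these two facts, Lemma~\ref{L:I-D} follows from a short chain of identities obtained by unwinding the definition of $D_{a+}^{\beta}$ given in Definition~\ref{def:D}.

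First I would establish the semigroup property for $f\in L^p(a,b)$. Unwinding \eqref{def-R-I} and exchanging the order of integration (Fubini--Tonelli applies because the kernels $(x-t)^{\mu-1}$ and $(t-s)^{\nu-1}$ are locally integrable and $f\in L^p\subset L^1$ on the bounded interval $[a,b]$), one obtains
\begin{equation*}
  \bigl(I_{a+}^{\mu}I_{a+}^{\nu}f\bigr)(x)
  = \frac{1}{\Gamma(\mu)\Gamma(\nu)}\int_{a}^{x}f(s)\int_{s}^{x}(x-t)^{\mu-1}(t-s)^{\nu-1}\,\ud t\,\ud s.
\end{equation*}
The inner integral evaluates to $B(\mu,\nu)(x-s)^{\mu+\nu-1}$ by the classical beta-function identity, and dividing by $\Gamma(\mu)\Gamma(\nu)$ and using $B(\mu,\nu)=\Gamma(\mu)\Gamma(\nu)/\Gamma(\mu+\nu)$ reproduces $I_{a+}^{\mu+\nu}f$. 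The inversion identity then follows from iterated application of the Lebesgue differentiation theorem to an $n$-fold iterated integral.

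With these in hand, set $n=\Ceil{\beta}$ and use the convention $I_{a+}^{0}=\mathrm{Id}$ (which handles the integer case $\beta=n$ seamlessly). Since $\alpha-\beta>0$ by hypothesis, both semigroup applications below are legitimate, and I would compute
\begin{equation*}
  \bigl(D_{a+}^{\beta}I_{a+}^{\alpha}f\bigr)(x)
  = \frac{\ud^n}{\ud x^n}\bigl(I_{a+}^{n-\beta}I_{a+}^{\alpha}f\bigr)(x)
  = \frac{\ud^n}{\ud x^n}\bigl(I_{a+}^{n}I_{a+}^{\alpha-\beta}f\bigr)(x)
  = \bigl(I_{a+}^{\alpha-\beta}f\bigr)(x)
\end{equation*}
a.e.\ on $[a,b]$, where the first equality is the definition of $D_{a+}^{\beta}$, the middle equality combines the two semigroup applications with $(\mu,\nu)=(n-\beta,\alpha)$ and then $(\mu,\nu)=(n,\alpha-\beta)$, and the last is the inversion identity applied to $g=I_{a+}^{\alpha-\beta}f$, which lies in $L^1(a,b)$ by Young's convolution inequality against the locally integrable kernel $t^{\alpha-\beta-1}/\Gamma(\alpha-\beta)$.

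The proof itself is essentially routine; the one step requiring genuine care is the justification of the Fubini interchange uniformly for $p\in[1,\infty]$, which is addressed by noting that on a bounded interval $L^\infty(a,b)\subset L^1(a,b)$, so the $L^p$ case reduces to the $L^1$ case where absolute convergence of the iterated integral is transparent. Since the statement is a textbook fact cited directly from \cite{kilbas.srivastava.ea:06:theory}, one may equally well simply invoke it; the sketch above is what I would supply if a self-contained argument were demanded.
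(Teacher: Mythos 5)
Your argument is correct. The paper itself offers no proof of this lemma --- it is imported verbatim as Property 2.2 of \cite{kilbas.srivastava.ea:06:theory} --- so there is no ``paper approach'' to compare against; your sketch is the standard textbook derivation (semigroup law via Fubini and the beta integral, followed by the a.e.\ inversion $\frac{\ud^n}{\ud x^n}I_{a+}^{n}g=g$), and the chain $D_{a+}^{\beta}I_{a+}^{\alpha}f=\frac{\ud^n}{\ud x^n}I_{a+}^{n}I_{a+}^{\alpha-\beta}f=I_{a+}^{\alpha-\beta}f$ is exactly right, including the convention $I_{a+}^{0}=\mathrm{Id}$ for integer $\beta$ and the observation that $I_{a+}^{\alpha-\beta}f\in L^1(a,b)$ justifies the final differentiation. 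In the context of this paper, simply citing the reference (as the authors do) would suffice, but your self-contained version is sound.
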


\begin{lemma}[Property 2.5 on p. 81 of \cite{kilbas.srivastava.ea:06:theory}] \label{L:ItDt}
  For $\alpha, \beta>0$, we have
  \begin{equation*}
     \left(I_{0+}^\alpha t^{\beta-1}\right)(x) = \frac{\Gamma(\beta)}{\Gamma(\beta+\alpha)}x^{\beta+\alpha-1} \quad \text{and} \quad
     \left(D_{0+}^\alpha t^{\beta-1}\right)(x) = \frac{\Gamma(\beta)}{\Gamma(\beta-\alpha)}x^{\beta-\alpha-1}.
  \end{equation*}
\end{lemma}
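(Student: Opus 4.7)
The plan is to prove both identities by direct computation, reducing the integral formula to the Beta function identity and then bootstrapping the derivative formula from it via the defining composition $D_{0+}^\alpha = (d/dx)^n \circ I_{0+}^{n-\alpha}$ with $n=\lceil\alpha\rceil$.

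First I would treat the integral identity. Starting from Definition \ref{def:R-L-I}, I write
\[
  (I_{0+}^\alpha t^{\beta-1})(x) = \frac{1}{\Gamma(\alpha)} \int_0^x t^{\beta-1}(x-t)^{\alpha-1}\, \mathrm{d}t.
\]
Applying the change of variables $t = xs$ (valid for $x>0$), the integral pulls out an $x^{\beta+\alpha-1}$ factor and becomes $\int_0^1 s^{\beta-1}(1-s)^{\alpha-1}\,\mathrm{d}s$, which is the Beta integral $B(\beta,\alpha) = \Gamma(\beta)\Gamma(\alpha)/\Gamma(\beta+\alpha)$. The $\Gamma(\alpha)$ cancels, yielding the claimed formula. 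Convergence of the Beta integral requires $\alpha, \beta > 0$, which is exactly the hypothesis.

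Next, for the derivative identity I would split on whether $\alpha$ is a positive integer. When $\alpha = n \in \mathbb{N}$, $D_{0+}^\alpha$ is the ordinary $n$-th derivative by Definition \ref{def:D}, and the formula $(d^n/dx^n)x^{\beta-1} = \Gamma(\beta)x^{\beta-1-n}/\Gamma(\beta-n)$ follows from the standard power rule together with the functional equation $\Gamma(\beta) = (\beta-1)(\beta-2)\cdots(\beta-n)\Gamma(\beta-n)$ (using analytic continuation of $\Gamma$ when $\beta - n \leq 0$ is not an integer, and interpreting the right-hand side as zero via the poles of $\Gamma$ when $\beta - n$ is a non-positive integer, which is consistent with $(d^n/dx^n)x^{k} \equiv 0$ for $k \in \{0,1,\dots,n-1\}$). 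When $\alpha \notin \mathbb{N}$, I set $n = \lceil\alpha\rceil$ and apply the already-proved integral formula to get
\[
  (I_{0+}^{n-\alpha} t^{\beta-1})(x) = \frac{\Gamma(\beta)}{\Gamma(\beta + n - \alpha)}\, x^{\beta + n - \alpha - 1};
\]
then Definition \ref{def:D} tells me to differentiate $n$ times in $x$, and the integer case above gives the factor $\Gamma(\beta+n-\alpha)/\Gamma(\beta-\alpha)$. The middle Gamma factors telescope, leaving $\Gamma(\beta)/\Gamma(\beta-\alpha) \cdot x^{\beta-\alpha-1}$, as claimed.

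There is no real obstacle here: the whole result is a consequence of the Beta function identity plus the Gamma functional equation. The only subtlety worth flagging is the bookkeeping at poles of $\Gamma$ in the derivative formula (when $\beta - \alpha$ is a non-positive integer), which is handled by the standard convention $1/\Gamma(\text{non-positive integer}) = 0$; both sides then vanish and the identity remains correct. I would state this convention explicitly at the end of the proof for completeness.
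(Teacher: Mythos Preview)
Your proof is correct and follows the standard approach via the Beta integral and the defining composition of the Riemann--Liouville derivative. The paper does not supply its own proof of this lemma; it is simply cited from \cite{kilbas.srivastava.ea:06:theory}, so there is no comparison to make beyond noting that your argument is exactly the classical one found in that reference.
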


\begin{definition}[(2.4.1) on p. 91 of \cite{kilbas.srivastava.ea:06:theory}]\label{def:CD}
  The {\em Caputo fractional derivative} of order $\alpha$ on $[a,b]$ can be defined via the {\em
  Riemann-Liouville derivative} as follows,
  \begin{equation}\label{E:CD}
    \left({}^CD_{a+}^\alpha f\right)(x) :=
    \left(D_{a+}^{\alpha}\left[f(\cdot)-\sum_{k=0}^{\Ceil{\alpha}-1}\frac{f^{(k)}(a)}{k!}(\cdot-a)^k\right]\right)(x), \quad x\in[a,b].
  \end{equation}
\end{definition}

We are ready to recall the formulas of the solutions to Cauchy problems for differential equations
with the Caputo fractional derivatives. For $\gamma\in[0,1)$, we define the {\em weighted space
$C_{\gamma}[a,b]$ of continuous functions} as follows, $$ C_{\gamma}[a,b] := \big\{ f(x) :
(x-a)^{\gamma} f(x)\in C[a,b]\big\}. $$ Consider the following Cauchy Problem, for $\lambda \in \R,
n\in \mathbb{N}$ and $n-1<\beta<n$,
\begin{equation}\label{E:cfde}
  \begin{cases}
    (^CD_{0+}^\beta f)(x)-\lambda f(x) = y(x), &x\in[0,b], \\
    f^{(k)}(0)=b_k, \quad &b_k\in \bR \text{ for } k=0, 1, \dots, n-1.
  \end{cases}
\end{equation}
We suppose that $y(x)\in C_\gamma [0,b]$ with $0\leq \gamma < 1$ and $\gamma \leq \beta$. Then
\eqref{E:cfde} has a unique solution given by (see \cite[(4.1.62)]{kilbas.srivastava.ea:06:theory}):
\begin{equation}\label{y1}
  f(x) = \sum_{j=0}^{n-1}b_jx^jE_{\beta,j+1}(\lambda x^\beta)+ \int_{0}^{x}(x-t)^{\beta-1}E_{\beta,\beta}(\lambda (x-t)^\beta)y(t)\ud t,
\end{equation}
where $E_{a,b}(z)$ is the Mittag-Leffler function; see \eqref{E:ML}. One may get more explicit
expressions for special values of $a$ and $b$, which will be used in this paper:
\begin{equation} \label{E:ML-ex}
  E_{\frac{1}{2}}(z) = 2e^{z^2}\Phi\left(\sqrt{2}z\right), \quad
  E_{1}(z)   = e^z, \quad
  E_{2}(z)   = \cosh(\sqrt{z}), \quad
  E_{2,2}(z) = \frac{\sinh(\sqrt z)}{\sqrt z},
\end{equation}
where $\Phi(x) = \frac1{\sqrt{2\pi}}\int_{-\infty}^x e^{-\frac{x^2}2 } \ud x$ is the cumulative
distribution function of standard normal distribution. Another formula that will be useful in this
paper is
\begin{align} \label{E:ML_a+b}
  E_{\alpha,\beta}\left(|z|\right) - \frac{1}{\Gamma(\beta)} = |z|
  E_{\alpha,\alpha+\beta}\left(|z|\right),
\end{align}
which can be obtained immediately using the definition of the Mittag-Leffler function in
\eqref{E:ML}; see also (1.8.38) on p.45 of \cite{kilbas.srivastava.ea:06:theory}. The asymptotic
behavior of the Mittag-Leffler function along the positive and negative real lines plays an
important role in the paper, which has been summarized in the following lemma:

\begin{lemma}\label{L:ML-Asymp}
  If all $a>0$ and $b\in \mathbb{C}$, we have that
  \begin{itemize}[wide=0pt]
    \item if $a<2$, as $z\to+\infty$,
      \begin{equation*}
        E_{a,b}(z) = \dfrac{1}{a}z^{(1-b)/a}\exp(z^{1/a})-\dfrac{1}{\Gamma(b-a)}\dfrac{1}{z}+O(z^{-2});
      \end{equation*}
    \item if $a\geq2$, as $z\to+\infty$,
      \begin{equation*}
        E_{a,b}(z)
        = \dfrac{1}{a}\sum_{n\in\mathbb Z: |n|\le a/4}\left( z^{1/a}\exp\left[\dfrac{2n\pi i}{a}\right]\right)^{1-b}\exp\left[\exp\left(\dfrac{2n\pi i}{a}\right) z^{1/a} \right]
        - \frac{1}{\Gamma(b-a)}\dfrac{1}{z}
        + O(|z|^{-2});
      \end{equation*}
    \item if $a<2$, as $z\to-\infty$,
      \begin{equation*}
        E_{a,b}(z) = -\dfrac{1}{\Gamma\left(b-a\right)}\dfrac{1}{z} + O\left(z^{-2}\right);
      \end{equation*}
    \item if $a=2$, as $z\to-\infty$,
      \begin{equation*}
        E_{a,b}(z) = |z|^{(1-b)/2}\cos\left(\sqrt{|z|}+\dfrac{(1-b)\pi}{2}\right)-\dfrac{1}{\Gamma(b-2)}\dfrac{1}{z}+O(z^{-2}).
      \end{equation*}
  \end{itemize}
  In particular, for all $C>0$, $\lim_{t\rightarrow\infty}1/t\log E_{a,b}\left(Ct^a\right) =
  C^{1/a}$.
\end{lemma}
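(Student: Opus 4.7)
The plan is to prove Lemma \ref{L:ML-Asymp} by the standard Hankel contour method for Mittag-Leffler asymptotics, which is classical and due originally to Mittag-Leffler and Wiman. I would begin from the integral representation
$$E_{a,b}(z) = \frac{1}{2\pi i}\int_{\mathcal{H}} \frac{t^{a-b}e^t}{t^a - z}\,dt,$$
valid along a Hankel contour $\mathcal{H}$ that opens toward $-\infty$ and encircles the origin with radius larger than $|z|^{1/a}$, so that all poles of the integrand lie inside. This representation follows from the defining power series \eqref{E:ML} by substituting Hankel's formula $\frac{1}{\Gamma(s)} = \frac{1}{2\pi i}\int_\mathcal{H} e^t t^{-s}\,dt$ term by term and resumming the geometric series in $z/t^a$. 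The integrand has simple poles exactly at the $a$-th roots $t_n = z^{1/a}\exp(2\pi i n/a)$, $n\in\mathbb{Z}$, with residues $\frac{1}{a}t_n^{1-b}e^{t_n}$.

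Next, for each of the four regimes I would deform $\mathcal{H}$ inward past all poles $t_n$ satisfying $\mathrm{Re}(t_n)>0$, collecting their residues, and then bound the deformed integral along a contour $\mathcal{H}'$ lying in a half-plane $\mathrm{Re}(t)\leq -c|z|^{1/a}$. The enumeration of surviving poles is exactly what produces the four cases: for $a<2$ with $\arg z = 0$ only $n=0$ survives, giving the single exponential $\frac{1}{a}z^{(1-b)/a}e^{z^{1/a}}$; for $a\geq 2$ with $\arg z=0$ all indices with $|n|\leq a/4$ survive, giving the stated sum; for $a<2$ with $\arg z=\pi$ no pole survives because $\cos(\pi/a)<0$, so only the algebraic tail appears; for $a=2$ with $\arg z=\pi$ the two boundary poles $t_0=i\sqrt{|z|}$ and $t_{-1}=-i\sqrt{|z|}$ contribute residues that combine via Euler's formula into $|z|^{(1-b)/2}\cos(\sqrt{|z|}+(1-b)\pi/2)$. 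The remaining integral over $\mathcal{H}'$ is estimated by inserting the Taylor expansion
$$\frac{1}{t^a - z} = -\sum_{k=0}^{N-1}\frac{t^{ak}}{z^{k+1}} - \frac{t^{aN}}{z^N(t^a-z)},$$
and applying Hankel's formula $\frac{1}{2\pi i}\int_{\mathcal{H}'} t^{ak+a-b}e^t\,dt = \frac{1}{\Gamma(b-a-ak)}$ to each term, which produces the algebraic tail $-\frac{1}{\Gamma(b-a)z} + O(|z|^{-2})$ after choosing $N=2$.

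Finally, the concluding limit $\lim_{t\to\infty}t^{-1}\log E_{a,b}(Ct^a) = C^{1/a}$ is a direct consequence: the $n=0$ residue $\frac{1}{a}(Ct^a)^{(1-b)/a}\exp((Ct^a)^{1/a})$ dominates all other terms, and taking logarithms shows the polynomial prefactor and the algebraic tail contribute only $O(\log t)$. The principal obstacle is the careful bookkeeping in the case $a\geq 2$, where one must order the poles by argument and verify that the deformed contour may be chosen to avoid each one while preserving the decay of $e^t$; this is routine but tedious. Since all four expansions are classical and appear in \cite[\S 1.8, Theorems 1.3--1.4]{kilbas.srivastava.ea:06:theory}, in practice I would simply cite those results and sketch the contour deformation for the reader's convenience rather than reprove everything from scratch.
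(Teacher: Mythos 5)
Your proposal is correct and, in substance, takes the same route as the paper: the four expansions are obtained by citing the classical results in \cite[Section 1.8]{kilbas.srivastava.ea:06:theory} (the paper invokes 1.8.27--1.8.31 directly, exactly as you propose to do after your Hankel-contour sketch), and the concluding limit is deduced in both cases from the observation that the $n=0$ residue $\frac{1}{a}z^{(1-b)/a}e^{z^{1/a}}$ dominates the sum while the prefactor and algebraic tail only contribute $O(\log t)$ to the logarithm. The contour-deformation sketch is a welcome addition but not something the paper carries out.
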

\begin{proof}
  The case when $z\to\infty$ is derived from 1.8.27 (resp. 1.8.29) of
  \cite{kilbas.srivastava.ea:06:theory} when $a<2$ (resp. $a\ge 2$). The case when $a<2$ (resp. $a=2$) and $z\to-\infty$ is a consequence of 1.8.28 (resp. 1.8.31) ({\it
  ibid.}). When $a<2$, the statement for the limit is a direct consequence of the asymptotics at $+\infty$. When $a\geq2$, denoting $z=Ct^a$, we have
  \begin{equation*}
  \begin{split}
     &\lim_{t\rightarrow\infty}\frac{1}{t}\log E_{a,b}(z)
      = \lim_{t\rightarrow\infty}\frac{1}{t}\log \sum_{n\in\mathbb Z: |n|\le a/4} \left( z^{\frac{1}{a}}\exp\left[\frac{2n\pi i}{a}\right]\right)^{1-b}\exp\left[\exp\left(\frac{2n\pi i}{a}\right) z^{\frac{1}{a}} \right]                             \\
     & = \lim_{t\rightarrow\infty}\frac{1}{t}\log \sum_{n\in\mathbb Z: |n|\le a/4} z^\frac{1-b}{a} \exp\left[z^\frac{1}{a}\cos\frac{2n\pi}{a}\right] \exp \left[i \left(\frac{(1-b)2n\pi}{a}+z^\frac{1}{a}\sin\left(\frac{2n\pi}{a}\right)\right)\right] \\
     & = \lim_{t\rightarrow\infty}\frac{1}{t}\log \left( z^\frac{1-b}{a} \exp\left( z^\frac{1}{a}\right) \right)
       = C^{1/a}.
  \end{split}
  \end{equation*}
\end{proof}

We will use the reflection formula for the Gamma function (see, e.g., \cite[5.5.3 on p.
138]{olver.lozier.ea:10:nist}), namely,
\begin{align} \label{E:Reflection}
  \Gamma(z) \Gamma(1-z)=\pi /\sin\left(\pi z\right),\quad z\ne 0,\pm 1,\cdots.
\end{align}

\section{Some miscellaneous lemmas} \label{S:Lemmas}
In this section, we provide the technical lemmas. Lemma \ref{L:integrability} below will be used
used to prove Dalang's condition \eqref{E:Dalang'} in Theorem \ref{T:Exist}.

\begin{lemma}\label{L:integrability}
  For all $\varepsilon>0$, $a$, $b>0$, and $c\in \R$, it holds that
  \begin{equation}\label{E:finite-int}
    \int_{B_\varepsilon^c(0)} \frac{\cos^2(|x|^a+c)}{|x|^b} \ud x<\infty \quad \text{  if and only if $b>d$.}
  \end{equation}
\end{lemma}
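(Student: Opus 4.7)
First I would reduce to a one-dimensional problem by passing to spherical coordinates. Writing $\omega_{d-1}$ for the surface measure of the unit sphere in $\R^d$, we obtain
\begin{equation*}
   \int_{B_\varepsilon^c(0)} \frac{\cos^2(|x|^a+c)}{|x|^b} \ud x
   = \omega_{d-1} \int_\varepsilon^\infty \frac{\cos^2(r^a+c)}{r^{b-d+1}} \ud r.
\end{equation*}
Since the integrand is continuous and bounded on any compact subinterval of $(0,\infty)$, convergence is entirely a question of the behavior as $r\to\infty$.

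Next I would handle the two implications separately. For the ``if'' direction ($b>d$), I would simply use $\cos^2(r^a+c)\le 1$ to bound
\begin{equation*}
  \int_\varepsilon^\infty \frac{\cos^2(r^a+c)}{r^{b-d+1}} \ud r \le \int_\varepsilon^\infty \frac{1}{r^{b-d+1}} \ud r,
\end{equation*}
which is finite precisely when $b-d+1>1$, i.e., $b>d$.

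For the ``only if'' direction ($b\le d \Rightarrow$ integral diverges), I would use the identity $\cos^2\theta = \tfrac{1}{2}(1+\cos 2\theta)$ to split
\begin{equation*}
  \int_1^\infty \frac{\cos^2(r^a+c)}{r^{b-d+1}} \ud r
  = \frac{1}{2}\int_1^\infty \frac{\ud r}{r^{b-d+1}}
  + \frac{1}{2}\int_1^\infty \frac{\cos(2r^a+2c)}{r^{b-d+1}} \ud r.
\end{equation*}
When $b\le d$ the first (non-oscillatory) summand is $+\infty$. For the oscillatory term, the substitution $u=r^a$ converts it into $\tfrac{1}{2a}\int_1^\infty u^{(d-b)/a - 1} \cos(2u+2c)\ud u$; one then applies Dirichlet's test (or integration by parts) to show this integral converges, since $u^{(d-b)/a-1}$ is eventually monotone and tends to $0$ when $b<d$ (or equals $u^{-1}$ when $b=d$), so the oscillatory summand is finite. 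Since $+\infty$ plus a finite number is $+\infty$, divergence follows.

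The only subtlety is the case $b=d$ in the oscillatory piece, where the integrand becomes $\cos(2u+2c)/u$; this is the classical convergent oscillatory integral, so no difficulty arises. The argument is clean because $\cos^2\ge 0$ makes the lower bound step legitimate, and the nonnegative average value $\tfrac12$ of $\cos^2$ dictates that the integrability is governed entirely by the polynomial weight $r^{-(b-d+1)}$.
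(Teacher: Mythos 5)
Your radial reduction and your proof of the sufficiency of $b>d$ are both correct. The gap is in the ``only if'' direction, in the claim that the oscillatory summand
\begin{equation*}
  \frac{1}{2a}\int_1^\infty u^{(d-b)/a-1}\cos(2u+2c)\,\ud u
\end{equation*}
converges. Dirichlet's test needs the amplitude $u^{(d-b)/a-1}$ to decrease to $0$, which holds precisely when $(d-b)/a<1$, i.e.\ $d-b<a$. In the regime $b\le d$ with $d-b\ge a$ (e.g.\ $d=3$, $b=1$, $a=1$, where the amplitude is $u$ itself) the test does not apply, and the improper integral $\int_1^\infty u^{\rho}\cos(2u+2c)\,\ud u$ genuinely fails to converge for every $\rho>0$: integration by parts leaves a boundary term $\tfrac12 u^{\rho}\sin(2u+2c)$ whose oscillation grows. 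So the step ``$+\infty$ plus a finite number is $+\infty$'' is not available as stated.

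The conclusion is salvageable along your lines, but you need a growth comparison rather than convergence: integration by parts gives $\int_1^{T^a}u^{\rho}\cos(2u+2c)\,\ud u=O\bigl(T^{\max(d-b-a,\,0)}\bigr)$, which is $o\bigl(T^{d-b}\bigr)$ when $b<d$ (and the oscillatory piece is $O(1)$ when $b=d$, where $\rho=-1$), so the non-oscillatory term $\tfrac12\int_1^T r^{d-b-1}\ud r$ still dominates and forces divergence. The paper avoids this bookkeeping entirely: after substituting $s=r^a$ it bounds $\cos^2(s+c)\ge\tfrac12$ on the intervals $[n\pi-c,(n+\tfrac14)\pi-c]$ and thereby bounds the integral from below by a constant multiple of $\sum_n n^{-(b-d)/a-1}$, which diverges exactly when $b\le d$. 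That positivity argument sidesteps the oscillatory integral altogether; I would either adopt it or insert the quantitative estimate above.
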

\begin{proof}
  We only consider the case $d\ge 2$ while the case $d=1$ is similar but easier. Denote the integral
  in \eqref{E:finite-int} by $I$. Since the integrand is radial,
  \begin{align*}
    I & = C \int_\varepsilon^\infty  \frac{\cos^2(r^a+c)}{r^b} r^{d-1}\ud r,
    \quad \text{with $C=\frac{2\pi^{d/2}}{\Gamma\left(d/2\right)}$.}
  \end{align*}
  Clearly, $b>d$ is a sufficient condition for \eqref{E:finite-int}. To get the necessity, observe
  that
  \begin{align*}
    I & =   \frac{C}{a}\int_{\varepsilon^a}^\infty  \cos^2(s+c) s^{-\frac1a(b-d) -1} \ud s                               \\
      & \ge \frac{C}{a} \sum_{n=N}^\infty \int_{n\pi-c}^{ \left(n+\frac14\right)\pi-c} \cos^2(s+c) s^{-(b-d)/a -1} \ud s \\
      & \ge \frac{C \pi^{-(b-d)/a}}{8a} \sum_{n=N+1}^\infty n^{-(b-d)/a-1},
  \end{align*}
  where $N=N(\varepsilon^a,c)$ is a finite positive integer. The series on the right-hand side is
  convergent if and only if $b>d$ and thus $b>d$ is also necessary for \eqref{E:finite-int}.
\end{proof}

The following lemma is a convolution-type Gronwall lemma, which was proved in Lemma A.2 of
\cite{chen.hu.ea:21:regularity} for $\theta\in(-1,0)$. But indeed, the same proof can be extended
directly to all $\theta>-1$. One can use this lemma to obtain the moment formulas in Theorem
\ref{T:fde} as pointed out in Remark \ref{R:method2}.
\begin{lemma} \label{L:f(t)}
  Suppose that $\theta>-1$, $\kappa>0$ and that $g(\cdot):\R_+\rightarrow\R$ is a locally integrable
  function. If $f$ satisfies
  \begin{equation}\label{f1}
    f(t)=g(t)+\kappa\int_{0}^{t}(t-s)^\theta f(s)\ud s,\quad \text{for }t\geq 0,
  \end{equation}
  then
  \begin{equation}\label{f2}
    f(t)=g(t)+\int_{0}^{t}g(s)K(t-s)\ud s,
  \end{equation}
  with $K(t)=\kappa \Gamma(\theta+1)t^\theta E_{\theta+1,\theta+1}(\kappa\Gamma(\theta+1)t^{\theta+1})$.
  Moreover, if we further assume $g(\cdot)\geq0$ and the equality in \eqref{f1} is replaced by
  $\leq$ (resp. $\geq$), then the equality in \eqref{f2} is replaced by $\leq$ (resp. $\geq$)
  accordingly.
\end{lemma}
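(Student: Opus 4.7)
The plan is to recognize equation \eqref{f1} as a linear Volterra integral equation of the second kind with convolution kernel, and to solve it via the Neumann series / resolvent kernel. Writing $L(t) := \kappa t^\theta$ and using the Laplace convolution $(a*b)(t)=\int_0^t a(t-s)b(s)\,\ud s$, equation \eqref{f1} becomes $f = g + L*f$. Picard iteration yields formally $f = g + R*g$ with resolvent
\[
R(t) := \sum_{n=1}^{\infty} L^{*n}(t),
\]
so the task reduces to identifying $R$ with the claimed $K$.

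The key computation is the iterated self-convolution of the power $t^\theta$. The Beta-function identity $\int_0^t s^{\alpha}(t-s)^{\beta}\,\ud s = B(\alpha+1,\beta+1)\,t^{\alpha+\beta+1}$ together with a short induction gives
\[
(t^\theta)^{*n}(t) = \frac{\Gamma(\theta+1)^n}{\Gamma(n(\theta+1))}\,t^{n(\theta+1)-1},\qquad n\ge 1,
\]
so that, setting $A := \kappa\,\Gamma(\theta+1)$,
\[
R(t) = \sum_{n=1}^{\infty}\frac{A^n}{\Gamma(n(\theta+1))}\,t^{n(\theta+1)-1}
     = A\,t^\theta\sum_{m=0}^{\infty}\frac{(A\,t^{\theta+1})^m}{\Gamma((m+1)(\theta+1))}
     = A\,t^\theta E_{\theta+1,\theta+1}\!\left(A\,t^{\theta+1}\right) = K(t),
\]
after shifting $m=n-1$ and applying \eqref{E:ML}. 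Convergence of the Neumann series on each $[0,T]$ follows from the bound $\|L^{*n}\|_{L^1([0,T])} \le (\kappa T^{\theta+1}\Gamma(\theta+1))^n/\Gamma(n(\theta+1)+1)$, which is summable thanks to the factorial growth of $\Gamma$; this also justifies interchanging sum and integral when forming $R*g$ for any locally integrable $g$.

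For the comparison statement with $g\ge 0$: both $L$ and hence each iterated convolution $L^{*n}$ are nonnegative kernels. If $f$ satisfies the inequality version $f \le g + L*f$, set $\varphi := g + L*f - f \ge 0$ and solve the resulting equality $f = (g - \varphi) + L*f$ by the formula already established, obtaining $f = (g-\varphi) + R*(g-\varphi) \le g + R*g$; the reverse inequality is analogous. This uses only positivity of $L^{*n}$ and of $g$, which are immediate from $\kappa>0$, $\theta>-1$, and $g\ge 0$.

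The main technical point — and the only place where one must be careful relative to the $\theta\in(-1,0)$ argument in \cite{chen.hu.ea:21:regularity} — is the local integrability of the kernel $t^\theta$ and its iterates near $0$. This is automatic for $\theta\ge 0$ and, for $\theta\in(-1,0)$, each $L^{*n}$ with $n\ge 2$ is in fact bounded near $0$ because $n(\theta+1)-1 > -1$, so the interchange of summation and integration goes through uniformly in $\theta>-1$. With this verified, the proof of \cite[Lemma A.2]{chen.hu.ea:21:regularity} carries over verbatim.
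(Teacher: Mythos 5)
Your proof is correct. Note that the paper does not actually prove Lemma \ref{L:f(t)}: it cites Lemma A.2 of \cite{chen.hu.ea:21:regularity} (stated there for $\theta\in(-1,0)$) and asserts that the argument extends to all $\theta>-1$. Your Neumann-series/resolvent argument is precisely that standard proof: the Beta-function induction giving $(t^\theta)^{*n}(t)=\Gamma(\theta+1)^n t^{n(\theta+1)-1}/\Gamma(n(\theta+1))$, the resummation into $E_{\theta+1,\theta+1}$, the $L^1([0,T])$ bound $\bigl(\kappa T^{\theta+1}\Gamma(\theta+1)\bigr)^n/\Gamma(n(\theta+1)+1)$ controlling convergence, and the positivity of the iterated kernels for the comparison statement are all in order, and you correctly identify that the only point requiring care beyond the cited case is local integrability of $t^\theta$ near the origin, which holds for every $\theta>-1$.
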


The following lemma will be used to obtain the explicit second moment formulas for stochastic wave
equation (i.e., $\beta=2$) in Example \ref{Ex:SFWE}.
\begin{lemma} \label{L:sin}
  For $\alpha>1$ and $b>0$, it holds that
  \begin{align*}
       \int_0^\infty \frac{\sin^2\left(b\: \xi^{\alpha/2}\right)}{\xi^\alpha}\ud \xi
       =
       \begin{cases}
         2^{2(1-1/\alpha)} \alpha^{-1} \cos\left(\pi/\alpha\right) \Gamma\left(2(1/\alpha-1)\right) b^{2-2/\alpha} & \text{if $\alpha\in (1,2)\cup (2,\infty)$,} \\
         2^{-1}b\pi,                                                                                               & \text{if $\alpha=2$.}
       \end{cases}
  \end{align*}
\end{lemma}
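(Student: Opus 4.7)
The plan is to reduce the integral, via a single substitution, to the classical ``cosine/sine transform of a power'' identity, and then handle the pole at $\alpha=2$ separately.

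First, the case $\alpha=2$ is a well-known formula: for any $b>0$, a direct computation (e.g.\ integration by parts once followed by the Dirichlet integral $\int_0^\infty \sin(u)/u\,\ud u=\pi/2$) yields
\begin{equation*}
  \int_0^\infty \frac{\sin^2(b\xi)}{\xi^2}\,\ud \xi = \frac{b\pi}{2}.
\end{equation*}
This is the second branch of the lemma. Note that the first branch is singular at $\alpha=2$ because $\Gamma(2(1/\alpha-1))$ has a simple pole there while $\cos(\pi/\alpha)$ has a simple zero; one could also recover the second branch by taking the limit $\alpha\to 2$ and computing the residue, but treating it directly is cleaner.

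For $\alpha\in(1,2)\cup(2,\infty)$, I would substitute $u=\xi^{\alpha/2}$, so that $\xi=u^{2/\alpha}$, $\xi^\alpha = u^2$ and $\ud \xi = (2/\alpha)u^{2/\alpha-1}\ud u$. This converts the integral into
\begin{equation*}
  \int_0^\infty \frac{\sin^2(b\,\xi^{\alpha/2})}{\xi^\alpha}\,\ud\xi
  = \frac{2}{\alpha}\int_0^\infty \sin^2(bu)\, u^{2/\alpha-3}\,\ud u
  = \frac{1}{\alpha}\int_0^\infty \bigl(1-\cos(2bu)\bigr)\, u^{s-1}\,\ud u,
\end{equation*}
with $s:=2/\alpha-2$. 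The condition $\alpha>1$, $\alpha\ne 2$ translates precisely into $s\in(-2,0)\setminus\{-1\}$, which is the natural range of convergence: near $0$ one has $1-\cos(2bu)=O(u^2)$ so the integrand behaves like $u^{s+1}$ with $s+1>-1$, and near infinity $u^{s-1}$ decays integrably against the bounded, oscillatory factor.

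The key analytic input is the classical identity
\begin{equation*}
  \int_0^\infty \bigl(1-\cos(au)\bigr)\, u^{s-1}\,\ud u = -\Gamma(s)\cos(\pi s/2)\,a^{-s},\qquad a>0,\ s\in(-2,0),
\end{equation*}
which one obtains from $\int_0^\infty u^{s-1}\cos(au)\,\ud u=\Gamma(s)\cos(\pi s/2)a^{-s}$ on $0<s<1$ via one integration by parts (equivalently, by analytic continuation in $s$). Applying this with $a=2b$ and $s=2/\alpha-2$, and using $\cos(\pi s/2)=\cos(\pi/\alpha-\pi)=-\cos(\pi/\alpha)$, we find
\begin{equation*}
  \int_0^\infty \bigl(1-\cos(2bu)\bigr)\, u^{2/\alpha-3}\,\ud u
  = \Gamma\bigl(2(1/\alpha-1)\bigr)\cos(\pi/\alpha)\, 2^{2-2/\alpha}\, b^{2-2/\alpha}.
\end{equation*}
Multiplying by $1/\alpha$ gives exactly the claimed expression $2^{2(1-1/\alpha)}\alpha^{-1}\cos(\pi/\alpha)\Gamma(2(1/\alpha-1))\,b^{2-2/\alpha}$.

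The only mildly subtle step is justifying the cosine transform identity in the extended range $s\in(-2,0)$; I expect this is the ``main obstacle'' only in the bookkeeping sense. Two clean justifications are available: (i) write $1-\cos(2bu) = \int_0^{2b} u\sin(ru)\,\ud r$, apply Fubini, and invoke $\int_0^\infty u^s\sin(ru)\,\ud u=\Gamma(s+1)\sin(\pi(s+1)/2)r^{-s-1}$ valid for $s+1\in(-1,1)$; or (ii) cite the standard Mellin-transform extension. Either route completes the proof.
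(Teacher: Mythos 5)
Your proposal is correct, and it takes a genuinely different route from the paper. After the common first step (the substitution $u=\xi^{\alpha/2}$, which both you and the authors perform), the paper proceeds via Plancherel's theorem: it pairs $f=\one_{[-b,b]}$, whose Fourier transform is $2\sin(b\xi)/\xi$, with an explicitly constructed even function $g$ whose Fourier transform is $\pi|\xi|^{-2(1-1/\alpha)}\sin(b|\xi|)$ (identified from a table of Fourier cosine transforms in Erd\'elyi et al.), computes $\int fg$ directly, and finishes with the reflection formula $\Gamma(z)\Gamma(1-z)=\pi/\sin(\pi z)$. You instead reduce to the classical Mellin-type identity $\int_0^\infty(1-\cos(au))u^{s-1}\,\ud u=-\Gamma(s)\cos(\pi s/2)a^{-s}$ for $s\in(-2,0)$, which I have checked lands exactly on the stated constant (with $s=2(1/\alpha-1)$, $a=2b$, and $\cos(\pi s/2)=-\cos(\pi/\alpha)$). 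Your route is more self-contained and elementary: the only input is the Mellin transform of sine/cosine, and the extension to $s\in(-2,0)$ is cleanly justified by your single integration by parts, which converts the integral to $-s^{-1}\int_0^\infty u^s\sin u\,\ud u$ with $s+1\in(-1,1)$ (your Fubini variant (i) needs a limiting argument for $s\in[-1,0)$ since absolute convergence fails there, so the integration-by-parts justification is the one to keep). Your treatment of $\alpha=2$ matches the paper's (both do it separately), and your observation that the first branch recovers the second in the limit $\alpha\to2$ via the pole of $\Gamma$ against the zero of $\cos(\pi/\alpha)$ is a useful consistency check that the paper does not record.
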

\begin{proof}
  Denote the integral by $I$. By change of variable $z=\xi^{\alpha/2}$, we see that
  \begin{align*}
    I = \frac{2}{\alpha} \int_0^\infty \frac{\sin^2\left(b z\right)}{z^{3-2/\alpha}} \ud z.
  \end{align*}
  Let $f(x)= \one_{[-b,b]}(x)$ and $g(x)$ be an even function defined as, for $x>0$,
  \begin{align*}
    g(x) = \frac{\pi}{4\Gamma\left(2(1-1/\alpha)\right) \sin\left(\pi/\alpha\right)}
    \left[\left(x+b\right)^{1-2/\alpha} + |b-x|^{1-2/\alpha}\text{sgn}(b-x)\right].
  \end{align*}
  Now we compute the Fourier transforms for these two functions. It is clear that
  \begin{align*}
    \widehat{f}(\xi) = \frac{2 \sin(b\: \xi)}{\xi}.
  \end{align*}
  Let $h(\xi) = |\xi|^{-2(1-1/\alpha)} \sin\left(b\: |\xi|\right)$. By (2) on p. 19 of \cite{erdelyi.magnus.ea:54:tables}, we see that
  \begin{align*}
    \mathcal{F}^{-1}h(x) = \frac{1}{2\pi} \int_\R e^{ix\xi} h(\xi) \ud \xi
    = \frac{1}{\pi}\int_0^\infty h(\xi) \cos(x\xi)\ud\xi
    = \frac{1}{\pi} g(x),
  \end{align*}
  under the following condition:
  \begin{align*}
    \left|\frac{2}{\alpha} -1\right| < 1 \quad  \Longleftrightarrow \quad \alpha > 1.
  \end{align*}
  Hence, when $\alpha>1$, we have $\widehat{g}(\xi) = \pi h(\xi)$. Then by the Plancherel theorem,
  \begin{align*}
    \int_\R f(x) g(x)\ud x = \frac{1}{2\pi} \int_\R \widehat{f}(\xi) \overline{\widehat{g}(\xi)} \ud \xi
    = \frac{1}{\pi}\int_0^\infty \frac{2 \sin(b\: \xi)}{\xi} \pi \xi^{-2(1-1/\alpha)} \sin\left(b\: \xi\right) \ud \xi
    = \alpha I.
  \end{align*}
  On the other hand,
  \begin{align*}
    \int_\R f(x) g(x)\ud x
    = & 2 \int_0^b g(x)\ud x                                                                                                                                                                 \\
    = & \frac{\pi}{2\Gamma\left(2(1-1/\alpha)\right) \sin\left(\pi/\alpha\right)} \int_0^b \left[\left(x+b\right)^{1-2/\alpha} + (b-x)^{1-2/\alpha}\right]\ud x                              \\
    = & \frac{\pi 2^{1- 2/\alpha} b^{2-2/\alpha}}{\Gamma\left(2-2/\alpha\right)\left(2-2/\alpha\right)\sin\left(\pi/\alpha\right) }                                                          \\
    = & \frac{\pi 2^{1- 2/\alpha} b^{2-2/\alpha}}{\Gamma\left(3-2/\alpha\right)\sin\left(\pi/\alpha\right)}.
  \end{align*}
  Hence, if $\alpha=2$, the above expression becomes  $b\pi$. This proves the lemma for the case  $\alpha=2$. Now if  $\alpha\ne 2$, we have
  \begin{align*}
    \int_\R f(x) g(x)\ud x
    = & \frac{\pi 2^{1- 2/\alpha} b^{2-2/\alpha}}{\Gamma\left(3-2/\alpha\right)\sin\left(\pi/\alpha\right)} \times \frac{\Gamma\left(2(1/\alpha-1)\right)}{\Gamma\left(2(1/\alpha-1)\right)} \\
    = & \frac{2^{1- 2/\alpha} b^{2-2/\alpha}\Gamma\left(2(1/\alpha-1)\right)}{\sin\left(\pi/\alpha\right)} \sin\left(2\pi/\alpha\right)                                                      \\
    = & 2^{2- 2/\alpha} b^{2-2/\alpha}\Gamma\left(2(1/\alpha-1)\right)\cos\left(\pi/\alpha\right).
  \end{align*}
  where we have applied the reflection formula for Gamma function \eqref{E:Reflection}. This proves
  the lemma.
\end{proof}

The following lemma will be used to prove the lower bound of moment estimates in Theorem
\ref{T:lower-bd}. For two sequence of positive numbers $a_n, b_n, n\in\mathbb{N}$, we denote
$a_n\sim b_n$ if $\lim_{n\to \infty}a_n/b_n=1$.

\begin{lemma}\label{L:exp-1}
  (1) As $n\to\infty$, we have
  \begin{gather}\label{E:gamma-n}
    \int_n^\infty t^n e^{-t} \ud t \sim \frac{n^n}{e^n} \sqrt{\frac{\pi n}{2}} \quad \text{and}\\
    \sum_{m=0}^{n-1} \frac{n^m}{m!} \sim \sum_{m=n}^{2n-1} \frac{n^m}{m!}\sim \frac12 e^n.
    \label{E:exp-n}
  \end{gather}
  (2) Given $\alpha>0$, for $n$ sufficiently large, there exist two positive constants $c_1$ and $
  c_2$ depending only on $\alpha$ such that
  \begin{align*}
   \sum_{m=n}^\infty \left( \frac{n^m}{m!}\right)^\alpha \ge c_1 \exp(c_2n).
  \end{align*}
\end{lemma}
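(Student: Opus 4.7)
The central trick for both parts is to recognize sums and integrals of the form $n^m/m!$ as probabilities under a Poisson$(n)$ distribution, and then to invoke the central limit theorem. Let $Y_n \sim \text{Poisson}(n)$, so that $(Y_n - n)/\sqrt{n}$ converges in distribution to $N(0,1)$ as $n\to\infty$. Then by definition,
\begin{equation*}
  e^{-n}\sum_{m=0}^{k} \frac{n^m}{m!} = \P(Y_n \le k),
\end{equation*}
and by a standard identity relating the incomplete gamma function to Poisson tails,
\begin{equation*}
  \frac{1}{n!}\int_n^\infty t^n e^{-t}\ud t = \frac{\Gamma(n+1,n)}{n!} = \P(Y_n \le n).
\end{equation*}

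From these two representations, part (1) is immediate. Taking $k = n-1$ and $k = 2n-1$, the CLT gives $\P(Y_n \le n-1) \to 1/2$ (since $(Y_n - n)/\sqrt{n}$ crosses $0$) and $\P(Y_n \le 2n-1) \to 1$ (since the deviation $(2n-1-n)/\sqrt{n} = \sqrt{n}(1-o(1)) \to \infty$). Subtracting yields $\sum_{m=n}^{2n-1} n^m/m! \sim e^n/2$. For the integral, $\P(Y_n \le n) \to 1/2$ gives $\int_n^\infty t^n e^{-t}\ud t \sim n!/2$, and combining with Stirling $n! \sim \sqrt{2\pi n}\,(n/e)^n$ yields the claimed $(n/e)^n \sqrt{\pi n/2}$.

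For part (2), the single term $m = n$ already suffices:
\begin{equation*}
  \sum_{m=n}^{\infty}\left(\frac{n^m}{m!}\right)^{\alpha} \ge \left(\frac{n^n}{n!}\right)^{\alpha}.
\end{equation*}
By Stirling, $n^n/n! \sim e^n/\sqrt{2\pi n}$, hence $(n^n/n!)^\alpha \sim e^{\alpha n}/(2\pi n)^{\alpha/2}$. For any choice $c_2 \in (0,\alpha)$, the factor $n^{-\alpha/2}$ is easily absorbed, and so for $n$ large enough the right-hand side exceeds $c_1 e^{c_2 n}$ with $c_1$ depending only on $\alpha$ and $c_2$.

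There is no real obstacle here: the proof is essentially an exercise in Stirling's formula and the CLT for Poisson sums. The only point requiring any care is the justification of the identity linking the upper incomplete gamma function to the Poisson CDF, which follows from repeated integration by parts on $\int_n^\infty t^n e^{-t}\,\ud t$.
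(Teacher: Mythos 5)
Your proof is correct, but it follows a genuinely different route from the paper's. For part (1) you convert everything into Poisson tail probabilities: the identity $e^{-n}\sum_{m=0}^{k} n^m/m! = \P(Y_n\le k)$ for $Y_n\sim\mathrm{Poisson}(n)$, together with the integration-by-parts identity $\int_n^\infty t^n e^{-t}\,\ud t = n!\,e^{-n}\sum_{m=0}^{n} n^m/m! = n!\,\P(Y_n\le n)$, reduces both asymptotics to $\P(Y_n\le n)\to \tfrac12$, $\P(Y_n\le n-1)\to\tfrac12$ and $\P(Y_n\le 2n-1)\to 1$, which follow from the CLT for sums of i.i.d.\ Poisson$(1)$ variables (the moving thresholds $-1/\sqrt n\to 0$ and $(n-1)/\sqrt n\to\infty$ are harmless since the limiting CDF is continuous); Stirling then converts $n!/2$ into $(n/e)^n\sqrt{\pi n/2}$. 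The paper instead proves the integral asymptotic directly by the substitution $x=t/\sqrt n-\sqrt n$ and dominated convergence, and handles the sums via the Taylor-remainder representation $\sum_{m\ge k}x^m/m!=\int_0^x \frac{(x-t)^k}{k!}e^t\,\ud t$, feeding the integral estimate back in. Your version is shorter and more conceptual, at the cost of importing the Poisson CLT and the incomplete-gamma identity; the paper's is self-contained calculus. For part (2) your observation that the single term $m=n$ already gives $(n^n/n!)^\alpha \sim e^{\alpha n}(2\pi n)^{-\alpha/2} \ge c_1 e^{c_2 n}$ for any fixed $c_2\in(0,\alpha)$ is simpler than the paper's argument, which keeps the block $n\le m\le 2n-1$ and applies a power-mean inequality together with \eqref{E:exp-n} to get $c_2$ up to $\alpha$; since the lemma only asserts the existence of some positive $c_2$, your shortcut is entirely adequate for the application in Theorem \ref{T:lower-bd}.
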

\begin{proof}
  (1) Denote the integral in \eqref{E:gamma-n} by $I$. By change of variable
  $x=t/\sqrt{n}-\sqrt{n}$, we see that
  \begin{equation*}
    I= \frac{n^n\sqrt{n}}{e^n}\int_0^\infty e^{-\sqrt{n} x} \left(1+\frac{x}{\sqrt{n}}\right)^n \ud x.
  \end{equation*}
  Notice that $\left(1+ \frac{x}{\sqrt{n}}\right)^n e^{-\sqrt{n} x} \le (1+x)e^{-x}$ for all $t>0$
  and $n\ge 1$ with the upper bound being integrable. Hence, by the dominated convergence theorem
  and L'Hospital's rule, we conclude that
  \begin{align*}
    \lim_{n \to \infty}I \frac{ e^n}{n^n \sqrt{n}}
    = \int_0^\infty \lim_{n \to \infty}e^{-\sqrt{n} x} \left(1+\frac{x}{\sqrt{n}}\right)^n \ud x
    = \int_0^\infty e^{-x^2 /2}\ud x = \sqrt{\frac{\pi}{2}},
  \end{align*}
  which proves \eqref{E:gamma-n}.

  To prove \eqref{E:exp-n}, it suffices to show $R_n(n) \sim \sfrac12\: e^n$ and $\lim_{n\to \infty}
  e^{-n}R_{2n}(n)=0$, where $R_k(x)$ is the remainder function for the Taylor expansion of
  $e^x$:
  \begin{align*}
    R_k(x) = \sum_{m=k}^{\infty} \frac{x^m}{m!}=\int_0^x \frac{(x-t)^k}{k!} e^ t \ud t.
  \end{align*}
  For $R_n(n)$, by change of variable we get
  \begin{align*}
    R_n(n) = \int_0^n \frac{(n-t)^n}{n!} e^t \ud t = \frac{e^n}{n!} \int_0^n x^n e^{-x}\ud x.
  \end{align*}
  By Stirling's formula $n! \sim \sqrt{2\pi n} e^{-n} n^n$ and \eqref{E:gamma-n}, we can show
  \begin{equation}\label{E:int-0-n}
    \int_0^n x^ne^{-x}\ud x = \left(\Gamma(n+1) -\int_n^\infty x^n e^{-x}\ud x\right)\sim \frac12 n!.
  \end{equation}
  For $R_{2n}(n)$, we have
  \begin{align*}
    R_{2n}(n) = \int_0^n \frac{(n-t)^{2n}}{(2n)!} e^t \ud t\le \frac{n^n}{(2n)!} \int_0^n (n-t)^n e^{t} \ud t=\frac{e^n n^n}{(2n)!}\int_0^n x^ne^{-x}\ud x.
  \end{align*}
  Thus, by \eqref{E:int-0-n}, we have
  \begin{align*}
        \lim_{n\to\infty} \frac{R_{2n}(n)}{e^n}
    \le \lim_{n\to\infty} \frac12\frac{n^n (n!)}{(2n)!}
    =   \lim_{n\to\infty} \frac12 \frac{n^{2n} e^{-n} \sqrt{2\pi n} }{ (2n)^{2n} e^{-2n}\sqrt{4\pi n}}
    =   0,
  \end{align*}
  which proves \eqref{E:exp-n}. \bigskip

  (2) The desired result follows directly from the fact
  \begin{align*}
    \sum_{m=n}^\infty \left( \frac{n^m}{m!} \right)^\alpha
    \ge \sum_{m=n}^{2n-1} \left( \frac{n^m}{m!} \right)^\alpha
    \ge
    \begin{cases}
      \displaystyle \left( \sum_{m=n}^{2n-1} \frac{n^m}{m!} \right)^\alpha,             & \text{if $\alpha\in(0,1]$}, \\
      \displaystyle n^{1-\alpha} \left( \sum_{m=n}^{2n-1} \frac{n^m}{m!}\right)^\alpha, & \text{if $\alpha>1$}.
    \end{cases}
  \end{align*}
  Then an application of \eqref{E:exp-n} proves (2).
\end{proof}

\section{Fundamental solutions} \label{S:Funamental}

The fundamental solutions to \eqref{E:fde} in case when $d=1$, $\beta=1$, $\gamma=0$, and $\alpha\in
2 \mathbb{N}$ (i.e., $\alpha$ is an even integer) have been studied in \cite{krylov:60:some} and
\cite{hochberg:78:signed}. In \cite{debbi:06:explicit}, Debbi studied the fundamental solutions to
\eqref{E:fde} when $d=1$, $\beta=1$, $\gamma=0$, and $\alpha\in (1,\infty)\setminus \mathbb{N}$ and
then, with Dozzi \cite{debbi.dozzi:05:on}, they studied the corresponding SPDEs with space-time
white noise. This part can be viewed as a generalization of their results to a class of more general
of SPDEs. The Fox H-functions \cite{kilbas.saigo:04:h-transforms} allow us to study the fundamental
solutions to \eqref{E:fde} with the much more general parameters in a unified way.

The following theorem generalizes Theorem 4.1 of \cite{chen.hu.ea:19:nonlinear} from $\alpha\in
(0,2]$ and $\beta\in (0,2)$ to the case $\alpha>0$ and $\beta\in (0,2]$. The statement of the
theorem remains almost the same except the conditions on $\alpha$ and $\beta$. The proof also
follows the same lines of arguments as those in \cite{chen.hu.ea:19:nonlinear}; one may also check
the proof of Theorem 3.1 in \cite{chen.hu.ea:17:space-time} for the case when $\gamma=0$. The case
when $\beta=2$ is new. For the readers' convenience, we state the theorem below and present its
proof to explicitly show why the ranges of $\alpha$ and $\beta$ can be generalized.

\begin{theorem}\label{T:PDE}
  For $\alpha\in (0,\infty)$, $\beta\in (0,2]$, and $\gamma\ge 0$, the solution to
  \begin{align} \label{E:PDE}
    \begin{cases}
      \left(\partial_t^\beta + \dfrac{\nu}{2} (-\Delta)^{\alpha/2} \right) u(t,x)= I_t^\gamma\left[f(t,x)\right], & \qquad t>0,\: x\in\R^d, \\[1em]
      \left.\dfrac{\partial^k}{\partial t^k} u(t,x)\right|_{t=0}=u_k(x),                                          & \qquad 0\le k\le \Ceil{\beta}-1, \:\: x\in\R^d,
    \end{cases}
  \end{align}
  is
  \begin{align}\label{E:Duhamel}
    u(t,x) = J_0(t,x) + \int_0^t \ud s \int_{\R^d} \ud y\: f(s,y) \: \DRL{0+}{\Ceil{\beta}-\beta-\gamma} Z(t-s,x-y),
  \end{align}
  where $\DRL{0+}{\Ceil{\beta}-\beta-\gamma}$ denotes the {\em Riemann-Liouville derivative}
  $D_{0+}^{\Ceil{\beta}-\beta-\gamma}$ acting on the time variable,
  \begin{align} \label{E:J0}
    J_0(t,x):= \sum_{k=0}^{\Ceil{\beta}-1}\int_{\R^d} u_{k}(y) \partial_t^{\Ceil{\beta}-1-k} Z(t,x-y) \ud y
  \end{align}
  is the solution to the homogeneous equation and $Z(t,x) := Z_{\alpha,\beta,d}(t,x)$ is the
  corresponding fundamental solution. If we denote
  \begin{gather*}
    Y(t,x) := Y_{\alpha,\beta,\gamma,d}(t,x) = \DRL{0+}{\Ceil{\beta}-\beta-\gamma}
    Z_{\alpha,\beta,d}(t,x), \\
    Z^*(t,x) := Z^{*}_{\alpha,\beta,d}(t,x) = \frac{\partial}{\partial t} Z_{\alpha,\beta,d}(t,x),
    \quad \text{if $\beta\in(1,2]$,}
  \end{gather*}
  then we have the following Fourier transforms:
  \begin{align} \label{E:FZ}
   \cF Z_{\alpha,\beta,d}(t,\cdot)(\xi)        & = t^{\Ceil{\beta}-1} E_{\beta,\Ceil{\beta}}(-2^{-1}\nu t^\beta |\xi|^\alpha),              \\
   \cF Y_{\alpha,\beta,\gamma,d}(t,\cdot)(\xi) & = t^{\beta+\gamma-1} E_{\beta,\beta+\gamma}(-2^{-1}\nu t^\beta |\xi|^\alpha), \label{E:FY} \\
   \cF Z^*_{\alpha,\beta,d}(t,\cdot)(\xi)      & = t^{k} E_{\beta,k+1}(-2^{-1}\nu t^\beta |\xi|^\alpha), \quad \text{if $\beta\in (1,2]$}.
   \label{E:FZ*}
  \end{align}
  Moreover, when $\beta\in (0,2)$, we have the following explicit expressions:
  \begin{align} \label{E:Zab}
     Z(t,x) = \pi^{-d/2} t^{\Ceil{\beta}-1} |x|^{-d}
     \FoxH{2,1}{2,3}{\frac{ |x|^\alpha}{2^{\alpha-1}\nu t^\beta}}{(1,1),\:(\Ceil{\beta},\beta)}
     {(d/2,\alpha/2),\:(1,1),\:(1,\alpha/2)},
  \end{align}
  \begin{align} \label{E:Yab}
    \begin{aligned}
      Y(t,x) =  \pi^{-d/2} |x|^{-d}t^{\beta+\gamma-1}
       \FoxH{2,1}{2,3}{\frac{ |x|^\alpha}{2^{\alpha-1}\nu t^\beta}}
       {(1,1),\:(\beta+\gamma,\beta)}{(d/2,\alpha/2),\:(1,1),\:(1,\alpha/2)},
    \end{aligned}
  \end{align}
  and, if $\beta\in (1,2)$,
  \begin{align} \label{E:Z*ab}
    Z^{*}(t,x)=
    \pi^{-d/2} |x|^{-d}
    \FoxH{2,1}{2,3}{\frac{ |x|^\alpha}{2^{\alpha-1}\nu t^\beta}}{(1,1),\:(1,\beta)}
    {(d/2,\alpha/2),\:(1,1),\:(1,\alpha/2)}.
  \end{align}
\end{theorem}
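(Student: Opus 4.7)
The plan is to apply the spatial Fourier transform to \eqref{E:PDE} and reduce the problem to a family of time-fractional ODEs indexed by $\xi\in\R^d$, solve each such ODE explicitly via the Caputo-calculus representation \eqref{y1}, and then invert the Fourier transform to read off $J_0(t,x)$ and the kernel $Y(t,x)$. Finally, for $\beta\in(0,2)$, the explicit Fox H-function representations are obtained by expressing the Mittag-Leffler symbols through Mellin-Barnes integrals and carrying out the radial Fourier inversion.

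First, since the Fourier symbol of the fractional Laplacian is $|\xi|^\alpha$ for every $\alpha>0$, taking $\mathcal{F}$ in the spatial variable converts \eqref{E:PDE} into
\begin{equation*}
  \partial_t^\beta \widehat{u}(t,\xi) + \tfrac{1}{2}\nu |\xi|^\alpha \widehat{u}(t,\xi)
  = I_t^\gamma \widehat{f}(t,\xi), \qquad
  \partial_t^k \widehat{u}(0,\xi) = \widehat{u}_k(\xi), \quad 0\le k\le \Ceil{\beta}-1.
\end{equation*}
Recognizing $\partial_t^\beta$ as the Caputo derivative, I invoke the solution formula \eqref{y1} with $\lambda = -\tfrac{1}{2}\nu|\xi|^\alpha$ and inhomogeneous term $y(t) = I_t^\gamma \widehat{f}(t,\xi)$ to obtain
\begin{equation*}
  \widehat{u}(t,\xi)
  = \sum_{k=0}^{\Ceil{\beta}-1} \widehat{u}_k(\xi)\, t^k E_{\beta,k+1}\bigl(-\tfrac{1}{2}\nu |\xi|^\alpha t^\beta\bigr)
  + \int_0^t (t-s)^{\beta-1} E_{\beta,\beta}\bigl(-\tfrac{1}{2}\nu |\xi|^\alpha (t-s)^\beta\bigr) \bigl(I_s^\gamma \widehat{f}(\cdot,\xi)\bigr)(s)\, ds.
\end{equation*}
This immediately identifies the Fourier symbols \eqref{E:FZ} and \eqref{E:FZ*}, and moving the $I_s^\gamma$ onto the kernel via the semigroup identity (Lemma \ref{L:ItDt}, applied to the Mittag-Leffler series term-by-term) converts the Duhamel integrand into $(t-s)^{\beta+\gamma-1} E_{\beta,\beta+\gamma}(-\tfrac{1}{2}\nu|\xi|^\alpha (t-s)^\beta)$, yielding \eqref{E:FY} and exhibiting the representation \eqref{E:Duhamel} with $Y = \DRL{0+}{\Ceil{\beta}-\beta-\gamma} Z$ via Lemma \ref{L:I-D}.

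The hard part will be the inverse Fourier transform that produces the Fox H-function formulas \eqref{E:Zab}--\eqref{E:Z*ab}. My plan is to write each Mittag-Leffler symbol as a Mellin-Barnes integral,
\begin{equation*}
  t^{\beta+\gamma-1} E_{\beta,\beta+\gamma}\bigl(-\tfrac{1}{2}\nu|\xi|^\alpha t^\beta\bigr)
  = \frac{t^{\beta+\gamma-1}}{2\pi i}\int_{\mathcal{L}} \frac{\Gamma(s)\,\Gamma(1-s)}{\Gamma(\beta+\gamma-\beta s)} \bigl(\tfrac{1}{2}\nu |\xi|^\alpha t^\beta\bigr)^{-s} ds,
\end{equation*}
on a suitable vertical contour $\mathcal{L}$, and then exchange this Mellin integral with the $d$-dimensional inverse Fourier integral. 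The radial inversion $\mathcal{F}^{-1}\bigl(|\xi|^{-\alpha s}\bigr)(x)$ contributes a Gamma ratio $2^{d-\alpha s}\pi^{d/2}\Gamma(d/2-\alpha s/2)/\Gamma(\alpha s/2)$ after the usual reduction to a radial integral, and collecting the resulting product of four Gamma factors reconstructs the Mellin-Barnes definition of the Fox $H^{2,1}_{2,3}$ function with the parameters displayed in \eqref{E:Yab}. The main subtlety here is twofold: (i) for $\alpha>2$ the oscillatory behavior of $\mathcal{F}^{-1}|\xi|^{-\alpha s}$ forces one to justify the interchange of integrals in the sense of distributions, which I would do by working first on a dense class of test initial data (Schwartz class in $\xi$) and passing to the limit using the decay of the Fox H-function; (ii) at $\beta=2$ the Mittag-Leffler function $E_{2,b}$ develops oscillatory asymptotics rather than algebraic decay (see Lemma \ref{L:ML-Asymp}), so the contour $\mathcal{L}$ no longer closes to give a convergent Mellin-Barnes series, explaining why \eqref{E:Zab}--\eqref{E:Z*ab} are stated only for $\beta\in(0,2)$; the Fourier representations \eqref{E:FZ}--\eqref{E:FZ*}, however, remain valid at $\beta=2$ as distributional identities. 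Once these contour-shifting issues are handled, the formulas \eqref{E:Zab}--\eqref{E:Z*ab} follow, and formula \eqref{E:J0} for $J_0$ is read off by grouping the $k$-th term of the finite sum, using $\partial_t^{\Ceil{\beta}-1-k}$ to relate $t^k E_{\beta,k+1}$ to $t^{\Ceil{\beta}-1}E_{\beta,\Ceil{\beta}}$ via the identity $\partial_t t^b E_{\beta,b+1}(z t^\beta) = t^{b-1} E_{\beta,b}(z t^\beta)$.
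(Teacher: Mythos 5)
Your proposal is correct and follows essentially the same route as the paper: spatial Fourier transform, reduction to a family of time-fractional ODEs solved by Mittag-Leffler functions, and Fourier inversion of the symbols via Fox H-functions. Two execution details differ. Where you invoke the Caputo solution formula \eqref{y1} and then move $I_t^\gamma$ onto the kernel by the semigroup property of fractional integrals, the paper instead takes the Laplace transform in time and reads off the symbol $s^{-\gamma}\left(s^\beta+\tfrac{\nu}{2}|\xi|^\alpha\right)^{-1}$ directly; these are the same computation packaged differently (note only that \eqref{y1} is stated for non-integer order, so the endpoint $\beta=2$ is cleaner via the Laplace route or the classical variation-of-constants formula, though \eqref{E:ML-ex} shows the Mittag-Leffler expressions specialize correctly). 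More substantively, for the explicit formulas \eqref{E:Zab}--\eqref{E:Z*ab} the paper does not redo the Mellin--Barnes/Hankel inversion you outline: it cites Lemma 4.2 of \cite{chen.hu.ea:17:space-time} and simply observes that the hypotheses of Corollary 2.5.1 of \cite{kilbas.saigo:04:h-transforms} --- which control precisely the convergence issues you flag for $\alpha>2$ --- require only $\alpha>0$ and $\beta\in(0,2)$, so the extension beyond $\alpha\le 2$ is a matter of re-checking conditions rather than re-deriving the contour computation. Your plan would work but effectively re-proves that cited lemma; your diagnosis of why $\beta=2$ must be excluded from the explicit H-function formulas while the Fourier identities \eqref{E:FZ}--\eqref{E:FZ*} survive is consistent with the paper's treatment.
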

\begin{proof}
  The proof follows a standard argument using the Fourier and Laplace transforms in the space and
  time variables, respectively, which are denoted by $\widehat{f}$ and $\widetilde{g}$. Let us apply
  the Fourier transform to \eqref{E:PDE} first to obtain
  \[
  \begin{cases}
  \displaystyle \partial_t^\beta \widehat{u}(t,\xi)+\frac{\nu}{2}|\xi|^\alpha
  \widehat{u}(t,\xi)=I_t^\gamma \left[\widehat{f}(t,\xi)\right]\;,& \xi\in\R^d\\[0.5em]
  \displaystyle
  \left.\frac{\partial^k}{\partial t^k} \widehat{u}(t,\xi)\right|_{t=0} = \widehat{u}_k(\xi)\;,
  &\text{$0\le k\le \Ceil{\beta}-1$, $\xi\in\R^d$\:.}
  \end{cases}
  \]
  Apply the Laplace transform on the Caputo derivative using \cite[Theorem 7.1 on p.
  134]{diethelm:10:analysis}:
  \[
    \cL\left[\partial_t^\beta\: \widehat{u}(t,\xi)\right](s) = s^\beta \;\widetilde{\widehat{u}}(s,\xi) -
    \sum_{k=0}^{\Ceil{\beta}-1} s^{\beta-1-k}\; \widehat{u}_k(\xi).
  \]
  On the other hand, it is known that (see, e.g., \cite[(7.14) on p.
  140]{samko.kilbas.ea:93:fractional}),
  \[
    \cL I_t^\gamma\left[\widehat{f}(t,\xi)\right] = s^{-\gamma} \widetilde{\widehat{f}}(s,\xi),\quad \Re(\gamma)>0.
  \]
  Thus,
  \[
    \widetilde{\widehat{u}}(s,\xi)
    = \left(s^\beta +\frac\nu2 \:|\xi|^\alpha \right)^{-1}
      \left[\sum_{k=0}^{\Ceil{\beta}-1} s^{\beta-1-k}\; \widehat{u}_k(\xi)
            +s^{-\gamma} \widetilde{\widehat{f}}(s,\xi)
      \right].
  \]
  Notice that (see, e.g., \cite[(1.80) on p. 21]{podlubny:99:fractional})
  \[
    \cL\left[t^{\beta-1}E_{\alpha,\beta}(-\lambda t^\alpha) \right](s) = \frac{s^{\alpha-\beta}}{s^\alpha+\lambda},
    \quad\text{for $\Re(s)>|\lambda|^{1/\alpha}$.}
  \]
  Hence,
  \[
    \widehat{u}(t,\xi)=\sum_{k=0}^{\Ceil{\beta}-1}t^k E_{\beta,k+1}\left(-\frac{\nu}{2}|\xi|^\alpha t^\beta\right)\widehat{u}_k(\xi)
      + \int_0^t \ud \tau \: \tau^{\beta+\gamma-1} E_{\beta,\beta+\gamma}\left(-\frac{\nu}{2}|\xi|^\alpha\tau^\beta\right)\widehat{f}(t-\tau,\xi).
  \]
  Now if we denote
  \begin{align} \label{E:Uab}
    U(t,\xi):=t^{\Ceil{\beta}-1} E_{\beta,\Ceil{\beta}}\left(-\frac{\nu}{2}|\xi|^\alpha t^\beta\right),
  \end{align}
  using the fact that $\lMr{t}{D}{0+}^\gamma = \frac{\ud^\gamma}{\ud t^\gamma}$ when $\gamma\in
  \mathbb{Z}$ and for all $\gamma \in \R$ (see \cite[(1.82) on p. 21]{podlubny:99:fractional})
  \[
    \lMr{t}{D}{0+}^\gamma \left(t^{\beta-1}E_{\alpha,\beta}(\lambda t^\alpha)\right) = t^{\beta-\gamma-1}
    E_{\alpha,\beta-\gamma}(\lambda t^\alpha),
  \]
  we see that
  \[
    \widehat{u}(t,\xi)=\sum_{k=0}^{\Ceil{\beta}-1} \left(\frac{\ud^k}{\ud t^k} U(t,\xi) \right) \widehat{u}_{\Ceil{\beta}-1-k}(\xi)
    + \int_0^t \ud \tau \: \left(\lMr{t}{D}{0+}^{\Ceil{\beta}-\beta-\gamma} U(\tau,\xi)\right)\widehat{f}(t-\tau,\xi).
  \]

  It remains to prove the expressions in \eqref{E:Zab} -- \eqref{E:Z*ab} under the assumption that
  $\beta\in (0,2)$. A key observation is that for $Z_{\alpha,\beta,d}(t,x)$ defined in
  \eqref{E:Zab}, its Fourier transform is given by $U(t,\xi)$ in \eqref{E:Uab}, namely,
  \begin{align} \label{E:FU}
    \mathcal{F} Z_{\alpha,\beta,d}(t,\cdot) (\xi) = U(t,\xi), \quad
    \text{for all $\alpha>0$,  $\beta\in(0,2)$, and  $d\ge 1$.}
  \end{align}
  Indeed, \eqref{E:FU} is proved in Lemma 4.2 of \cite{chen.hu.ea:17:space-time}, but only for the
  case of $\alpha\in (0,2]$. Here we claim that the restriction of $\alpha\in (0,2]$ is not
  necessary. In the proof of this lemma, one needs to consider two cases separately: $d=1$ and $d\ge
  2$. In the case of $d=1$, the conditions we need are
  \begin{align*}
    \frac{2-\beta}{\alpha} >0 \quad \text{and} \quad 1\wedge \alpha >0.
  \end{align*}
  For the second case -- $d\ge 2$, the proof is a direct application of Corollary 2.5.1 of
  \cite{kilbas.saigo:04:h-transforms}, where one needs to verify the following conditions:

  \begin{center}
    \renewcommand{\arraystretch}{1.2}
    \begin{tabular}{|c|c|} \hline
      Conditions in \cite{kilbas.saigo:04:h-transforms} & the corresponding conditions in our setting \\ \hline
      $a^*>0$                                           & $2-\beta>0$                                 \\
      (2.6.8)                                           & $\min(\alpha,d)>0$                          \\
      (2.6.9)                                           & $d>1$                                       \\
      (2.6.10)                                          & $d>1$                                       \\ \hline
    \end{tabular}
  \end{center}

  Apparently, the above two conditions hold for all $\alpha>0$ and $\beta\in (0,2)$. Hence, Lemma
  4.2 of \cite{chen.hu.ea:17:space-time} is true for all $\alpha>0$ and $\beta\in (0,2)$. This
  proves both \eqref{E:FZ} and \eqref{E:Zab}. Once one obtains the expressions for
  $Z_{\alpha,\beta,d}(t,x)$ and $\mathcal F Z_{\alpha,\beta,d}(t,\cdot)(\xi)$, it is routine to
  obtain the corresponding expressions of their fractional or integer derivatives/integrals; see
  \cite{chen.hu.ea:17:space-time} for more details. This completes the proof of Theorem \ref{T:PDE}.
\end{proof}
\begin{remark}
  For the case $\beta=2$, the expression in \eqref{E:FZ} can be simplified using the fourth
  expression in \eqref{E:ML-ex}.
\end{remark}

\bigskip

\paragraph{\textbf{Acknowledgment.}} The authors wish to thank Xiong Wang for helpful discussions.   J. Song is partially supported by Shandong University grant 11140089963041 and National Natural Science Foundation of China grant 12071256.

\bigskip

\printbibliography[title={References}]
\end{document}